\begin{document}

\newcommand{\V}{{\mathcal V}}     
\renewcommand{\O}{{\mathcal O}}
\newcommand{\LL}{\mathcal L}
\newcommand{\mcO}{\mathcal{O}}
\newcommand{\mcF}{\mathcal{F}}
\newcommand{\mcL}{\mathcal{L}}
\newcommand{\mcG}{\mathcal{G}}
\newcommand{\mcH}{\mathcal{H}}
\newcommand{\mcI}{\mathcal{I}}
\newcommand{\mcJ}{\mathcal{J}}
\newcommand{\mcK}{\mathcal{K}}

\newcommand{\scr}{\mathscr}

\newcommand{\Ext}{\hbox{\rm Ext}}
\newcommand{\Tor}{\hbox{\rm Tor}}
\newcommand{\Hom}{\hbox{Hom}}
\newcommand{\Spec}{\hbox{Spec }}
\newcommand{\Proj}{\hbox{Proj}\ }
\newcommand{\Mod}{\hbox{Mod}}
\newcommand{\GrMod}{\hbox{GrMod}}
\newcommand{\grmod}{\hbox{gr-mod}}
\newcommand{\Tors}{\hbox{Tors}}
\newcommand{\gr}{\hbox{gr}}
\newcommand{\tors}{\hbox{tors}}
\newcommand{\rank}{\hbox{rank}}
\newcommand{\End}{\hbox{{\rm End}}}
\newcommand{\Der}{\hbox{Der}}
\newcommand{\GKdim}{\hbox{GKdim}}
\newcommand{\gldim}{\hbox{gldim}}
\newcommand{\im}{\hbox{im}}
\renewcommand{\ker}{\hbox{ker}\ }
\newcommand{\Char}{\hbox{char}}
\newcommand{\colim}{\hbox{colim}}
\newcommand{\depth}{\hbox{depth}}
\def\bee{\begin{eqnarray}}
\def\eee{\end{eqnarray}}

\newcommand{\AGr}{\hbox{A-Gr}}
\newcommand{\lonto}{{\protect \longrightarrow\!\!\!\!\!\!\!\!\longrightarrow}}

\renewcommand{\c}{\cancel}
\newcommand{\fh}{\frak h}  
\newcommand{\fp}{\frak p}
\newcommand{\fq}{\frak q}
\newcommand{\fr}{\frak r}
\newcommand{\mf}{\mathfrak}
\newcommand{\m}{{\mu}}
\newcommand{\gl}{{\frak g}{\frak l}}
\newcommand{\ssl}{{\frak s}{\frak l}}
\newcommand{\tw}{{\rm tw}}

\newcommand{\ds}{\displaystyle}
\newcommand{\s}{\sigma}
\renewcommand{\l}{\lambda}
\renewcommand{\a}{\alpha}
\renewcommand{\b}{\beta}
\newcommand{\G}{\Gamma}
\newcommand{\U}{\Upsilon}
\newcommand{\g}{\gamma}
\newcommand{\z}{\zeta}
\newcommand{\e}{\epsilon}
\renewcommand{\d}{\delta}
\newcommand{\p}{\rho}
\renewcommand{\t}{\tau}
\newcommand{\n}{\eta}
\newcommand{\x}{\chi}
\newcommand{\w}{\omega}
\renewcommand{\i}{\iota}
\renewcommand{\v}{\nu}

\newcommand{\A}{{\mathbb A}}
\newcommand{\C}{{\mathbb C}}
\newcommand{\N}{{\mathbb N}}
\newcommand{\Z}{{\mathbb Z}}
\newcommand{\ZZ}{{\mathbb Z}}
\newcommand{\Q}{{\mathbb Q}}
\renewcommand{\k}{\mathbb K}

\newcommand{\E}{{\mathcal E}}
\newcommand{\K}{{\mathcal K}}
\renewcommand{\L}{{\mathcal L}}
\renewcommand{\S}{{\mathcal S}}
\newcommand{\T}{{\mathcal T}}

\newcommand{\scrA}{{\mathscr A}}
\newcommand{\scrB}{{\mathscr B}}
\newcommand{\scrP}{{\mathscr P}}
\newcommand{\scrS}{{\mathscr S}}
\newcommand{\scrT}{{\mathscr T}}

\newcommand{\Sch}{{\tt Sch}}

\newcommand{\mfZ}{{\mathbf Z}}
\newcommand{\mfP}{{P}}
\newcommand{\mbfB}{{\mathbf B}}
\newcommand{\mfG}{{\G}}

\newcommand{\GL}{{GL}}
\newcommand{\Sym}{{\mathbf{Sym}}}
\newcommand{\kVect}{{$\k$-Vect}}

\newcommand{\rowxy}{(x\ y)}
\newcommand{\colxy}{ \left({\begin{array}{c} x \\ y \end{array}}\right)}
\newcommand{\scolxy}{\left({\begin{smallmatrix} x \\ y
\end{smallmatrix}}\right)}

\renewcommand{\P}{{\mathbb P}}

\newcommand{\la}{\langle}
\newcommand{\ra}{\rangle}
\newcommand{\tensor}{\otimes}
\newcommand{\tsr}{\tensor}
\newcommand{\ol}{\overline}
\newcommand{\isom}{\cong}
\newcommand{\xto}{\xrightarrow}
\newcommand{\mc}{\mathcal}

\newtheorem{thm}{Theorem}[section]
\newtheorem{lemma}[thm]{Lemma}
\newtheorem{cor}[thm]{Corollary}
\newtheorem{prop}[thm]{Proposition}
\newtheorem{claim}[thm]{Claim}

\theoremstyle{definition}
\newtheorem{defn}[thm]{Definition}
\newtheorem{notn}[thm]{Notation}
\newtheorem{ex}[thm]{Example}
\newtheorem{rmk}[thm]{Remark}
\newtheorem{rmks}[thm]{Remarks}
\newtheorem{note}[thm]{Note}
\newtheorem{example}[thm]{Example}
\newtheorem{problem}[thm]{Problem}
\newtheorem{ques}[thm]{Question}
\newtheorem{conj}[thm]{Conjecture}
\newtheorem{ass}[thm]{Assumptions}
\newtheorem{const}[thm]{Construction}
\newtheorem{thingy}[thm]{}

\newcommand{\onto}{{\protect \rightarrow\!\!\!\!\!\rightarrow}}
\newcommand{\donto}{\put(0,-2){$|$}\put(-1.3,-12){$\downarrow$}{\put(-1.3,-14.5) 

{$\downarrow$}}}

\newcounter{letter}
\renewcommand{\theletter}{\rom{(}\alph{letter}\rom{)}}

\newenvironment{lcase}{\begin{list}{~~~~\theletter} {\usecounter{letter}
\setlength{\labelwidth4ex}{\leftmargin6ex}}}{\end{list}}

\newcounter{rnum}
\renewcommand{\thernum}{\rom{(}\roman{rnum}\rom{)}}

\newenvironment{lnum}{\begin{list}{~~~~\thernum}{\usecounter{rnum}
\setlength{\labelwidth4ex}{\leftmargin6ex}}}{\end{list}}

\thispagestyle{empty}

\title[Algebras associated to inverse systems of projective schemes]{Algebras associated to inverse systems of projective schemes}

\keywords{Artin-Schelter regular algebras, noncommutative algebraic geometry, Sklyanin algebras}

\author[  Conner, Goetz ]{ }

  \subjclass[2020]{16S38, 16W50}
\maketitle

\begin{center}

\vskip-.2in Andrew Conner \\
\bigskip

Department of Mathematics and Computer Science\\
Saint Mary's College of California\\
Moraga, CA 94575\\
\bigskip

 Peter Goetz \\
\bigskip

Department of Mathematics\\ Cal Poly Humboldt\\
Arcata, California  95521
\\ \ \\

\end{center}

\setcounter{page}{1}

\thispagestyle{empty}

\vspace{0.2in}

\begin{abstract}
Artin, Tate and Van den Bergh initiated the field of noncommutative projective algebraic geometry by fruitfully studying geometric data associated to noncommutative graded algebras. More specifically, given a field $\k$ and a graded $\k$-algebra $A$, they defined an inverse system of projective schemes $\U_A = \{{\Upsilon_d(A)}\}$. This system affords an algebra, $\mbfB(\U_A)$, built out of global sections, and a $\k$-algebra morphism $\t: A \to \mbfB(\U_A)$. We study and extend this construction. We define, for any natural number $n$, a category ${\tt PSys}^n$ of \emph{projective systems of schemes} and a contravariant functor $\mbfB$ from ${\tt PSys}^n$ to the category of associative $\k$-algebras. We realize the schemes ${\Upsilon_d(A)}$ as $\Proj {\mathbf U}_d(A)$, where ${\mathbf U}_d$ is a functor from associative algebras to commutative algebras. We characterize when the morphism $\t: A \to \mbfB(\U_A)$ is injective or surjective in terms of local cohomology modules of the ${\mathbf U}_d(A)$. Motivated by work of Walton, when $\U_A$ consists of well-behaved schemes, we prove a geometric result that computes the Hilbert series of $\mbfB(\U_A)$. We provide many detailed examples that illustrate our results. For example, we prove that for some non-AS-regular algebras constructed as twisted tensor products of polynomial rings, $\t$ is surjective or an isomorphism. 
\end{abstract}

\bigskip


\section{Introduction}
\label{introduction}

In their seminal paper \cite{ATVI} the authors introduced a new way to study graded algebras using algebraic geometry. The central focus of that work is the study of what are now called Artin-Schelter (AS) regular algebras of dimension 3. To each such algebra $A$, one can associate a geometric triple $(E, \s, \mcL)$, consisting of a scheme $E$, an automorphism $\s:E\to E$ and a base-point-free line bundle $\mcL$ on $E$. Likewise, given any such triple, one can define the \emph{twisted homogeneous coordinate ring} $B(E,\s,\mcL)$, which the authors show is either isomorphic to $A$, or is a quotient of $A$ by a normal, regular homogeneous element whose degree is larger than those of the defining relations of $A$. The ability to recover $A$ from $B(E,\s,\mcL)$ enabled the authors to classify, and to infer ring-theoretic properties of, three-dimensional AS-regular algebras. It is therefore natural to hope that geometric information can also be used to study AS-regular algebras of higher dimensions, and even graded algebras that are not AS-regular. 

To that end, in this paper we investigate a different geometric construction from \cite{ATVI} - one that can be applied to any connected, $\N$-graded $\k$-algebra, finitely generated in degree 1. Given a graded algebra $A$ we define projective schemes $\U_d(A)\subseteq (\P^n)^{\times d}$ that parametrize isomorphism classes of \emph{truncated point modules of length $d+1$} for $A$. We then define the structure of a graded ring on the space  $\mbfB(A)=\bigoplus_d H^0(\U_d(A), i^*\O_{(\P^n)^{\times d}}(1))$, and a canonical graded algebra morphism $\t_A: A \to \mbfB(A)$, which is bijective in degree 1. In the case when $A$ is a quadratic, AS-regular algebra of dimension 3, it can be shown that $\mbfB(A)\cong B(E,\s,\mcL)$,  and that $\t_A$ is surjective with $\ker \ \t_A$ generated in degree 3. However, in general, $\t_A$ need not be injective nor surjective; see Example 3 in Section \ref{examples}. 

The primary goal of this paper is to provide some partial answers to two related, natural questions:  
\begin{enumerate}
\item What conditions guarantee that  $A$ can be recovered from $\mbfB(A)$ via $\t_A$?
\item When is $\mbfB(A)$  amenable to study via geometry?
\end{enumerate}

In \cite{Walton1} (see also \cite{Walton2}) the author showed that if $S$ is a certain degenerate Sklyanin algebra on three generators, then $\t_S$ is an isomorphism in degrees $\le 4$. Since $S$ is quadratic in this context, $S$ is isomorphic to the quadratic closure of $\mbfB(S)$\footnote{In fact, Walton conjectures that $S\cong \mbfB(S)$.}. Techniques from algebraic geometry are used to obtain these results. 

To our knowledge, in the more than thirty years since \cite{ATVI} appeared, \cite{Walton1} and \cite{Walton2} are the only papers in the literature that attempt to analyze a ring of the form $\mbfB(A)$ when $A$ is not AS-regular. Thus our secondary objective is to create an access point to this subject for researchers in noncommutative algebra by detailing the geometric aspects quite thoroughly.

To state our main results, let $\k$ be an algebraically closed field. Let $A$ denote a connected, $\N$-graded $\k$-algebra, finitely generated in degree 1.
Towards question (1), we define a commutative graded $\k$-algebra ${\mathbf U}_d(A)$ such that $\U_d(A)\cong \Proj {\mathbf U}_d(A)$. When $A$ is commutative, ${\mathbf U}_d(A)$ is isomorphic to the $d$-th Veronese subalgebra of $A$ (Theorem \ref{functor on commutative}). Once this setup is established, our first result follows quickly.

\begin{thm}[Theorem \ref{kernel-image via local cohomology}]
\label{intro1} \ 
\begin{enumerate}
\item The map $\t_A$ is injective if and only if $H^0_{\mf m}({\mathbf U}_d(A))_1=0$ for all $d\ge 2$.
\item The map $\t_A$ is surjective if and only if $H^1_{\mf m}({\mathbf U}_d(A))_1=0$ for all $d\ge 2$.
\end{enumerate}
In particular, if $\depth\ {\mathbf U}_d(A)\ge 2$ for all $d\ge 2$, then $A\cong  \mbfB(A)$.
\end{thm}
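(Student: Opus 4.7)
The plan is to reduce the theorem, degree by degree, to the standard local cohomology exact sequence for a graded commutative ring, by identifying the degree $d$ component of $\t_A$ with the comparison map from ${\mathbf U}_d(A)_1$ to the global sections of the tautological line bundle on $\Proj{\mathbf U}_d(A)$.

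First, I will use the setup already developed in the paper: the isomorphism $\U_d(A)\cong \Proj{\mathbf U}_d(A)$ must be engineered so that the pullback line bundle $i^*\O_{(\P^n)^{\times d}}(1)$ on $\U_d(A)$ corresponds to the Serre twist $\O_{\Proj{\mathbf U}_d(A)}(1)$, and so that ${\mathbf U}_d(A)_1$ is naturally identified with $A_d$. (In the commutative case this is precisely the Veronese identification of Theorem \ref{functor on commutative}; in general it is built into the construction of the functor ${\mathbf U}_d$.) Under these identifications, the definition of $\t_A$ in degree $d$ - sending $a\in A_d$ to the global section it determines - coincides with the canonical comparison map $R_1\to H^0(\Proj R,\O(1))$ for $R={\mathbf U}_d(A)$. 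This is the main technical point to verify; I expect it to be the primary obstacle, since one must track the definition of $\t_A$ through the Segre-type embedding used in the construction of $\U_d(A)$ and confirm compatibility with the Proj description of ${\mathbf U}_d(A)$.

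Second, once this identification is in hand, I invoke the standard four-term exact sequence for a finitely generated graded commutative $\k$-algebra $R$ with irrelevant ideal $\mf m$,
\[
0 \longrightarrow H^0_{\mf m}(R) \longrightarrow R \longrightarrow \bigoplus_{n\in\Z} H^0\bigl(\Proj R,\O(n)\bigr) \longrightarrow H^1_{\mf m}(R) \longrightarrow 0,
\]
and extract its degree $1$ piece with $R={\mathbf U}_d(A)$:
\[
0 \to H^0_{\mf m}({\mathbf U}_d(A))_1 \to {\mathbf U}_d(A)_1 \to H^0(\Proj {\mathbf U}_d(A),\O(1)) \to H^1_{\mf m}({\mathbf U}_d(A))_1 \to 0.
\]
Via the identifications above, this reads as
\[
0 \to H^0_{\mf m}({\mathbf U}_d(A))_1 \to A_d \xrightarrow{(\t_A)_d} \mbfB(A)_d \to H^1_{\mf m}({\mathbf U}_d(A))_1 \to 0,
\]
so the kernel and cokernel of $(\t_A)_d$ are precisely $H^0_{\mf m}({\mathbf U}_d(A))_1$ and $H^1_{\mf m}({\mathbf U}_d(A))_1$.

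Finally, I assemble the global statement. Since $\t_A$ is bijective in degrees $0$ and $1$ by construction, $\t_A$ is injective (respectively surjective) if and only if $(\t_A)_d$ is injective (respectively surjective) for every $d\ge 2$, which by the preceding sequence is equivalent to the vanishing of $H^0_{\mf m}({\mathbf U}_d(A))_1$ (respectively $H^1_{\mf m}({\mathbf U}_d(A))_1$) for all $d\ge 2$. This establishes parts (1) and (2). The ``in particular'' clause then follows immediately: if $\depth\,{\mathbf U}_d(A)\ge 2$ for all $d\ge 2$, then $H^0_{\mf m}({\mathbf U}_d(A))=H^1_{\mf m}({\mathbf U}_d(A))=0$ by definition of depth, so both conditions hold simultaneously and $\t_A$ is a bijection, i.e.\ an isomorphism of graded $\k$-algebras.
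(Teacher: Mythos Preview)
Your proposal is correct and follows essentially the same approach as the paper: both reduce to the degree-$1$ piece of the four-term local cohomology sequence for ${\mathbf U}_d(A)$, after identifying $(\t_A)_d$ with the comparison map $\z({\mathbf U}_d(A))_1$ via the linear isomorphism $\widetilde{\i}:A_d\to{\mathbf U}_d(A)_1$. The paper establishes the compatibility you flag as the main technical point via a commutative diagram built from Lemma~\ref{naturality of four-term sequence}, Proposition~\ref{model for Gd}, and Proposition~\ref{map A to B}; one small quibble is that the depth conclusion uses the characterization $\depth\,{\mathbf U}_d(A)=\inf\{n:H^n_{\mf m}({\mathbf U}_d(A))\neq 0\}$, which is a theorem rather than the definition.
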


If the map $\t_A$ is not injective, the ability to recover $A$ from $\mbfB(A)$ depends on the degrees in which $\ker\ \t_A$ is nonzero. The result above provides one approach to understanding $\ker\ \t_A$. Another point of view is to examine annihilators of point modules.

\begin{thm}[Corollary \ref{kernel of tau}]
\label{intro2}
 For $d\ge 2$, let  
$$J_d=\{f\in A_d\ :\ Pf=0 \text{ for every truncated right point module $P$ of length $d+1$}\}.$$
Then $\ker\ (\t_A)_d\subseteq J_d$, with equality if $\U_d(A)$ is reduced.
\end{thm}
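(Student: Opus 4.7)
The plan is to interpret the section $\t_A(f) \in H^0(\U_d(A), i^*\O_{(\P^n)^{\times d}}(1))$ pointwise on $\U_d(A)$ via the geometric meaning of $\t_A$, and then to invoke the fact that on a reduced projective scheme over an algebraically closed field, a section of a line bundle is zero if and only if it vanishes at every closed point.

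The first step is to use the pointwise description of $\t_A(f)$ that is built into the construction of $\U_d(A)$ and $\t_A$ (and presumably established in the preceding result to which this is a corollary): a closed point $p = (q_0, \ldots, q_{d-1}) \in \U_d(A) \subseteq (\P^n)^{\times d}$ corresponds to an isomorphism class of truncated right point module $P$ of length $d+1$, where each $q_i \in \P(A_1^*)$ corresponds to a hyperplane $H_i \subseteq A_1$ and $P_{i+1}$ is identified with $A_1/H_i$ as a cyclic quotient acting on the generator of $P_i$. Consequently $P_d$ is canonically isomorphic to $\bigotimes_{i=0}^{d-1}(A_1/H_i)$, which in turn is canonically the fiber of $i^*\O_{(\P^n)^{\times d}}(1)$ at $p$. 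Under this identification, evaluation of $\t_A(f)$ at $p$ coincides with the action map $A_d \to P_d$, $f \mapsto p_0 \cdot f$, where $p_0$ generates $P_0$. Thus $\t_A(f)$ vanishes at $p$ if and only if $Pf = 0$.

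Granting this, the inclusion $\ker (\t_A)_d \subseteq J_d$ is immediate: if $\t_A(f) = 0$, then it vanishes at every closed point, so $Pf = 0$ for every truncated right point module $P$ of length $d+1$. Conversely, suppose $\U_d(A)$ is reduced and $f \in J_d$, so $\t_A(f)$ vanishes at every closed point of $\U_d(A)$. Covering $\U_d(A)$ by affine opens $\Spec R$ over which the line bundle $i^*\O_{(\P^n)^{\times d}}(1)$ trivializes, the restriction of $\t_A(f)$ becomes an element $r \in R$ lying in every maximal ideal of $R$. Since $R$ is a reduced, finitely generated algebra over the algebraically closed field $\k$, the Nullstellensatz identifies the Jacobson radical of $R$ with its nilradical $(0)$, so $r=0$. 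Gluing across the cover yields $\t_A(f) = 0$, i.e., $f \in \ker (\t_A)_d$.

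The only real obstacle is the pointwise description in the first step, matching evaluation of the section $\t_A(f)$ at $p$ with the action $p_0 \cdot f$. This is essentially the geometric content of the definition of $\t_A$ and should already be in place from earlier in the paper (indeed, the result is stated as a corollary, suggesting it falls out of a precise statement of this matching). Once that identification is available, the remaining content of the corollary is simply the Nullstellensatz observation on the reduced scheme $\U_d(A)$.
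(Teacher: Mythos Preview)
Your argument is correct and is essentially the geometric translation of the paper's proof. The paper routes through the commutative algebra of $\mathbf{U}_d(A)$: it identifies $\widetilde{\iota}(\ker(\tau_A)_d)$ with $H^0_{\mathfrak m}(\mathbf{U}_d(A))_1$ via the four-term local cohomology sequence (Theorem \ref{kernel-image via local cohomology}), identifies $\widetilde{\iota}(J_d)$ with ${\rm Nil}(\mathbf{U}_d(A))_1$ (Theorem \ref{annihilating every point module}), and then invokes the elementary inclusion $H^0_{\mathfrak m}\subseteq{\rm Nil}$, which is an equality when $\Proj \mathbf{U}_d(A)$ is reduced. Your proof is the same chain read geometrically: degree-1 elements in $H^0_{\mathfrak m}$ are exactly those whose associated section is zero, degree-1 elements in ${\rm Nil}$ are exactly those whose section vanishes at every closed point, and the Nullstellensatz on a reduced finite-type $\k$-scheme collapses the two. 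The pointwise identification you rely on is precisely Proposition \ref{annihilator-prime-A} ($f$ annihilates the point module at $\mathfrak q$ iff $\widetilde{\iota}(f)\in\mathfrak q$), together with the standard fact that a degree-1 element lies in a relevant homogeneous prime iff the corresponding section of $\mathcal{O}(1)$ vanishes there. Your route has the virtue of bypassing local cohomology and the $\mathbf{U}_d$ machinery; the paper's phrasing, conversely, ties the corollary directly into the framework of Theorem \ref{kernel-image via local cohomology}.
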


In fact, we prove that $J_d$ can be identified with the degree-1 part of the graded nilradical ${\rm Nil}({\mathbf U}_d(A))$. The containment in the theorem can be strict when $\U_d(A)$ is not reduced, see Example \ref{Ore case 4}. 

In \cite[Section 4.1]{Walton1}, Walton uses the normalization of $\U_d(S)$ to compute the dimension of $\mbfB(S)_d$ where $S$ is a degenerate Sklyanin algebra. This motivated our contribution to question (2). The following special case of Theorem \ref{main hs theorem} illustrates how the theorem can be used in practice; see also Example 1 in Section \ref{examples}. 

\begin{thm}\label{intro3}
Let $X=\U_d(A)$. Assume $X$ is reduced, with discrete singular locus ${\rm Sing}(X)$. Let $(X',\nu)=(\coprod W'_i,\coprod f_i)$ be the normalization of $X$, where the $f_i:W_i'\to X$ are closed immersions of normal schemes.
If $|f_i(W'_i)\cap {\rm Sing}(X)|\le 2$, then
$$\dim_{\k} \mbfB(A)_d = \sum_i h^0(\O_{W'_i}(1)) - |{\rm Sing}(X)|.$$
\end{thm}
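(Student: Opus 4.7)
The plan is to exploit the normalization short exact sequence for the reduced scheme $X$. Let $\mathcal{L} := i^*\mathcal{O}_{(\P^n)^{\times d}}(1)$, so that $\mbfB(A)_d \cong H^0(X, \mathcal{L})$. Writing $\nu : X' \to X$ for the normalization, there is an exact sequence
\[
0 \to \mathcal{O}_X \to \nu_*\mathcal{O}_{X'} \to \mathcal{C} \to 0,
\]
where the conductor $\mathcal{C}$ is a skyscraper sheaf supported on the finite set ${\rm Sing}(X)$. Since $\mathcal{L}$ is invertible, tensoring preserves exactness; by the projection formula $\nu_*\mathcal{O}_{X'} \otimes \mathcal{L} \cong \nu_*(\nu^*\mathcal{L})$, and because $\mathcal{C}$ is a skyscraper at $\k$-rational points, $\mathcal{C} \otimes \mathcal{L} \cong \mathcal{C}$.

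Next I would take the long exact sequence in cohomology. Using the decomposition $X' = \coprod W'_i$ with $\nu^*\mathcal{L}|_{W'_i} = \mathcal{O}_{W'_i}(1)$, this yields
\[
\dim_{\k} \mbfB(A)_d = \sum_i h^0(\mathcal{O}_{W'_i}(1)) - \dim_{\k} \im \phi,
\]
where $\phi : \bigoplus_i H^0(W'_i, \mathcal{O}_{W'_i}(1)) \to H^0(X, \mathcal{C})$ is the induced map. Thus the theorem reduces to the equality $\dim_{\k} \im \phi = |{\rm Sing}(X)|$.

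The third step is a local analysis at each $p \in {\rm Sing}(X)$. Because each $W'_i$ is normal and $f_i$ is a closed immersion, every irreducible component of $X$ is already normal, so $p$ must be a point where several components $f_i(W'_i)$ meet. In the geometric setting arising from $\U_d(A) \subseteq (\P^n)^{\times d}$, I expect the hypothesis $|f_i(W'_i) \cap {\rm Sing}(X)| \le 2$ to force each singular point to be a node: exactly two branches meeting transversally, so $\mathcal{C}_p \cong \k$ and $\dim_{\k} H^0(X, \mathcal{C}) = |{\rm Sing}(X)|$. For the surjectivity of $\phi$, the stalk of $\phi$ at a node $p$ with preimages $q_i \in W'_i$ and $q_j \in W'_j$ is (up to local trivialization of $\mathcal{L}$) the difference map $(s_i, s_j) \mapsto s_j(q_j) - s_i(q_i)$. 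One can arrange sections to realize any such difference globally provided $|\mathcal{O}_{W'_i}(1)|$ separates the (at most two) singular points lying on $W'_i$, which is the geometric content of the hypothesis.

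The main obstacle is this last local-to-global step: showing that the numerical condition $|f_i(W'_i) \cap {\rm Sing}(X)| \le 2$ is sufficient both to rule out higher-order singularities (triple points, tangential intersections) and to force the linear system $|\mathcal{O}_{W'_i}(1)|$ to separate the at-most-two marked points on each component. This will presumably rely on the very-ample nature of the tautological bundle on $(\P^n)^{\times d}$ together with the specific form of the schemes $\U_d(A)$.
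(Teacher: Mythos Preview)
Your approach---the normalization short exact sequence, tensoring with $\mathcal{L}$, the projection formula, the long exact sequence reducing everything to surjectivity of $\phi$ and the size of the stalks $\mathcal{C}_p$---is exactly the paper's approach in Section~\ref{Hilbert series}. Your surjectivity argument is also the paper's: the condition $|f_i(W'_i)\cap{\rm Sing}(X)|\le 2$ is used precisely because a very ample line bundle separates any two points, so one can find a section of $\mathcal{O}_{W'_i}(1)$ vanishing at all singular preimages on $W'_i$ except one. The paper packages this as the notion that $\nu$ \emph{separates singularities} (Definition~\ref{separable singularities}) and notes immediately afterward that the numerical bound $\le 2$ suffices.

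Your instinct about the gap is correct, and the paper does not close it the way you guess. The bound $|f_i(W'_i)\cap{\rm Sing}(X)|\le 2$ does \emph{not} force each singularity to be a node: three components through one point, or two components meeting tangentially, are compatible with the hypothesis and give $\dim_\k\mathcal{C}_p>1$. The paper does not try to rule these out. Instead, the precise result in the body (Theorem~\ref{main hs theorem}) carries $\dim_\k\mathcal{Q}_p=1$ as a \emph{separate, explicit} hypothesis alongside the separation condition. Theorem~\ref{intro3} is described in the introduction as a ``special case'' illustrating how the main theorem is used; the stalk hypothesis is suppressed there and verified by hand in the applications (e.g.\ Lemma~\ref{stalks of skyscraper}). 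So the obstacle you identified is resolved by assumption, not by deduction from the numerical bound---you should not expect to prove $\mathcal{C}_p\cong\k$ from the stated hypotheses alone.
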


The results of this paper apply to graded algebras in general, but we were initially motivated by \cite{CG3} and \cite{CG1}, wherein we classified quadratic twisted tensor products of the form $A=\k[x,y]\tsr_{\t}\k[z]$ up to  isomorphism and determined which are AS-regular. Any such algebra $A$ has the same Hilbert series as a quadratic 3-dimensional AS-regular algebra, so we wondered if $\t_A$ is always surjective when $A=\k[x,y]\tsr_{\t}\k[z]$ is quadratic. Examples 1 and 2 in Section \ref{examples} illustrate that $\t_A$ is surjective for two of the non-AS-regular algebras from \cite{CG1}.


The paper is organized as follows. In Section 2 we provide definitions and background results needed to define $\U_d(A)$ and $\mbfB(A)$. In Section 3 we describe the algebra ${\mathbf U}_d(A)$ in two different ways, show that ${\mathbf U}_d(A)$ is the $d$-th Veronese subalgebra of $A$ when $A$ is commutative, and
prove that $\U_d(A)\cong \Proj {\mathbf U}_d(A).$ 
Section 4 concerns the morphism $\t_A: A \to \mbfB(A)$. We describe when it is surjective, and characterize its kernel in terms of annihilators of point modules. In Section 5 we prove a purely geometric result computing the dimension of the space of global sections of $\mc{O}_X(1)$ for certain well-behaved schemes $X$; Theorem \ref{intro3} is an immediate consequence. Finally in Section 6 we include three examples illustrating our results. 

\section{Preliminaries}
\label{preliminaries}

In this paper we work with vector spaces, algebras, and schemes over a fixed algebraically closed field $\k$. 
Let ${\tt Sch}$ denote the category of $\k$-schemes. When referring to a \emph{scheme}, we always mean an object of ${\tt Sch}$.  Furthermore, all fiber products of schemes are taken with respect to ${\rm Spec}\ \k$. 

By \emph{vector space} we always mean an object of ${\tt Vect}$, the category of $\k$-vector spaces. We denote the tensor product of vector spaces $V$ and $W$ by $V\tsr W$. When $V$ and $W$ are $\N$-graded, 
the space $V\tsr W$ admits an $\N$-grading via the formula
$$(V\tsr W)_m=\bigoplus_{k+\ell=m} V_k\tsr W_{\ell}.$$ We write $V^*$ for the linear dual of the vector space $V$, and require morphisms of graded vector spaces to preserve degrees. 

Whenever we refer to a \emph{graded algebra}, we always mean a connected, $\N$-graded, locally finite-dimensional $\k$-algebra. Let ${\tt Alg}$ denote the category of graded algebras with degree-preserving algebra morphisms. 

Objects of ${\tt Alg}$ need not be generated in degree 1. Let ${\tt Alg}^1$ denote the full subcategory of ${\tt Alg}$ consisting of all 1-generated algebras. If $A\in {\tt Alg}^1$, then there is a canonical surjective morphism of 1-generated graded algebras $\pi_A:{\mathbf T}(A_1)\to A$, where ${\mathbf T}(A_1)$ denotes the tensor algebra, graded by tensor degree. The tensor algebra construction is functorial, so for any morphism $f:A\to B$ in ${\tt Alg}^1$ there is a map ${\mathbf T}(f_1):{\mathbf T}(A_1)\to {\mathbf T}(B_1)$. Moreover, it is clear that ${\mathbf T}(f_1)(\ker \, \pi_A)\subset \ker \, \pi_B$.

We refer to a graded algebra $A\in {\tt Alg}^1$ as \emph{quadratic} if the ideal $\ker \,\pi_A\subset {\mathbf T}(A_1)$ is generated in degree 2.

If $A$ is a graded algebra, we denote by ${\tt Mod}(A)$ the category of $\Z$-graded right $A$-modules, with degree-preserving morphisms. For $M\in {\tt Mod}(A)$ and $d\in \Z$, we denote by $M(d)$ the degree-shifted graded right $A$-module where $M(d)_n=M_{n+d}$ for all $n\in\Z$.
~\\

\subsection{The scheme of zeros and local cohomology.}~\\

Let $X$ be a scheme, $\mcF$ a locally free $\O_X$-module, and $s_1,\ldots, s_n\in H^0(X,\mcF)$ global sections, viewed as morphisms $s_i:\O_X\to \mcF$. Define $\mcI_{s_i}=\im s_i^{\vee}$ to be the image of the dual morphism $s_i^{\vee}:\mcF^{\vee}\to \O_X^{\vee}\cong \O_X$. Since $\mcF$ is locally free, each $\mcI_{s_i}$ is a quasicoherent sheaf of ideals, hence so is $\mcI=\sum_{i=1}^n \mcI_{s_i}$. The \emph{scheme of zeros} of $s_1,\ldots, s_n$ is the closed subscheme $Z\subset X$ determined by $\mcI$, or equivalently, the scheme-theoretic intersection of the closed subschemes determined by the $\mcI_{s_i}$. As a subspace of $X$, $Z={\rm Supp}(\O_X/\mcI)$, and the structure sheaf on $Z$ is the unique (up to isomorphism) sheaf $\O_Z$ such that $i_*\O_Z=\O_X/\mcI$, where $i: Z \to X$ is the inclusion map. 

Here we collect a few general facts concerning schemes of zeros that will be needed later. 



\begin{lemma} 
\label{pullback of image inclusion is injective}
Let $f: X \to Y$ be a morphism of ringed spaces with associated morphism $f^{\sharp}: f^{-1} \mcO_Y \to \mcO_X$ .  Let $\varphi: \mcG \to \mcH$ be a morphism of $\mcO_Y$-modules. Let $\mcI = \im(\varphi)$ and let $\eta: \mcI \to \mcH$ be the canonical inclusion. Then $f^{*}\eta: f^* \mcI \to f^{*} \mcH$ is injective.
\end{lemma}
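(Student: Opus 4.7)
The plan is to exploit the factorization $\varphi = \eta \circ \pi$, where $\pi\colon \mcG \to \mcI$ is the canonical surjection of $\mcG$ onto its image, and then to apply the pullback functor $f^*$. Since $f^*$ is a left adjoint and therefore right-exact, the morphism $f^*\pi$ is surjective, and functoriality gives $f^*\varphi = (f^*\eta) \circ (f^*\pi)$. Consequently, $\im(f^*\eta)$ coincides with $\im(f^*\varphi)$ as subsheaves of $f^*\mcH$. The problem then reduces to showing that the natural surjection $f^*\mcI \to \im(f^*\varphi)$ induced by $f^*\eta$ has trivial kernel.

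To verify injectivity, I would pass to stalks. For $x \in X$ with $y = f(x)$, the canonical identifications
$$
(f^*\mcI)_x \cong \mcI_y \otimes_{\mcO_{Y,y}} \mcO_{X,x} \quad \text{and} \quad (f^*\mcH)_x \cong \mcH_y \otimes_{\mcO_{Y,y}} \mcO_{X,x}
$$
translate the stalk of $f^*\eta$ into $\eta_y \otimes \mathrm{id}$, where $\eta_y$ is the inclusion of the submodule $\mcI_y = \im(\varphi_y)$ into $\mcH_y$. This localizes the question to one in commutative algebra about a specific module inclusion tensored with a ring.

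The main obstacle will be that tensor product over a ring is only right-exact, so the injectivity of $\eta_y$ does not automatically yield that of $\eta_y \otimes \mathrm{id}$. The argument must therefore leverage the specific description of $\mcI_y$ as the image $\varphi_y(\mcG_y)$ rather than treating $\eta_y$ as an abstract monomorphism. The strategy I would pursue is to represent an arbitrary element of $(f^*\mcI)_x$ as a sum of simple tensors of the form $\varphi_y(g) \otimes r$, using the stalk-level surjectivity of $f^*\pi$, and to track when such a sum vanishes in $(f^*\mcH)_x$, thereby identifying $(f^*\mcI)_x$ concretely with $\im((f^*\varphi)_x)$. Once this identification is made, the stalk of $f^*\eta$ becomes the inclusion of a submodule into $(f^*\mcH)_x$ by construction, and injectivity follows. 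The technical heart of the proof lies in making this local bookkeeping precise.
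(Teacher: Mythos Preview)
Your outline mirrors the paper's proof almost exactly: both reduce to stalks, identify $(f^*\eta)_x$ with $\eta_y \otimes \mathrm{id}_{\mcO_{X,x}}$ for $y = f(x)$, and then aim to show that $\mcI_y \otimes_{\mcO_{Y,y}} \mcO_{X,x}$ maps isomorphically onto $\im(\varphi_y \otimes \mathrm{id})$ inside $\mcH_y \otimes_{\mcO_{Y,y}} \mcO_{X,x}$. The paper invokes the restriction/induction-of-scalars adjunction for this last step; you call it ``local bookkeeping'' and defer it as the technical heart to be filled in.

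This is exactly where the argument cannot be completed in the stated generality, and the lemma as written is false. The assertion that $I \otimes_A B \to H \otimes_A B$ is injective whenever $I = \im(\varphi\colon G \to H)$ fails already for $A = \mathbb{Z}$, $B = \mathbb{Z}/2\mathbb{Z}$, $G = H = \mathbb{Z}$, and $\varphi$ multiplication by $2$: then $I = 2\mathbb{Z}$, and $I \otimes_A B \cong \mathbb{Z}/2\mathbb{Z}$ is generated by $2 \otimes 1$, which maps to $2 \otimes 1 = 1 \otimes 2 = 0$ in $H \otimes_A B$, so $\eta \otimes \mathrm{id}$ is the zero map on a nonzero module. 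Interpreting this via one-point ringed spaces produces a counterexample to the lemma itself. Right-exactness of $-\otimes_A B$ guarantees only that $I \otimes_A B$ \emph{surjects} onto $\im(\varphi \otimes \mathrm{id})$; the kernel is governed by $\mathrm{Tor}_1^A(H/I, B)$, which has no reason to vanish here. Your instinct that the difficulty concentrates at this point is correct, but no rearrangement of simple tensors will close the gap without an extra hypothesis such as flatness of $\mcO_{X,x}$ over $\mcO_{Y,f(x)}$ or of $\mcH/\mcI$.
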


\begin{proof}
Let $x \in X$. It suffices to check that the map $(f^* \eta)_x: (f^* \mcI)_x \to (f^* \mcH)_x$ is injective. Making the usual natural identifications of stalks at $x$, it suffices to prove that the map $$\eta_{f(x)} \tsr {\rm id}_{\mcO_{X, x}} : \mcI_{f(x)} \tsr_{\mcO_{Y, f(x)}} \mcO_{X, x} \to \mcH_{f(x)} \tsr_{\mcO_{Y, f(x)}} \mcO_{X, x}$$ is injective.


It follows from the restriction/induction of scalars adjunction that the map $$\eta_{f(x)} \tsr {\rm id}_{\mcO_{X, x}} : \mcI_{f(x)} \tsr_{\mcO_{Y, f(x)}} \mcO_{X, x} \to \im(\varphi_{f(x)} \tsr {\rm id}_{\mcO_{X, x}})$$ is an isomorphism. Since $\im(\varphi_{f(x)} \tsr {\rm id}_{\mcO_{X, x}})$ includes into $\mcH_{f(x)} \tsr_{\mcO_{Y, f(x)}} \mcO_{X, x}$, we conclude that $(f^* \eta)_x$ is injective. 

\end{proof}

\begin{lemma}
\label{induced morphism on scheme of zeros}
Let $f:X\to Y$ be a morphism of schemes and $\mcF$ a locally free $\O_Y$-module of finite rank. Let $s_1,\ldots, s_n\in H^0(Y,\mcF)$ and let $s'_i=f^*(s_i)\in H^0(X,f^*\mcF)$ for $1\le i\le n$. Let $Z$, respectively $Z'$, be the scheme of zeros of $s_1,\ldots, s_n$, respectively of $s'_1,\ldots, s'_n$. Let $\iota:Z\to Y$ and $\iota':Z'\to X$ be the corresponding inclusions. Then $f\circ \iota'$ factors through $\iota$.

\end{lemma}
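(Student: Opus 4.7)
The plan is to apply the universal property of closed immersions: a morphism $g\colon W\to Y$ factors through a closed immersion $\iota\colon Z\hookrightarrow Y$ with ideal sheaf $\mcI$ if and only if $g^{-1}\mcI\cdot \mcO_W=0$. So the lemma reduces, with $g=f\circ \iota'$, to verifying $(f\circ \iota')^{-1}\mcI\cdot \mcO_{Z'}=0$, where $\mcI=\sum_i \mcI_{s_i}$ is the ideal sheaf cutting out $Z$ in $Y$.

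The main step is to establish the identity $\mcI'=f^{-1}\mcI\cdot \mcO_X$, where $\mcI'=\sum_i \mcI_{s'_i}$ is the ideal sheaf cutting out $Z'$ in $X$. For each $i$, factor the dual section $s_i^{\vee}\colon \mcF^{\vee}\to \mcO_Y$ as $\mcF^{\vee}\twoheadrightarrow \mcI_{s_i}\hookrightarrow \mcO_Y$. Applying the right-exact functor $f^*$ and using the fact that $f^*(s_i^{\vee})=(s'_i)^{\vee}$, I obtain a factorization $f^*\mcF^{\vee}\twoheadrightarrow f^*\mcI_{s_i}\to \mcO_X$ of $(s'_i)^{\vee}$ in which the second arrow is injective by Lemma \ref{pullback of image inclusion is injective}. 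Consequently, $\mcI_{s'_i}=\im((s'_i)^{\vee})$ is identified with the image of $f^*\mcI_{s_i}\hookrightarrow \mcO_X$, which is by definition the ideal $f^{-1}\mcI_{s_i}\cdot \mcO_X$. Summing over $i$ yields the desired identity $\mcI'=f^{-1}\mcI\cdot \mcO_X$.

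With this identification in hand, the remaining calculation is direct. Using that $\iota'^{-1}$ is exact and the standard functoriality of inverse images,
\[(f\circ \iota')^{-1}\mcI\cdot \mcO_{Z'}=\iota'^{-1}\bigl(f^{-1}\mcI\cdot \mcO_X\bigr)\cdot \mcO_{Z'}=\iota'^{-1}\mcI'\cdot \mcO_{Z'}=0,\]
the final equality holding because $\iota'$ is the closed immersion defined by $\mcI'$. The universal property then delivers a unique morphism $Z'\to Z$ through which $f\circ \iota'$ factors. The only genuine subtlety is the compatibility of images under pullback used in the second paragraph, which is precisely what Lemma \ref{pullback of image inclusion is injective} is designed to supply; beyond that the argument is purely formal and I do not anticipate any serious obstacle.
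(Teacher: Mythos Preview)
Your proof is correct and follows essentially the same approach as the paper's: both identify the ideal sheaf $\mcI'$ cutting out $Z'$ with the inverse image ideal $f^{-1}\mcI\cdot\mcO_X$, using Lemma \ref{pullback of image inclusion is injective} as the key input, and then conclude via the universal property of closed immersions (the paper cites this as \cite[Lemma 26.4.7]{stacks}). The only cosmetic difference is that the paper bundles the sections into a single map $\oplus_i s_i^{\vee}\colon \oplus_i \mcF^{\vee}\to \mcO_Y$ and applies Lemma \ref{pullback of image inclusion is injective} once, whereas you apply it to each $s_i^{\vee}$ separately and then sum the resulting ideal sheaves; your version makes the summation step slightly more explicit.
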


\begin{proof}
%
%

Adopting the notation of the paragraph preceding Lemma \ref{pullback of image inclusion is injective}, let $\mcI = \sum_{i = 1}^n \mcI_{s_i}$ and $\mcI' = \sum_{i = 1}^n \mcI_{f^{\ast}(s_i)}$. Let $\eta: \mcI \to \mcO_Y$ be the canonical inclusion. 

We have the following well-known isomorphisms:
\begin{align*}
f^{\ast} \mcI &= f^{\ast}(\im(\oplus_{i} s_i^{\vee}: \oplus_i \mcF^{\vee} \to \O_Y)) \\
&\cong \im(f^{\ast}(\oplus_i s_i^{\vee}): f^{\ast}(\oplus_{i} \mcF^{\vee}) \to \O_X), \quad \text{$f^{\ast}$ commutes with images} \\
&\cong \im(\oplus_i f^{\ast}(s_i^{\vee}): \oplus_{i} f^{\ast}(\mcF^{\vee}) \to \O_X), \quad \text{$f^{\ast}$ commutes with direct sums} \\
&\cong \im(\oplus_{i} (f^{\ast}(s_i))^{\vee}: \oplus_i (f^{\ast} \mcF)^{\vee} \to \O_X), \quad \text{$\mcF$ is locally free of finite rank} \\
&= \mcI'.
\end{align*}




By Lemma \ref{pullback of image inclusion is injective}, the morphism $f^* \eta: f^* \mcI \to f^* \mcO_Y = \mcO_X$ is injective, so the ideal sheaf $\mcJ = \im(f^* \eta)$ is isomorphic to $f^*\mcI$. Since $f^* \mcI \cong \mcI'$, we conclude that $\mcJ \cong \mcI'$. The result follows from \cite[Lemma 26.4.7]{stacks}.



\end{proof}

\begin{lemma}
\label{scheme of zeros kills global sections}
Let $\mcF$ be a locally free sheaf of finite rank on $X$, let $s\in H^0(X,\mcF)$ and suppose $Z$ is the scheme of zeros of $s$, with inclusion map $i: Z \to X$. Then the image of $s$ under the canonical map $H^0(X,\mcF)\to H^0(Z,i^*\mcF)$ is zero.
\end{lemma}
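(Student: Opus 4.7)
The plan is to dualize the section $s$ and exploit the fact that, by construction, the ideal sheaf cutting out $Z$ contains the image $\mcI_s$ of $s^{\vee}$. Since $\mcF$ is locally free of finite rank, the double-dualization $\mcF^{\vee\vee} \cong \mcF$ (and more generally the formation of duals) is well-behaved, so vanishing of $s^{\vee}$ after restriction to $Z$ transfers to vanishing of $s$ itself after restriction. Concretely, I would phrase the statement as: the pullback $i^{\ast} s: \mcO_Z \to i^{\ast}\mcF$ is the zero morphism of sheaves.

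The steps I would carry out are as follows. First, recall that $s^{\vee}: \mcF^{\vee} \to \mcO_X$ factors through $\mcI_s = \im(s^{\vee})$ via a surjection $\mcF^{\vee} \twoheadrightarrow \mcI_s$ followed by the inclusion $\eta: \mcI_s \hookrightarrow \mcO_X$. By the very definition of the scheme of zeros (with $n = 1$), the composite $\mcI_s \xrightarrow{\eta} \mcO_X \twoheadrightarrow i_{\ast}\mcO_Z$ is zero, since $i_{\ast}\mcO_Z = \mcO_X/\mcI_s$. Composing with the surjection $\mcF^{\vee} \twoheadrightarrow \mcI_s$ shows that the map $\mcF^{\vee} \to i_{\ast}\mcO_Z$ induced by $s^{\vee}$ is zero. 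By the $(i^{\ast}, i_{\ast})$-adjunction, this is equivalent to the vanishing of the pullback $i^{\ast}(s^{\vee}): i^{\ast}\mcF^{\vee} \to i^{\ast}\mcO_X = \mcO_Z$.

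Next, since $\mcF$ is locally free of finite rank, there is a canonical isomorphism $i^{\ast}(\mcF^{\vee}) \cong (i^{\ast}\mcF)^{\vee}$ compatible with the pullback of morphisms to and from $\mcO$, so $i^{\ast}(s^{\vee})$ is identified with $(i^{\ast}s)^{\vee}$. Thus $(i^{\ast}s)^{\vee} = 0$. Dualizing once more and using that $i^{\ast}\mcF$ is locally free of finite rank on $Z$, we conclude $i^{\ast}s = 0$, which is exactly the claim that the image of $s$ in $H^0(Z, i^{\ast}\mcF)$ vanishes.

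I do not expect any serious obstacle. If the adjunction/duality bookkeeping feels heavy, a cleaner alternative is a direct local calculation: on an open $U \subseteq X$ trivializing $\mcF$, write $s|_U$ as an $r$-tuple $(f_1,\dots,f_r) \in \mcO_X(U)^{\oplus r}$; then $\mcI_s|_U = (f_1,\dots,f_r)\mcO_U$, so each $f_j$ is annihilated in $\mcO_Z|_{Z \cap U}$, making $i^{\ast}s$ vanish on $Z \cap U$. Gluing over a trivializing cover gives the result. The only mild care needed in either approach is to check that the canonical identifications $i^{\ast}(\mcF^{\vee}) \cong (i^{\ast}\mcF)^{\vee}$ and $i^{\ast}(\mcO_X) = \mcO_Z$ are compatible with the dualization of $s$, and this is standard for locally free sheaves of finite rank.
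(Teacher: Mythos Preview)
Your proof is correct and rests on the same key observation as the paper's: the image of $s^{\vee}$ is, by definition, the ideal sheaf $\mcI_s$ cutting out $Z$, so $s^{\vee}$ composed with the quotient $\mcO_X \to i_*\mcO_Z$ vanishes. Where the executions differ is in how this is transported back to $s$. The paper tensors $s$ with $i_*\mcO_Z$, identifies $\mcF \tsr_{\mcO_X} \mcO_X/\mcI \cong \mathscr{H}om_{\mcO_X}(\mcF^{\vee}, \mcO_X/\mcI)$, and then verifies on affine opens that the resulting map sends $1 \tsr q$ to $(\varphi \mapsto \varphi(t)q)$ with $\varphi(t) \in \mcI(U)$. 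Your argument instead passes through the $(i^*, i_*)$-adjunction to obtain $i^*(s^{\vee}) = 0$, identifies this with $(i^*s)^{\vee}$ via the compatibility of pullback with duals of locally free sheaves, and concludes by reflexivity. This packaging is a bit slicker and avoids the explicit affine-local bookkeeping, though both are doing the same thing under the hood. Your alternative local-trivialization argument is also fine and is essentially the paper's computation stripped of the $\mathscr{H}om$ formalism.
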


\begin{proof}
The natural transformation $-\tsr_{\O_X} i_*\O_Z\to i_*i^*(-)$ is an isomorphism on locally free $\O_X$-modules of finite rank. Since $i:Z\to X$ is a closed immersion, $i_*$ is faithful, it suffices to prove that $s\tsr{\rm id}_{i_*\O_Z}=0$. 

Let $U\subset X$ be an affine open. Let $s|_U(1)=t\in \mcF(U)$. Then the dual map $s^{\vee}|_U:\mcF^{\vee}(U)\to \O_X^{\vee}(U)\cong \O_X(U)$  is completely determined by $s^{\vee}|_U(\varphi)(1)=\varphi(t)$. Now, we have canonical isomorphisms
$$\mcF\tsr_{\O_X} \O_X/\mcI\cong (\mcF^{\vee})^{\vee}\tsr_{\O_X} \O_X/\mcI\cong {\mathscr Hom}_{\O_X}(\mcF^{\vee},\O_X/\mcI)$$
and composing with $s\tsr{\rm id}_{i_*\O_Z}$ we get a map
$$\O_X\tsr_{\O_X} \O_X/\mcI\to {\mathscr Hom}_{\O_X}(\mcF^{\vee},\O_X/\mcI).$$
Since $\mcI$ is quasicoherent, $(\O_X/\mcI)(U)=\O_X(U)/\mcI(U)$, so on affine opens, the map above takes $1\tsr q \mapsto (\varphi \mapsto \varphi(t)\cdot q)$. But $\varphi(t)\in \mcI(U)$ so the composite is 0 on each piece of an affine open base. Hence $s\tsr{\rm id}_{i_*\O_Z}=0$, as desired.

\end{proof}

Now suppose $X=\Proj S$ for a commutative, Noetherian, graded algebra $S$. Let ${\mf m}=S_{\ge 1}$ be the irrelevant ideal. Denote by $H_{\mf m}^i(-)$ the $i$-th right derived functor of the ${\mf m}$-torsion functor $\G_{\mf m}:{\tt Mod}(S)\to {\tt Mod}(S)$. Recall that $\G_{\mf m}$ is defined on objects by $\G_{\mf m}(M)=\{m\in M\ :\ \mf m^km=0\ \text{ for some } k\in \N\}$. Note that $H_{\mf m}^i(S)$ admits a natural $\Z$-grading. There is an exact sequence of graded $S$-modules
\begin{equation}
\label{four-term-sequence}
0\to H_{\mf m}^0(S)\to S \xto{\z(S)} \G_*(\widetilde S)\to H_{\mf m}^1(S)\to 0
\end{equation}
where $\G_*(\widetilde S)=\bigoplus_{n\in \Z} H^0(X, \O_X(n))$. The module $\G_*(\widetilde S)$ admits a canonical ring structure, and in fact $\z(S)$ is a homomorphism of graded rings, see \cite[Theorem 13.21]{24hours} for example. We note that $\z(S)$ can be defined by choosing an affine open cover $X=\bigcup_{j=1}^{\ell} D_+(f_j)$ where $f_1,\ldots, f_{\ell}\in S_+$ are homogeneous.
Then for a homogeneous element $x\in S_d$, the image $\z(S)(x)\in H^0(X,\O(d))$ is the unique global section whose restriction to each $D_+(f_j)$ corresponds to the morphism $S_{(f_j)}\to S(d)_{(f_j)}$ of free $S_{(f_j)}$-modules given by $1\mapsto (x/f_j)\cdot f_j$.

For a morphism of schemes $f:X\to Y$, and a sheaf $\mathcal G$ of $\O_Y$-modules, let $\hat{f}:H^0(Y,\mathcal G)\to H^0(X, f^*\mathcal G)$ be the canonical map. When $\mathcal G=\O_Y(n)$ we denote this map by $\hat{f}(n)$.
 We recall that the sequence (\ref{four-term-sequence}) is functorial in the following sense.

\begin{lemma}
\label{naturality of four-term sequence}
Let $S$ and $T$ be commutative, Noetherian, graded algebras, and let $f:S\to T$ be a graded algebra surjection. Let $\mf m=S_{\ge 1}$, $\mathfrak n=T_{\ge 1}$, $X=\Proj S$ and $Y=\Proj T$. Then there is a canonical closed immersion $i:Y\to X$ and isomorphisms $i^*(\O_X(n))\cong \O_Y(n)$ for all $n\in \Z$ such that the following diagram, whose rows are truncations of (\ref{four-term-sequence}), commutes.\\
\centerline{\xymatrix{
0 \ar[r] & H^0_{\mf m}(S) \ar[dd]^{f} \ar[r] & S \ar[dd]^{f} \ar[r]^-{\z(S)} &  \G_*(\widetilde S) \ar[d]^{\oplus \hat{\imath}(n)}  \\
& & & \displaystyle\bigoplus_{n \ge 0} H^0(Y, i^*\O_X(n))\ar[d]^{\cong}\\
0 \ar[r] & H^0_{\mathfrak n}(T) \ar[r] & T \ar[r]^-{\z(T)} &  \G_*(\widetilde T)
}}

\end{lemma}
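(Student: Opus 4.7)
The plan is to first produce the closed immersion $i:Y\to X$ and the pullback isomorphism $i^*\O_X(n)\cong \O_Y(n)$ from the standard $\Proj$-theoretic machinery. Since $f:S\to T$ is a degree-preserving surjection with homogeneous kernel $\mf a$, one obtains a canonical closed immersion $i:\Proj T\hookrightarrow \Proj S$ with image $V_+(\mf a)$; see, e.g., \cite[Tag 01N2]{stacks}. The identification $i^*\O_X(n)\cong \O_Y(n)$ then follows formally from compatibility of pullback with the $\widetilde{(\,\cdot\,)}$ functor: $i^*\widetilde{S(n)}\cong \widetilde{S(n)\tsr_S T}\cong \widetilde{T(n)}=\O_Y(n)$. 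These identifications are compatible with the natural graded ring structures on $\G_*(\widetilde S)$ and $\G_*(\widetilde T)$.

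Commutativity of the left-hand square reduces to the claim that $f$ carries $H^0_{\mf m}(S)$ into $H^0_{\mf n}(T)$. Because $f$ is a graded surjection, $\mf n=f(\mf m)$ and hence $\mf n^k=f(\mf m^k)$ for every $k\ge 1$; so if $\mf m^k x=0$, then $\mf n^k f(x)=f(\mf m^k x)=0$. With this observation, the left square is obvious.

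The substance of the argument is commutativity of the right-hand square: for $x\in S_n$, the section $\hat{\imath}(n)(\z(S)(x))$ should correspond, under $i^*\O_X(n)\cong \O_Y(n)$, to $\z(T)(f(x))\in H^0(Y,\O_Y(n))$. I would verify this using the local description of $\z$ recalled before the lemma. Fix an affine cover $X=\bigcup_j D_+(f_j)$ with $f_j\in S_+$ homogeneous, and set $g_j=f(f_j)\in T_+$. Then $i^{-1}(D_+(f_j))=D_+(g_j)$, and these distinguished opens cover $Y$ (using surjectivity of $f$). On $D_+(f_j)$, $\z(S)(x)$ is given by the morphism of free $S_{(f_j)}$-modules sending $1\mapsto (x/f_j)\cdot f_j$. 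Pulling back along $i$ and using the canonical identification $S_{(f_j)}\tsr_S T\cong T_{(g_j)}$ induced by $f$ transports this to the map $1\mapsto (f(x)/g_j)\cdot g_j$ on $T_{(g_j)}$, which is exactly the local description of $\z(T)(f(x))$ on $D_+(g_j)$. These local equalities patch, via the standard gluing argument for sections of $\O_Y(n)$, to the desired global identity.

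The main obstacle is the careful bookkeeping around the canonical isomorphism $i^*\O_X(n)\cong \O_Y(n)$: one must check that the abstract sheaf-theoretic pullback agrees with the explicit affine-local description above, i.e., that the transition data on the cover $\{D_+(g_j)\}$ inherited from $\{D_+(f_j)\}$ coincides with the intrinsic transition data of $\O_Y(n)$. Once this compatibility has been spelled out, the commutativity of both squares becomes essentially formal, and the truncation of (\ref{four-term-sequence}) exhibits the claimed naturality.
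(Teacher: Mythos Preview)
Your proposal is correct and follows the same standard $\Proj$-theoretic route as the paper; the paper's own proof is essentially a two-sentence citation to \cite[Lemma 27.11.5]{stacks} for the right square and the isomorphism $i^*\O_X(n)\cong\O_Y(n)$, together with the remark that the left square then follows from exactness of the rows.

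The one genuine difference worth noting is the treatment of the left square. You argue directly that $f$ carries $\mf m$-torsion to $\mf n$-torsion via $\mf n^k=f(\mf m^k)$, which is self-contained and independent of the right square. The paper instead observes that once the right square commutes, exactness forces $f(\ker\z(S))\subseteq\ker\z(T)$, so the restricted map on local cohomology is automatic. Your approach has the minor advantage of not presupposing the right-square computation; the paper's approach is slicker if one is already invoking a reference for the right square. For the right square itself, you carry out explicitly the affine-local verification that the paper simply cites, which is a reasonable way to make the argument self-contained.
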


\begin{proof}
The commutativity of the rightmost square and the fact that $i^*(\O_X(n))\cong \O_Y(n)$ are standard, see \cite[Lemma 27.11.5]{stacks}, for example. Commutativity of the left square then follows from exactness of the rows.

\end{proof}

When global sections $s_1,\ldots, s_n\in H^0(X, \O_X(1))$ are in the image of $\z(S)_1$, the following lemma describes the scheme of zeros $Z$, defined above in geometric terms, in terms of the $\Proj$ construction. Proving the lemma is routine. Nonetheless, it is an important link between our results and the relevant literature, so we include a sketch.

\begin{lemma}
\label{scheme of zeros as proj}
Assume $X=\Proj S$ for a commutative, graded algebra $S$, and let $\mcF=\O_X(1)$. Let $s_1,\ldots, s_n\in \im \, \z(S)_1$ be global sections of $\mcF$ and let $Z$ be the scheme of zeros of the $s_i$. Then $Z\cong \Proj S/\la \tilde s_1,\ldots, \tilde s_n\ra$ for any $\tilde s_1,\ldots, \tilde s_n\in S_1$ satisfying $\z(S)(\tilde s_i)=s_i$ for all $i=1,\ldots, n.$
\end{lemma}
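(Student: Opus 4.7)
The plan is to set $T = S/\la \tilde s_1,\ldots,\tilde s_n\ra$ and let $f\colon S\twoheadrightarrow T$ be the quotient map. By Lemma \ref{naturality of four-term sequence}, $f$ induces a closed immersion $i\colon\Proj T\hookrightarrow \Proj S = X$, with some quasicoherent defining ideal sheaf $\mcJ\subseteq\O_X$. The goal is then to prove that $\mcJ$ coincides with the ideal sheaf $\mcI := \sum_{k=1}^n \mcI_{s_k}$ cutting out the scheme of zeros $Z$. Since both $\mcI$ and $\mcJ$ are quasicoherent, this equality may be verified on an affine open cover of $X$.

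Concretely, I would cover $X$ by basic opens $D_+(g_j)\cong \Spec S_{(g_j)}$ for appropriate homogeneous $g_j\in S_+$. On each such open, a standard $\Proj$ computation identifies $\mcJ(D_+(g_j))$ with the kernel of the localized map $S_{(g_j)}\to T_{(g_j)}$, that is, with the degree-$0$ part $\la\tilde s_1,\ldots,\tilde s_n\ra_{(g_j)}$ of the localized homogeneous ideal. On the $\mcI$ side, I would invoke the explicit formula for $\z(S)$ recalled just before Lemma \ref{naturality of four-term sequence}: the restriction of $s_k = \z(S)(\tilde s_k)$ to $D_+(g_j)$ is the morphism of rank-one free $S_{(g_j)}$-modules $S_{(g_j)}\to S(1)_{(g_j)}$ sending $1\mapsto (\tilde s_k/g_j)\cdot g_j$. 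After choosing a local trivialization of $S(1)_{(g_j)}$ as a free $S_{(g_j)}$-module, this map becomes multiplication by an element of $S_{(g_j)}$ which (up to a unit coming from the trivialization) represents $\tilde s_k$ inside $\la\tilde s_1,\ldots,\tilde s_n\ra_{(g_j)}$. Dualizing the rank-one map and taking images yields $\mcI_{s_k}(D_+(g_j))$ as the $S_{(g_j)}$-ideal generated by that same element. Summing over $k$ gives $\mcI(D_+(g_j)) = \mcJ(D_+(g_j))$, whence $\mcI = \mcJ$ and the desired isomorphism $Z\cong\Proj T$ follows.

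The main obstacle is the bookkeeping forced by the degree shift in $\O_X(1) = \widetilde{S(1)}$: one must identify $\tilde s_k\in S_1 = S(1)_0$ with its image in $S(1)_{(g_j)}$, choose a local trivialization of the rank-one free $S_{(g_j)}$-module $S(1)_{(g_j)}$, and check that dualization of the rank-one module map is compatible with this choice so that the image ideal truly matches the ideal generated by $\tilde s_k$ in $S_{(g_j)}$. Once that bookkeeping is in place, the sheafification properties of quasicoherent ideals combined with the computation on each $D_+(g_j)$ immediately deliver $\mcI = \mcJ$.
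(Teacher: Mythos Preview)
Your proposal is correct and follows essentially the same approach as the paper: both arguments compute the ideal sheaf of the scheme of zeros on a basic affine cover $\{D_+(g_j)\}$ by unwinding the explicit description of $\z(S)$ and dualizing the resulting rank-one module map, then match this with the ideal sheaf of the closed immersion $\Proj S/\langle\tilde s_1,\ldots,\tilde s_n\rangle\hookrightarrow X$. The only notable difference is that the paper takes the $g_j$ to be a minimal generating set $f_1,\ldots,f_g\in S_1$, so that $f_j$ is itself a free generator of $S(1)_{(f_j)}$; this choice makes the ``bookkeeping forced by the degree shift'' that you flag as the main obstacle essentially trivial, and you may wish to adopt it.
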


\begin{proof}
Let  $s_1,\ldots, s_n\in \im \z(S)_1$ and choose $\tilde s_1,\ldots, \tilde s_n\in S_1$ satisfying $\z(\tilde s_i)=s_i$ for all $i=1,\ldots, n.$ By definition, $\mcI_Z$ is the sheaf of ideals corresponding to $\sum_{i=1}^n \mcI_{s_i}$ under $\O_X^{\vee}\cong \O_X$. We describe $\mcI_Z$ on an affine open cover of $X$. 

Let $f_1,\ldots, f_g\in S_1$ be a minimal generating set for $S$. Fix $1\le i\le n$ and $1\le j\le g$ and let $U=D_+(f_j)$. 
Since $U$ is affine, we have $\mathcal I_{s_i}(U)=\im s_i^{\vee}(U)$ as $\O_X(U)$-modules. The morphism $s_i^{\vee}(U):\mcF^{\vee}(U)\to \O_X^{\vee}(U)$ is given by $\phi\mapsto \phi\circ s_i|_U$, and its image corresponds to an ideal of $\O_X(U)$ via the identification
$$\O_X^{\vee}(U)\cong \Hom_{S_{(f_j)}}(S_{(f_j)}, S_{(f_j)})\cong S_{(f_j)}=\O_X(U).$$
Since $f_j$ freely generates the $S_{(f_j)}$-module $S(1)_{(f_j)}$, an element 
$$\varphi\in \Hom_{S_{(f_j)}}(S(1)_{(f_j)}, S_{(f_j)})\cong \mcF^{\vee}(U)$$ is determined by $\varphi(f_j)$. Also recall that $s_i|_U$ corresponds to the map 
$$S_{(f_j)}\to S(1)_{(f_j)}, \qquad 1\mapsto (\tilde s_i/f_j)\cdot f_j.$$
Under these identifications, $s_i^{\vee}(U)$ maps $\varphi\mapsto (\tilde s_i/f_j)\varphi(f_j)$. Since $\varphi(f_j)$ is arbitrary, the image of $s_i^{\vee}(U)$ corresponds to the ideal $\la \tilde s_i/f_j\ra\subset \O_X(U)$. Summing over all sections, we get 
\begin{equation}
\label{sheaf-of-ideals}
\mcI_Z(U)=\la \tilde s_1/f_j,\ldots, \tilde s_n/f_j\ra\subset \O_X(U).
\end{equation}
The unique saturated ideal $I_Z$ of $S$ such that $Z\cong \Proj S/I_Z$ is
$$I_Z=\bigoplus_{d\in \N} I_d=\bigoplus_{d\in \N}\{ a\in S_d\ |\ a/f_j^d \in \mcI_Z(D_+(f_j)) \text{ for all }1\le j\le g\}.$$
Using (\ref{sheaf-of-ideals}), $I_Z$ is easily seen to be the saturation of $\la \tilde s_1,\ldots, \tilde s_n\ra$.

\end{proof}

\subsection{Truncated point schemes}\label{Tps}~\\

For any $d\ge 1$ and any 1-generated, graded algebra $A$, the authors of \cite{ATVI} define a scheme $\U_d(A)$ whose closed points are in bijective correspondence with isomorphism classes of \emph{truncated point modules of length $d+1$}; that is, graded $A$-modules $M$ that are generated by $M_0$, and $\dim_{\k} M_i=1$ for $0\le i\le d$ and $\dim_{\k} M_i=0$ otherwise. 

 Let $A\in {\tt Alg}^1$, so $A={\mathbf T}(V)/I$ for $V=A_1$ a finite-dimensional vector space and a homogeneous ideal $I\subset {\mathbf T}(V)$. Let $\P=\P(V^*)$. If $r\in V^{\tsr d}$, we may view $r$ as a global section $\widetilde{r}$ of the sheaf $\O_{\P^{\times d}}(1):={\rm pr}_1^*\O_{\P}(1)\tsr \cdots \tsr {\rm pr}_d^*\O_{\P}(1)$ on $\P^{\times d}$ via the $\k$-linear isomorphisms
$$H^0(\P^{\times d}, \O_{\P^{\times d}}(1))\cong H^0(\P, \O_{\P}(1))^{\tsr d}\cong ((V^*)^*)^{\tsr d}\cong V^{\tsr d}.$$
We define the \emph{$d$-th truncated point scheme for $A$},  $\U_d(A)$, to be the scheme of zeros of any basis for the space $\widetilde{I_d}=\{\widetilde{r}\ |\ r\in I_d\}$. Note that $\U_d(A)$ is independent of choice of basis for $\widetilde{I_d}$. Let $i_d:\U_d(A)\to \P^{\times d}$ be the inclusion morphism.
Note that since $A\in {\tt Alg}^1$, the map $\pi_A:{\mathbf T}(A_1)\to A$ is the identity in degree 1. Thus $I=\ker(\pi_A)$ satisfies $I=I_{\ge 2}$, so $\U_1(A)=\P$. 


For $1\le i\le j\le d$, there is a canonical projection to the $j-i+1$ factors of $\P^{\times d}$ in positions $i$ through $j$:
$${\rm pr}^{(d)}_{ij}:\P^{\times d}=\P\times \cdots \times \prod_{k=i}^j \P \times \cdots \times \P \to \prod_{k=i}^j \P.$$ 
These projections induce $\k$-linear maps $$\left({\rm pr}^{(d)}_{ij}\right)^*:H^0(\P^{\times(j-i+1)},\O_{\P^{\times(j-i+1)}}(1))\to H^0(\P^{\times d}, \O_{\P^{\times d}}(1)),$$ and under the identifications above, $$\left({\rm pr}^{(d)}_{ij}\right)^*(I_{j-i+1})\subset V^{\tsr i-1}\tsr I_{j-i+1}\tsr V^{\tsr d-j}\subset I_d.$$ If $Z'$ denotes the scheme of zeros of $\left({\rm pr}^{(d)}_{ij}\right)^*(I_{j-i+1})$, then by Lemma \ref{induced morphism on scheme of zeros}, 
there exists a morphism $Z'\to \U_{j-i+1}(A)$.  Since $\U_{d}(A) \subseteq Z'$, we restrict and obtain a morphism $\pi^{(d)}_{ij}:\U_d(A)\to \U_{j-i+1}(A)$  such that the following diagram commutes.

\centerline{
\xymatrix{
\U_d(A)\ar[r]^{i_d}\ar[d]^{\pi^{(d)}_{ij}} & \P^{\times d}\ar[d]^{{\rm pr}^{(d)}_{ij}}\\
\U_{j-i+1}(A)\ar[r]^{i_{j-i+1}} & \P^{\times (j-i+1)}
}}

~\\

\subsection{The categories ${\tt Sys}$ and ${\tt PSys}^n$}~\\ 


We denote by ${\tt Sys}$ the category whose objects are collections $$\mfZ=\{(Z_d,\pi^{(d)}_{ij})\}_{d\ge 1, \, 1\leq i \leq j \leq d},$$ where the $Z_d$ are schemes and the $\pi^{(d)}_{ij}:Z_d\to Z_{j-i+1}$ are scheme morphisms, compatible in the sense that 
$\pi^{(j-i+1)}_{i' j'} \circ \pi^{(d)}_{ij} = \pi^{(d)}_{i+i'-1 \ i+j'-1}$ for all $1 \leq i \leq j \leq d$ and all $1 \leq i' \leq j' \leq j-i+1$.
We further require the map $\pi_{1d}^{(d)}: Z_{d} \to Z_{d}$ to be the identity map for all $d\ge 1$.
A morphism $\varphi:\mfZ\to \mfZ'$
is a collection of scheme morphisms $\{\varphi_d:Z_d\to Z_d'\}_{d \geq 1}$ satisfying $(\pi^{(d)}_{ij})'\varphi_d=\varphi_{j-i+1}\pi^{(d)}_{ij}$ for all $d \geq 1$, $1\le i\le j\le d$. We refer to objects of ${\tt Sys}$ as \emph{systems}. 


\begin{lemma}
\label{functors between systems and schemes}
The functor ${\mathbf F}: {\tt Sys} \to {\tt Sch}$ given by ${\mathbf F}(\mfZ)=Z_1$ and $\mathbf{F}(\varphi)=\varphi_1$ has a right adjoint ${\mathbf G}: {\tt Sch} \to {\tt Sys}$, given on objects by 
$${\mathbf G}(X) = \{(X^{\times d}, {\rm pr}_{ij}^{(d)})\}_{d \geq 1, 1 \leq i \leq j \leq d}$$
where ${\rm pr}_{ij}^{(d)}$ are the canonical projections onto the factors in positions $i$ through $j$.
\end{lemma}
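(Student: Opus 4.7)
The plan is to establish the adjunction directly by exhibiting a natural bijection $\Hom_{{\tt Sys}}(\mfZ,\mathbf{G}(X))\cong \Hom_{{\tt Sch}}(Z_1,X)$. First I would verify that $\mathbf{G}(X)$ is actually an object of ${\tt Sys}$: the compatibility axiom ${\rm pr}^{(j-i+1)}_{i'j'}\circ {\rm pr}^{(d)}_{ij}={\rm pr}^{(d)}_{i+i'-1,\,i+j'-1}$ follows immediately from the universal property of the product, since both sides are the canonical projection $X^{\times d}\to X^{\times (j'-i'+1)}$ onto positions $i+i'-1$ through $i+j'-1$, and similarly ${\rm pr}^{(d)}_{1d}$ is the identity on $X^{\times d}$.

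Next I would define the forward map $\Phi(\varphi)=\varphi_1$ and construct a candidate inverse $\Psi$ as follows: given $f:Z_1\to X$, let $\Psi(f)=\{\tilde f_d\}_{d\ge 1}$, where $\tilde f_d:Z_d\to X^{\times d}$ is the unique morphism whose $k$-th component (for $1\le k\le d$) equals $f\circ \pi^{(d)}_{kk}$. Existence and uniqueness of $\tilde f_d$ are supplied by the universal property of the product. I would then verify that $\{\tilde f_d\}$ is a morphism in ${\tt Sys}$, i.e.\ that ${\rm pr}^{(d)}_{ij}\circ \tilde f_d=\tilde f_{j-i+1}\circ \pi^{(d)}_{ij}$; comparing $k$-th components (for each $1\le k\le j-i+1$) reduces the claim to the compatibility $\pi^{(j-i+1)}_{kk}\circ \pi^{(d)}_{ij}=\pi^{(d)}_{i+k-1,\,i+k-1}$ that is part of the definition of ${\tt Sys}$.

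Finally I would check that $\Phi$ and $\Psi$ are mutually inverse and natural. The equality $\Phi\circ\Psi={\rm id}$ is immediate, because $\tilde f_1$ has single component $f\circ \pi^{(1)}_{11}=f$. For $\Psi\circ\Phi={\rm id}$, given $\varphi=\{\varphi_d\}$, the $k$-th component of $\varphi_d$ equals ${\rm pr}^{(d)}_{kk}\circ \varphi_d=\varphi_1\circ \pi^{(d)}_{kk}$ by the defining compatibility of $\varphi$, which matches the formula for $\Psi(\varphi_1)_d$ component by component. Naturality in $\mfZ$ and in $X$ is automatic from the componentwise definition of $\Psi$. The only real obstacle here is bookkeeping: keeping the various index-shift conventions for $\pi^{(d)}_{ij}$ and ${\rm pr}^{(d)}_{ij}$ straight so that the two compatibility axioms — the one defining objects of ${\tt Sys}$ and the one defining morphisms of ${\tt Sys}$ — line up correctly on both sides of the claim.
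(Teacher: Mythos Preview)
Your proposal is correct and follows essentially the same strategy as the paper: both define the hom-set bijection by $\varphi\mapsto\varphi_1$ and check it is a natural isomorphism. The only notable difference is presentational—the paper builds $X^{\times d}$ and the projections ${\rm pr}^{(d)}_{ij}$ inductively via $X_d=X_{d-1}\times X_1$, whereas you invoke the universal property of the $d$-fold product in one step; your version is slightly cleaner, but the underlying argument is the same.
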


\begin{proof}
Let $X \in \Sch$. Define ${\mathbf G}(X) = \{(X_d, \pi_{ij}^{(d)})\}_{d \geq 1, 1 \leq i \leq j \leq d}$ as follows. Set $X_1 = X$ and $\pi_{11}^{(1)} = {\rm id}_X$. Fix some $d \geq 2$ and assume inductively that all $X_{d'}$ and $\pi_{ij}^{(d')}$ are constructed satisfying the desired compatibilities for all $d' < d$. Let $X_d = X_{d-1} \times X_1$. Let $\pi_{1d}^{(d)} = {\rm id}_{X_d}$. Let $\pi_{1, d-1}^{(d)}: X_d \to X_{d-1}$, $\pi_{dd}^{(d)}: X_d \to X_1$ be the canonical maps from the fiber product. If $j \leq d-1$, let $\pi_{ij}^{(d)} = \pi_{ij}^{(d-1)} \circ \pi_{1, d-1}^{(d)}$. If $i < d$, let $\pi_{id}^{(d)} = (\pi_{i, d-1}^{(d)}, \pi_{dd}^{(d)})$ be the unique map $X_d \to X_{d-i+1} = X_{d-i} \times X_1$ satisfying $\pi_{1, d-i}^{(d-i+1)} \circ \pi_{id}^{(d)} = \pi_{i, d-1}^{(d)}$ and $\pi_{d-i+1, d-i+1}^{(d-i+1)} \circ \pi_{id}^{(d)} = \pi_{dd}^{(d)}$. Then one checks that the $\pi_{ij}^{(d)}$ satisfy the required relations. Hence ${\mathbf G}(X) \in {\tt Sys}$. 

Let $f: X \to X'$ be a morphism in $\Sch$. Define ${\mathbf G}(f): {\mathbf G}(X) \to {\mathbf G}(X')$ as follows. Set ${\mathbf G}(f)_1 = f$. Fix some $d \geq 2$ and assume that all ${\mathbf G}(f)_{d'}: X_{d'} \to X'_{d'}$ have been defined and commute with the $\pi_{ij}^{(k)}$, $\pi_{ij}^{' (k)}$ in the appropriate sense. Let ${\mathbf G}(f)_d: X_d \to X'_d = X'_{d-1} \times X'_1$ be the map $({\mathbf G}(f)_{d-1} \circ \pi_{1, d-1}^{(d)}, {\mathbf G}(f)_1 \circ \pi_{dd}^{(d)})$. Then one checks that $f_{d}$ commutes with the $\pi_{ij}^{(k)}$, $\pi_{ij}^{' (k)}$. Hence ${\mathbf G}(f)$ is a morphism in ${\tt Sys}$. It is then straightforward to check that ${\mathbf G}: \Sch \to {\tt Sys}$ is a functor. 

Let ${\mathbf F}: {\tt Sys} \to \Sch$ be the ``forgetful" functor,  ${\mathbf F}(\mfZ) = Z_1$ for $\mfZ \in {\tt Sys}$ and ${\mathbf F}(\varphi: \mfZ \to \mfZ') = \varphi_1$. For any $X \in \Sch$ and $\mfZ \in {\tt Sys}$, define $$T: \Hom_{{\tt Sys}}(\mfZ, {\mathbf G}(X)) \to \Hom_{\Sch}({\mathbf F}(\mfZ), X), \quad T(\varphi) = \varphi_1.$$ It is straightforward to check that $T$ is a natural isomorphism, and so $({\mathbf F}, {\mathbf G})$ are an adjoint pair of functors. 

\end{proof}


Let $\P=\P^n_{\k}$ denote $n$-dimensional projective space over $\k$ and let $\Phi^n={\mathbf G}(\P)$. 
We reserve the notation $\Phi^n$ for this system. A \emph{projective system} 
is a pair $(\mfZ, \n)$ consisting of an object $\mfZ\in {\tt Sys}$ and a morphism $\n:\mfZ\to \Phi^n$, for some $n$, in ${\tt Sys}$ such that $\n_d$ is a closed immersion for all $d\ge 1$. 

For fixed $n \geq 0$, let ${\tt PSys}^n$ denote the category whose objects are projective systems $(\mfZ, \n:\mfZ\to \Phi^n)$. A morphism $(\mfZ, \n) \to (\mfZ', \n')$ in ${\tt PSys}^n$ is a morphism $\varphi:\mfZ\to \mfZ'$ in ${\tt Sys}$ satisfying $\n'\circ \varphi = \n$. 
Since the codomain of $\n_d$ and $\n'_d \circ \varphi_d$ is the fiber product $\P^{\times d}$, it is not hard to check that the condition on morphisms holds if and only if $\n'_1\circ \varphi_1 = \n_1$.


%

\begin{defn} 
\label{special systems}
For $A\in {\tt Alg}^1$, let $n=\dim A_1-1$ and $\P=\P(A_1^*)$. Then $\U_A=\{(\U_d(A), \pi^{(d)}_{ij})\}$ defined in Section \ref{Tps} is an object of ${\tt Sys}$ and the pair $(\U_A, i)$ is a projective system in ${\tt PSys}^{n}$, where $i_d:\U_d(A)\to \P^{\times d}$ are the inclusions. We refer to $(\U_A, i)$ as the \emph{truncated point system of $A$}. 

\end{defn}



\begin{rmk}  For any $n \geq 0$, let ${\tt Proj}^n$ denote the category whose objects are pairs $(X, f)$, where $X \in \Sch$ and $f: X \to \P^n_{\k}$ is a closed immersion in $\Sch$. A morphism $(X, f) \to (X', f')$ in ${\tt Proj}^n$ is a morphism $g: X \to X'$ in $\Sch$ such that $f' \circ g = f$. 
The functors ${\mathbf F}$ and ${\mathbf G}$ defined in Lemma \ref{functors between systems and schemes} extend in the obvious way to a pair of adjoint functors ${\mathbf F}: {\tt PSys}^n \to {\tt Proj}^n$ and ${\mathbf G}: {\tt Proj}^n \to {\tt PSys}^n$. 

~\\
\end{rmk}

\subsection{The functor $\mbfB:{\tt PSys}^n \to {\tt Alg}$}~\\

Motivated by the construction of a graded algebra $B$ from the collection of truncated point schemes associated to a graded algebra $A$ in \cite[Section 3.17]{ATVI}, we now define a contravariant functor $\mbfB:{\tt PSys}^n \to {\tt Alg}$. 

Fix an object $(\mfZ, \n: \mfZ \to \Phi^n) \in {\tt PSys}^n$. For ease of notation, in this subsection we  write ${\rm pr}_{ij}$ and $\pi_{ij}$ instead of ${\rm pr}^{(d)}_{ij}$ and $\pi^{(d)}_{ij}$ when the domain is clear from context. 
Let $\mcL_d=\n_d^*\O_{\P^{\times d}}(1)$ be the restriction of $\O_{\P^{\times d}}(1)$ to $Z_d$. Then there is a canonical isomorphism $$\mcL_{i+j}\cong \pi_{1,i}^*\mcL_i\tsr_{\O_{Z_{i+j}}} \pi_{i+1,i+j}^*\mcL_j$$ for all $i,j\ge 1$.

For $d\ge 1$, let $B_d = H^0(Z_d, \mcL_d)$. Then for any $i,j\ge 1$ we have $\k$-linear maps $\mu_{i,j}:B_i\tsr B_j\to B_{i+j}$ given by
\begin{align*}
B_i\tsr B_j &\xto{\widehat{\pi}_{1,i}\tsr \widehat{\pi}_{i+1,i+j}}H^0(Z_{i+j}, \pi_{1,i}^*\mcL_i)\tsr H^0(Z_{i+j}, \pi_{i+1,i+j}^*\mcL_j)\\
&\xto{\qquad\cup\qquad} H^0(Z_{i+j},\pi_{1,i}^*\mcL_i\tsr_{\O_{Z_{i+j}}} \pi_{i+1,i+j}^*\mcL_j)\\
&\xto{\qquad\cong\qquad} H^0(Z_{i+j},\mcL_{i+j})=B_{i+j},
\end{align*}
where $\widehat{\pi}_{1,i}$ and $\widehat{\pi}_{i+1,i+j}$ are the canonical induced maps.
The second map is the cup product, and the third is obtained by applying $H^0$ to the isomorphism above.
It is straightforward to check that, setting $B_0=\k$, the maps $\mu_{i,j}$ endow the space  $\mbfB(\mfZ)=\bigoplus_{d\ge 0} B_d$ with the structure of a unital, associative, graded algebra. 

If $\varphi:\mfZ\to \mfZ'$ is any morphism in ${\tt PSys}^n$, it follows that 
$(\pi_{1i})'\varphi_{i+j}=\varphi_{i}\pi_{1i}$ and $(\pi_{i+1,i+j})'\varphi_{i+j}=\varphi_{j}\pi_{i+1,i+j}$ for all $1\le i\le j$. This, together with the fact that cup product is compatible with pullbacks, implies that the direct sum of the canonical maps $\widehat{\varphi}_d:H^0(Z'_d,\mcL'_d)\to H^0(Z_d,\mcL_d)$ induces a morphism of graded algebras
$\mbfB(\varphi) : \mbfB(\mfZ')\to \mbfB(\mfZ).$ One easily checks that $\mbfB:{\tt PSys}^n \to {\tt Alg}$ is a contravariant functor.


\begin{ex}
\label{free algebra}
Consider the projective system $(\Phi^n, {\rm id}) \in {\tt PSys}^n$. Let $V$ be a vector space of dimension $n+1$. If $\P=\P(V^*)$, we have $$H^0(\P^{\times d}, \O_{\P^{\times d}}(1))=((V^*)^*)^{\tsr d}=V^{\tsr d}.$$ 
Thus $\mbfB(\Phi^n, {\rm id})={\mathbf T}(V)$ is the tensor algebra on $V$.
\end{ex}

\section{Realizing $\U_d(A)$ via Proj} 
\label{Td}


Let $A$ be an object of  ${\tt Alg}^1$. Realizing the scheme $\U_d(A)$ as  $\Proj {\mathbf U}_d(A)$, for a certain graded, commutative algebra ${\mathbf U}_d(A)$ will be useful in Sections \ref{structure} and \ref{examples} for studying the relationship between $A$ and $\mathbf{B}(\U_A)$. In this section we define a family of functors $\{{\mathbf U}_d\}_{d\ge 2}$ on the category ${\tt Alg}^1$, and then we use Lemma \ref{scheme of zeros as proj} to prove there is an isomorphism of schemes, $\U_d(A) \cong \Proj {\mathbf U}_d(A)$. We will prove that if $A$ is commutative, then for sufficiently large $d$, ${\mathbf U}_d(A)$ is isomorphic to the $d$-th Veronese subalgebra of $A$. This result will also be used in Section \ref{examples}.
~\\

\subsection{The functor ${\mathbf U}_d$}~\\ 
\label{T_d functor}

Let ${\tt CAlg}$ denote the category whose objects are commutative, graded algebras and whose morphisms are graded algebra maps.  For any integer, $d \geq 2$, we associate to an algebra $A\in {\tt Alg}^1$ certain \emph{multilinearization data}, which we use to define a functor ${\mathbf U}_d : {\tt Alg}^1 \to {\tt CAlg}$. 

Let $\Sym: {\tt Vect} \to {\tt CAlg}$ denote the symmetric algebra functor. 
 Let $A \in {\tt Alg}^1$. Let $V = A_1$ as a vector space. Recall that $\pi_A: {\mathbf T}(V) \to A$ denotes the canonical algebra morphism, and let $I = \ker \, \pi_A$. Consider the commutative algebra $S = \Sym(V)^{\tsr d}$, where the algebra structure is the usual one on a $d$-fold tensor product of algebras. Let $R = \bigoplus_{i \geq 0} [\Sym(V)_i]^{\tsr d}$, and note that $R$ is a subalgebra of $S$. Define a $\k$-linear map
$$\iota: V^{\tsr d} \to R, \quad v_1 \tsr \cdots \tsr v_d \mapsto v_1 \tsr \cdots \tsr v_d.$$ 
We refer to $(V, I, S, R, \i)$ as the  \emph{multilinearization data} associated to $A$. We note that the multilinearization data of $A$ depends on $d$.

We define ${\mathbf U}_d$ on objects of ${\tt Alg}^1$ by
 ${\mathbf U}_d(A) = R/(\iota(I_d))$, where $(\iota(I_d))$ is the ideal of $R$ generated by the linear space $\iota(I_d)$. Note that ${\mathbf U}_d({\mathbf T}(V))=R$.

Let $f: A \to A'$ be a morphism in ${\tt Alg}^1$. Let $(V', I', S', R', \i')$ be the multilinearization data associated to $A'$.
Recall that we have ${\mathbf T}(f_1)(I) \subseteq I'$, where ${\mathbf T}(f_1): {\mathbf T}(V) \to {\mathbf T}(V')$. Applying the functor $\Sym$ to the linear map $f_1: V \to V'$ yields an algebra map $\Sym(f_1): \Sym(V) \to \Sym(V')$, and this determines an algebra map $\Sym(f_1)^{\tsr d}: R \to R'$. 
It is clear that the following diagram commutes.
$$
\xymatrix{
V^{\tsr d} \ar[r]^{\ \iota} \ar[d]_{{\mathbf T}(f_1)} & R \ar[d]^{\Sym(f_1)^{\tsr d}} \\
(V')^{\tsr d} \ar[r]^{\ \iota'} & R'\\
}
$$
Hence $\Sym(f_1)^{\tsr d}(\iota(I_d)) \subseteq \iota'(I'_d)$. It follows that $\Sym(f_1)^{\tsr d}$ induces an algebra map ${\mathbf U}_d(A) \to {\mathbf U}_d(A')$ and we define ${\mathbf U}_d(f)$ to be this map.

It is straightforward to check that ${\mathbf U}_d$ is a functor from ${\tt Alg}^1$ to ${\tt CAlg}$. The following lemma, needed later, has an easy proof, which we omit. 
\begin{lemma}\label{T_n preserves surjecttions}
Let $A$, $A'$ be objects of ${\tt Alg}^1$. Let $d \geq 2$. If $f: A \to A'$ is a surjective morphism in ${\tt Alg}^1$, then ${\mathbf U}_d(f): {\mathbf U}_d(A) \to {\mathbf U}_d(A')$ is a surjective morphism in ${\tt CAlg}$.
\end{lemma}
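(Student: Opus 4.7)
The plan is to reduce surjectivity of ${\mathbf U}_d(f)$ at the quotient level to surjectivity of the underlying map $\Sym(f_1)^{\tsr d}: R \to R'$, which in turn reduces to surjectivity of $f_1: V \to V'$ on degree-one pieces. Since $f: A \to A'$ is a morphism in ${\tt Alg}^1$ and both algebras are 1-generated, the degree-1 component $f_1: V \to V'$ is automatically surjective: $A'$ is generated by $V' = A'_1 = f(A_1) = f_1(V)$, and a graded algebra map between 1-generated algebras is surjective iff it is surjective in degree 1.

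Next I would verify that $\Sym(f_1)^{\tsr d}$ restricts to a \emph{surjective} map $R \to R'$. Because $\Sym$ is a functor from ${\tt Vect}$ to ${\tt CAlg}$ and $f_1$ is linear, $\Sym(f_1): \Sym(V) \to \Sym(V')$ is a graded algebra map, and its degree-$i$ component $\Sym(f_1)_i: \Sym(V)_i \to \Sym(V')_i$ is surjective (tensor powers and hence symmetric powers of surjective linear maps are surjective). Taking $d$-fold tensor products, the map $\Sym(f_1)_i^{\tsr d}: \Sym(V)_i^{\tsr d} \to \Sym(V')_i^{\tsr d}$ is surjective. Summing over $i$ shows that $\Sym(f_1)^{\tsr d}$ sends $R = \bigoplus_i \Sym(V)_i^{\tsr d}$ onto $R' = \bigoplus_i \Sym(V')_i^{\tsr d}$, and in particular the restriction $R \to R'$ is a well-defined surjective algebra map.

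Finally, the commutative square relating $\iota$, $\iota'$, ${\mathbf T}(f_1)$, and $\Sym(f_1)^{\tsr d}$ (displayed just before the lemma) shows that the restriction of $\Sym(f_1)^{\tsr d}$ to $R$ carries the ideal $(\iota(I_d))$ into $(\iota'(I'_d))$ and thus descends to ${\mathbf U}_d(f): R/(\iota(I_d)) \to R'/(\iota'(I'_d))$. Since $\Sym(f_1)^{\tsr d}: R \to R'$ is surjective and the composition $R \to R' \to {\mathbf U}_d(A')$ with the quotient map is therefore surjective, ${\mathbf U}_d(f)$ is surjective. There is no real obstacle here: the only point that requires any care is confirming that tensor powers of surjective linear maps remain surjective, which is standard.
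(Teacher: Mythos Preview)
Your argument is correct; the paper omits the proof entirely, calling it easy, so there is nothing to compare against. One small simplification: since $f$ is a surjective morphism of graded algebras it is surjective in every degree, so $f_1: V \to V'$ is surjective immediately---you need not invoke 1-generation for this step.
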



We now show that $\Proj {\mathbf U}_d(A)$ is indeed a model for the schemes $\U_d(A)$.

\begin{prop}
\label{model for Gd}
Let $A\in {\tt Alg}^1$ and let $d\ge 2$. Then  $\U_d(A) \cong \Proj {\mathbf U}_d(A)$.
\end{prop}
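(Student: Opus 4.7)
The plan is to invoke Lemma \ref{scheme of zeros as proj} with $S=R$, after realizing $\P^{\times d}$ as $\Proj R$ compatibly with the twisting sheaves $\O_{\P^{\times d}}(1)$ and $\O_{\Proj R}(1)$. First, I would construct an explicit isomorphism $\sigma:\P^{\times d}\xrightarrow{\cong} \Proj R$. This is the standard Segre/multigraded identification: the algebra $R=\bigoplus_i[\Sym_i V]^{\tsr d}$ is the $d$-fold Segre product of $\Sym V$ with itself, and $\sigma$ is characterized on distinguished affines by the ring isomorphisms
\[
R_{(f_1\tsr\cdots\tsr f_d)}\xrightarrow{\cong} \Sym(V)_{(f_1)}\tsr\cdots\tsr \Sym(V)_{(f_d)}, \qquad \frac{g_1\tsr\cdots\tsr g_d}{(f_1\tsr\cdots\tsr f_d)^n}\mapsto \frac{g_1}{f_1^n}\tsr\cdots\tsr \frac{g_d}{f_d^n},
\]
for $f_1,\ldots,f_d\in V\setminus\{0\}$. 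A parallel computation on the graded pieces $R(1)_{(f_1\tsr\cdots\tsr f_d)}$ shows that $\sigma^*\O_{\Proj R}(1)\cong \O_{\P^{\times d}}(1)$, and the induced map on global sections identifies $H^0(\Proj R,\O_{\Proj R}(1))$ with $H^0(\P^{\times d},\O_{\P^{\times d}}(1))\cong V^{\tsr d}$.

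The second step is to verify that, under these identifications, $\widetilde{r}=\z(R)(\iota(r))$ for every $r\in V^{\tsr d}$; in particular, every element of $\widetilde{I_d}$ lies in the image of $\z(R)_1$. Since $R_1=V^{\tsr d}$ and $\iota$ is just the inclusion of this component into $R$, this amounts to checking that $\z(R)_1:R_1\to H^0(\Proj R,\O_{\Proj R}(1))$ is the canonical isomorphism above. Using the description of $\z$ recalled just before Lemma \ref{naturality of four-term sequence}, this is a direct calculation on each affine chart $D_+(f_1\tsr\cdots\tsr f_d)$, where $\z(R)(r)$ restricts to the free-module map sending $1$ to $(r/(f_1\tsr\cdots\tsr f_d))\cdot(f_1\tsr\cdots\tsr f_d)$, matching the restriction of $\widetilde{r}$ under the K\"unneth-type identification.

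With these preliminaries in place, the conclusion follows immediately: choosing a basis $r_1,\ldots,r_m$ of $I_d$, the scheme $\U_d(A)$ is by definition the scheme of zeros of $\widetilde{r_1},\ldots,\widetilde{r_m}\in \im \z(R)_1$ in $\P^{\times d}\cong \Proj R$, and Lemma \ref{scheme of zeros as proj} yields
\[
\U_d(A)\cong \Proj R/\la\iota(r_1),\ldots,\iota(r_m)\ra=\Proj R/(\iota(I_d))=\Proj {\mathbf U}_d(A).
\]
The main obstacle is making precise the compatibility $\sigma^*\O_{\Proj R}(1)\cong\O_{\P^{\times d}}(1)$ and the resulting identification of global sections, which is the K\"unneth-type computation above; once that is secured, the remainder is essentially a bookkeeping exercise.
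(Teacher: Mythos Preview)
Your proposal is correct and follows essentially the same route as the paper: realize $\P^{\times d}\cong\Proj R$ compatibly with the $\O(1)$ sheaves, identify each $\widetilde r$ with $\z(R)_1(\iota(r))$, and then invoke Lemma~\ref{scheme of zeros as proj}. The paper simply cites this Segre/K\"unneth identification as a standard exercise (Hartshorne, Exercise~II.5.11) rather than spelling out the affine-chart computation you outline.
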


\begin{proof}
Let $f\in V^{\tsr d}$ and $\P=\P(V^*)$. We may view $f$ as a global section of $\O_{\P^{\times d}}(1):={\rm pr}_1^*\O_{\P}(1)\tsr\cdots\tsr {\rm pr}_d^*\O_{\P}(1)$ via the canonical isomorphisms
$$H^0(\P^{\times d}, \O_{\P^{\times d}}(1))=H^0(\P, \O_{\P}(1))^{\tsr d}=((V^*)^*)^{\tsr d}=V^{\tsr d}.$$
It is a standard exercise (e.g. \cite[Exercise II.5.11]{Hart}) to show that if $\P^{\times d}$ is realized as $\Proj R$, for $R = \bigoplus_{i \geq 0} \Sym(V)_i^{\tsr d}$ as above, then the global section determined by $f$ via the identifications above corresponds to the image of $f$ under the map
$\z(R)_1:V^{\tsr d} \to H^0(\P^{\times d}, \O_{\P^{\times d}}(1))$ from the four-term sequence (\ref{four-term-sequence}) following the proof of Lemma \ref{scheme of zeros kills global sections}. The result follows from Lemma \ref{scheme of zeros as proj}.

\end{proof}

 For use  in Section 6, we now give another description of the algebra ${\mathbf U}_d(A)$.  Fix an integer $d \geq 2$. 
 Let $A \in {\tt Alg}^1$ and let $(V, I, S, R, \i)$ be the multilinearization data associated to $A$. 
 Recall 
 that ${\mathbf U}_d(A) = R/(\iota(I_d))$.  


For $v \in V$ and $0 \leq i \leq d-1$, let $v_i = 1 \tsr \cdots \tsr v \tsr \cdots \tsr 1 \in S$ and $\overline{v}_i=(0,\ldots, v,\ldots,0)\in V^{\oplus d}$, where $v$ is in position $i$ in each case. Then the $\k$-linear map $V^{\oplus d}\to S$ given by $\overline{v}_i\mapsto v_i$ induces a graded algebra isomorphism $\Sym(V^{\oplus d})\cong S$. Thus if $\mathcal B$ is a basis for $V$, then $S$ is freely generated as a commutative algebra by $\{b_i\ :\ b\in \mathcal B, 0\le i\le d-1\}$. For $v\in V$, the map $$\s(v_i) = \begin{cases} v_{i+1} \quad & 0 \leq i \leq d-2 \\ 0 \quad & i = d-1 .\end{cases}$$ is $\k$-linear, hence it determines a algebra endomorphism $\s: S \to S$.
Define an $\N^d$ grading on $S$ by $\deg(v_i) = (0, \ldots, 1, \ldots, 0)=:e_i$, where $1$ is in position $i$. 
Notice that $R$ is the subalgebra of $S$ generated by $S_{e_0+\cdots+e_{d-1}}=S_{(1, \ldots, 1)}$. 

For $2 \leq n \leq d$, define a $\k$-linear map $$\iota_n: V^{\tsr n} \to S, \quad v_1 \tsr \cdots \tsr v_n \mapsto v_1 \tsr \cdots \tsr v_n \tsr 1 \cdots \tsr 1.$$ Observe that $\iota = \iota_d$. 

\begin{defn}\label{multilinearization ideal}
Let $d \geq 2.$ If $(V, I, S, R, \i)$ is the  multilinearization data associated to $A\in {\tt Alg}^1$, we call the ideal 
of $S$ generated by the subspaces $$\s^m(\iota_n(I_n)), \quad 2 \leq n \leq d, \quad 0 \leq m \leq d-n$$
the \emph{multilinearization of $I$ in $S$}.
\end{defn}
We note that elements of the subspace $\s^m(\iota_n(I_n))$ are homogeneous of degree $e_{m,n}:=e_m+\cdots+e_{m+n-1}$.

\begin{lemma}\label{J = K cap R} Let $d\ge 2$ and $A\in {\tt Alg}^1$ with  multilinearization data $(V, I, S, R, \i)$. If $K$ is the multilinearization ideal of $I$ in $S$ and $J$ is the ideal of $R$ generated by $\iota(I_d)$, then $K \cap R=J$.
\end{lemma}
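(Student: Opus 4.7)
My plan is to prove the two containments separately. The direction $J \subseteq K \cap R$ is immediate: $\iota(I_d) = \iota_d(I_d) = \s^0(\iota_d(I_d))$ is among the generators of $K$ (the case $m=0$, $n=d$) and lies in $R$, so the ideal it generates in $R$ is contained in $K \cap R$.

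For the reverse containment $K \cap R \subseteq J$, the plan is to exploit the $\N^d$-grading on $S$ described just before Definition \ref{multilinearization ideal}. Each generator $\s^m(\iota_n(r))$ of $K$ is multi-homogeneous of degree $e_{m,n}$, so $K$ is an $\N^d$-homogeneous ideal of $S$, and $R = \bigoplus_{\ell \geq 0} S_{(\ell,\ldots,\ell)}$ is also multi-graded. Hence $K \cap R = \bigoplus_{\ell \geq 0} (K \cap R)_{(\ell,\ldots,\ell)}$, and it suffices to treat each multi-degree $(\ell, \ldots, \ell)$ separately. The case $\ell = 0$ is vacuous since every multi-homogeneous generator of $K$ has at least two nonzero coordinates in its multi-degree (because $n \geq 2$), forcing $K_{(0,\ldots,0)} = 0$.

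For $\ell \geq 1$, I would write $x \in (K \cap R)_{(\ell,\ldots,\ell)}$ as $x = \sum_k s_k \cdot \s^{m_k}(\iota_{n_k}(r_k))$ with each summand multi-homogeneous of degree $(\ell,\ldots,\ell)$. Then $s_k$ must have multi-degree $\ell-1$ in positions $P_k = \{m_k, \ldots, m_k+n_k-1\}$ and multi-degree $\ell$ in the $d-n_k$ remaining positions. The key step is to use the factorization $\Sym(V)_\ell = V \cdot \Sym(V)_{\ell-1}$ (which requires $\ell \geq 1$) in each position outside $P_k$ to rewrite
\[
s_k = \sum \Bigl(\prod_{i \notin P_k} (v_i)_i\Bigr) \Bigl(\prod_{i=0}^{d-1} (t_i)_i\Bigr),
\]
with each $v_i \in V$ and each $t_i \in \Sym(V)_{\ell-1}$. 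Then $s_k \cdot \s^{m_k}(\iota_{n_k}(r_k))$ splits correspondingly as a sum of products of two factors. The second factor $\prod_i (t_i)_i$ has multi-degree $(\ell-1,\ldots,\ell-1)$, so it lies in $R$. The first factor $\prod_{i \notin P_k} (v_i)_i \cdot \s^{m_k}(\iota_{n_k}(r_k))$ unfolds directly to $\iota_d(w)$, where $w \in V^{\tsr d}$ is obtained by placing $r_k$ in positions $P_k$ and the $v_i$ in the remaining positions. Since $I$ is a two-sided ideal of $\mathbf{T}(V)$, $w$ lies in $V^{\tsr m_k} \tsr I_{n_k} \tsr V^{\tsr (d-m_k-n_k)} \subseteq I_d$. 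Hence this factor is in $\iota(I_d)$ and each summand is in $J$.

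The main obstacle is not conceptual but careful bookkeeping: tracking multi-degree positions, expanding each $s_k$ via the factorization above, and verifying that the resulting ``head'' equals $\iota_d(w)$ for a specific $w \in I_d$. Conceptually, the $\N^d$-grading combined with $\ell \geq 1$ provides exactly one spare copy of $V$ in each position outside $P_k$, which can be absorbed into $\iota_n(I_n)$ to produce an $\iota_d(I_d)$-element via the two-sided ideal property of $I$.
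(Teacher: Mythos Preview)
Your proof is correct and follows essentially the same approach as the paper's: both use the $\N^d$-grading to reduce to multi-degree $(\ell,\ldots,\ell)$, then factor $\Sym(V)_\ell = V \cdot \Sym(V)_{\ell-1}$ in each position outside the support $P_k$ of the relation to absorb those $V$-factors into $\iota_d(I_d)$ (via the two-sided ideal property of $I$), leaving a factor in $R_{\ell-1}$. The paper writes this as a chain of subspace equalities $\s^m(\iota_n(I_n)) \cdot_S S_{(\ell,\ldots,\ell)-e_{m,n}} = \s^m(\iota_n(I_n)) \cdot_S (V^{\tsr m} \tsr \k^{\tsr n} \tsr V^{\tsr d-(m+n)}) \cdot_S R_{\ell-1} \subseteq \iota_d(I_d) \cdot_S R_{\ell-1}$, while you unpack the same computation element by element.
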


\begin{proof} 
It is obvious that $J \subseteq K \cap R$. To prove the other containment, we use the fact that  $S$ is graded by $\N^d$; note that $K$, $R$ and $J$ are all homogeneous with respect to this grading.
 It suffices to prove that $(K \cap R)_{(i, \ldots, i)} \subseteq J$ for all $i \geq 1$. Denoting multiplication in $S$ by $\cdot_S$, we have  
\begin{align*}
(K \cap R)_{(i, \ldots, i)} &= \sum_{n = 2}^d \sum_{m = 0}^{d-n} \s^m(\iota_n(I_n)) \cdot_S S_{(i,\ldots, i)-e_{m,n}} \\
&= \sum_{n = 2}^d \sum_{m = 0}^{d-n} \s^m(\iota_n(I_n)) \cdot_S \left(V^{\tsr m} \tsr \k^{\tsr n} \tsr V^{\tsr d-(m+n)}\right) \cdot_S R_{i-1} \\
&= \sum_{n = 2}^d  \left[\iota_d\left(\sum_{m=0}^{d-n} V^{\tsr m} \tsr I_n \tsr V^{d-(m+n)}\right)\cdot_S R_{i-1}\right] \\
&\subseteq  \sum_{n = 2}^d \iota(I_d) \cdot_S R_{i-1} \\
&\subseteq J.
\end{align*}

\end{proof}

Let $S' = \bigoplus_{i \geq 0} (S/K)_{(i, \ldots, i)}$. Then $S'$ is $\N$-graded by setting $S'_i = (S/K)_{(i, \ldots, i)}$. Furthermore, note that $S'$ is generated in degree $1$.

\begin{prop}\label{alt pres}
There is an isomorphism of graded algebras ${\mathbf U}_d(A) \cong S'$.
\end{prop}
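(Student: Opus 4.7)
The plan is to realize the isomorphism as the natural map induced by the composite $R \hookrightarrow S \twoheadrightarrow S/K$, where $K$ is the multilinearization ideal of $I$ in $S$.

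First, I would unpack the $\N^d$-grading. Since $K$ is generated by elements of $S$ that are homogeneous of some degree $e_{m,n}=e_m+\cdots+e_{m+n-1}$, the ideal $K$ is homogeneous for the $\N^d$-grading, and its $(i,\ldots,i)$-component is $K_{(i,\ldots,i)}\subseteq S_{(i,\ldots,i)}=R_i$. Similarly $R=\bigoplus_{i\ge 0}S_{(i,\ldots,i)}$ is the subalgebra of ``diagonal'' elements, so $K\cap R=\bigoplus_{i\ge 0}K_{(i,\ldots,i)}$. Combining this with Lemma \ref{J = K cap R} gives $J_i=K_{(i,\ldots,i)}$ for each $i\ge 0$, where $J=(\iota(I_d))$ is the ideal appearing in the definition ${\mathbf U}_d(A)=R/J$.

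Next, I would consider the composite algebra homomorphism $\psi\colon R\hookrightarrow S\twoheadrightarrow S/K$. In degree $i$, the map sends $R_i=S_{(i,\ldots,i)}$ into $(S/K)_{(i,\ldots,i)}$, so the image of $\psi$ is contained in $S'=\bigoplus_{i\ge 0}(S/K)_{(i,\ldots,i)}$. In fact the induced map $R_i\to (S/K)_{(i,\ldots,i)}=S_{(i,\ldots,i)}/K_{(i,\ldots,i)}$ is surjective (it is the quotient by $K_{(i,\ldots,i)}\subseteq R_i$), so $\psi$ surjects onto $S'$. The kernel of $\psi$ is $K\cap R=J$ by the previous paragraph, so $\psi$ descends to an isomorphism of graded $\k$-vector spaces $\bar\psi\colon R/J\xrightarrow{\cong} S'$.

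Finally, since $\psi$ is an algebra homomorphism (composition of an inclusion of subalgebras with a quotient), $\bar\psi$ is an isomorphism of graded $\k$-algebras ${\mathbf U}_d(A)=R/J\cong S'$, as desired. I do not anticipate a serious obstacle here: all the real work has been done in Lemma \ref{J = K cap R}, and the present proposition is essentially the observation that the graded decompositions of $R$ and $S/K$ along the ``diagonal'' $\N^d$-degrees match up. The only point requiring care is confirming that $S'$ inherits its algebra structure as a graded subalgebra of $S/K$, and that its grading coincides with the one induced from $R/J$ under $\bar\psi$.
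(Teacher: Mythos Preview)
Your proof is correct and follows essentially the same approach as the paper: both use Lemma \ref{J = K cap R} to show that the natural map $R/J \to S/K$ is injective with image exactly $S'$. Your version is slightly more explicit about the $\N^d$-graded bookkeeping establishing surjectivity onto $S'$, but the argument is the same.
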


\begin{proof}
By definition, ${\mathbf U}_d(A) = R/J$,  where $J$ is the ideal of $R$ generated by $\iota(I_d)$. Lemma \ref{J = K cap R} implies that the map $$R/J \to S/K, \quad r + J \mapsto r+K$$ is injective. Then the result follows by observing that the image of this map is $S'$. 

\end{proof}

\subsection{The functor ${\mathbf U}_d$ on commutative algebras}~\\

Let $A \in {\tt Alg}^1$ be a commutative algebra. Then we may write $A = {\mathbf T}(V)/(I+J)$, where $V = A_1$,  
$I=\la [V,V]\ra$ is the commutator ideal,
and $J$ is a homogeneous  ideal in the tensor algebra ${\mathbf T}(V)$. Though $J$ is not uniquely determined in this context, the degree(s) of the generators of $J/(I\cap J)$ are.   For any $d \geq 1$, let $A_{(d)} = \oplus_{n = 0}^{\infty} A_{nd}$ denote the $d$-th Veronese subalgebra of $A$. 

\begin{thm}\label{functor on commutative}
 Let $A \in {\tt Alg}^1$ be commutative and let $V=A_1$. Let $J\subset \mathbf{T}(V)$ be any homogeneous ideal such that $A={\mathbf T}(V)/(I+J)$, where $I=\la [V,V]\ra$, and such that $J$ is generated as an ideal by $J_{\le d}$ for $d\ge 2$. Then there is an isomorphism of $\N$-graded algebras ${\mathbf U}_d(A) \cong A_{(d)}$. 
\end{thm}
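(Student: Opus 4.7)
The plan is to first establish the theorem in the free case $A=\Sym(V)$, proving ${\mathbf U}_d(\Sym(V))\cong \Sym(V)_{(d)}$, and then to deduce the general case by functoriality. Writing $A=\Sym(V)/\bar J$ for the image $\bar J$ of $J$ in $\Sym(V)$, Lemma \ref{T_n preserves surjecttions} gives a surjection ${\mathbf U}_d(\Sym(V))\to {\mathbf U}_d(A)$ which, after identification with a surjection $\Sym(V)_{(d)}\to {\mathbf U}_d(A)$, will have kernel $\bar J_{(d)}=\bigoplus_i \bar J_{di}$, yielding the result.

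For the free case I use the alternative presentation of Proposition \ref{alt pres}: ${\mathbf U}_d(\Sym(V))\cong S'$ with $S'_i=(S/K)_{(i,\ldots,i)}$, where $S=\Sym(V)^{\tsr d}$ and $K$ is the multilinearization of $I=\langle [V,V]\rangle$ in $S$. The multiplication map $\mu:S\to \Sym(V)$, $a_1\tsr\cdots\tsr a_d\mapsto a_1 a_2\cdots a_d$, is a graded surjection that kills every generator $\s^m(\iota_n(r))$ of $K$: indeed $\mu(\s^m(\iota_n(r)))$ is the image of $r\in I_n$ under $V^{\tsr n}\to \Sym(V)_n$, which is zero. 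Hence $\mu$ descends to $\bar\mu:(S/K)_{(i,\ldots,i)}\to \Sym(V)_{di}$, which is clearly surjective. To prove injectivity I construct a one-sided inverse $\phi_i:\Sym(V)_{di}\to (S/K)_{(i,\ldots,i)}$ sending any product $m_0 m_1\cdots m_{d-1}$ with $m_j\in \Sym(V)_i$ to the class of $(m_0)_0(m_1)_1\cdots (m_{d-1})_{d-1}$. The crux is showing that $\phi_i$ is independent of the chosen factorization: any two factorizations of $m\in \Sym(V)_{di}$ differ by a permutation of the $di$ basis vectors among the $d$ slots, which one reduces to adjacent transpositions. Swaps within a single slot are invisible in $S$ by commutativity, while a swap of $v\in V$ in slot $j$ with $w\in V$ in slot $j+1$ is precisely the defining relation $v_j w_{j+1}\equiv w_j v_{j+1}\pmod{K}$. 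This well-definedness verification is the main technical step of the argument.

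For the reduction to the general case: under the isomorphism $\Sym(V)_{(d)}\cong {\mathbf U}_d(\Sym(V))=R/\langle \iota(I_d)\rangle$, the kernel of the surjection ${\mathbf U}_d(\Sym(V))\to {\mathbf U}_d(A)=R/\langle \iota((I+J)_d)\rangle$ corresponds under $\mu$ to the ideal of $\Sym(V)_{(d)}$ generated by $\mu(\iota(J_d))=\bar J_d$. Since $\bar J$ is generated as an ideal of $\Sym(V)$ in degrees $\le d$, a direct factorization argument shows that for each $i\ge 1$, every element of $\bar J_{di}$ can be written as a product of an element of $\bar J_d$ with an element of $\Sym(V)_{d(i-1)}$; hence the ideal of $\Sym(V)_{(d)}$ generated by $\bar J_d$ is exactly $\bar J_{(d)}$. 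We conclude ${\mathbf U}_d(A)\cong \Sym(V)_{(d)}/\bar J_{(d)}=A_{(d)}$.
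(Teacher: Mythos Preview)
Your proof is correct, and it takes a genuinely different route from the paper's argument. The paper works directly with general $A$: it constructs the surjection $\mu:{\mathbf U}_d(A)\to A_{(d)}$ as you do, but for the inverse direction it invokes the Eisenbud--Reeves--Totaro presentation of $A_{(d)}$ as a quotient of $\Sym(V^{\tsr d}/I_d)$, checks that the defining relations of that presentation die in ${\mathbf U}_d(A)$, and thereby obtains a map $\Sigma:A_{(d)}\to {\mathbf U}_d(A)$ with $\mu\circ\Sigma=\mathrm{id}$. Your approach instead isolates the free case $A=\Sym(V)$, handles it by an explicit hand-built inverse using Proposition~\ref{alt pres}, and then reduces the general case by functoriality (Lemma~\ref{T_n preserves surjecttions}) together with the elementary observation that $\bar J_{di}=\Sym(V)_{d(i-1)}\bar J_d$ when $\bar J$ is generated in degrees $\le d$. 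The payoff of your route is that it is entirely self-contained---no external presentation of Veronese subalgebras is needed---and it makes transparent exactly where the hypothesis on the generating degree of $J$ enters (only in the reduction step, to identify the kernel with $\bar J_{(d)}$). The paper's route is more uniform in that it never singles out the free case, at the cost of importing a nontrivial presentation result.

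Two small remarks on exposition. First, your ``one-sided inverse'' $\phi_i$ is in fact two-sided once well-definedness is established: since $(S/K)_{(i,\ldots,i)}$ is spanned by classes of pure tensors $m_0\tsr\cdots\tsr m_{d-1}$ with each $m_j\in\Sym(V)_i$, and $\phi_i$ applied to $\bar\mu$ of such a class (using the given factorization) returns the class itself, you get $\phi_i\circ\bar\mu=\mathrm{id}$ immediately---this is the direction you actually need for injectivity of $\bar\mu$. Second, to make the well-definedness argument airtight it is cleanest to define $\phi_i$ first as the $\k$-linear map $V^{\tsr di}\to (S/K)_{(i,\ldots,i)}$ sending $v_{(1)}\tsr\cdots\tsr v_{(di)}$ to the class of $(v_{(1)}\cdots v_{(i)})\tsr\cdots\tsr(v_{(d(i-1)+1)}\cdots v_{(di)})$, and then observe that your adjacent-transposition argument shows this map is $\mathcal{S}_{di}$-invariant, hence descends to $\Sym(V)_{di}$.
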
 

\begin{proof}
Let $d \geq 2$ be as in the statement of the theorem. Let $T = \Sym(V^{\tsr d})$ and $R = \bigoplus_{i \geq 0} \Sym(V)_i^{\tsr d}$. Define $\k$-linear maps 
\begin{align*}
&\a: V^{\tsr d} \to T, \quad v_1 \tsr \cdots \tsr v_d \mapsto v_1 \tsr \cdots \tsr v_d, \\
&\iota: V^{\tsr d} \to R, \quad v_1 \tsr \cdots \tsr v_d \mapsto v_1 \tsr \cdots \tsr v_d.
\end{align*}
The universal property of the functor $\Sym$ affords a algebra morphism $\Sigma: T \to R$ such that $\iota = \Sigma \circ \a$. 

Consider the map $$\mu: R \to \Sym(V), \quad f_1 \tsr \cdots \tsr f_d \mapsto f_1 \cdots f_d.$$ It is easy to check that $\mu$ is a well-defined algebra homomorphism. Identify $A$ with $\Sym(V)/J$ and let $\pi: \Sym(V) \to A$ be the canonical quotient map. Then, composing $\pi$ with $\mu$ and abusing notation, we have a map $\mu: R \to A$. Observe that $\mu: R \to A$ surjects onto $A_{(d)}$. It is well known that $$I_d = \text{span}_{\k}\{v_1 \tsr \cdots \tsr v_d - v_{\s(1)} \tsr \cdots \tsr v_{\s(d)} : v_i \in V, \  \s \in \mathcal{S}_d\},$$ where $\mathcal{S}_d$ denotes the $d$-th symmetric group. Clearly $I_d$ is contained in the kernel of $\mu: R \to \Sym(V)$, and also $J_d$ is contained in the kernel of $\mu: R \to A$. Recalling that ${\mathbf U}_d(A) = R/(\iota(I_d+J_d))$, we see that there is an induced surjective algebra homomorphism $\mu: {\mathbf U}_d(A) \to A_{(d)}$.

Now we will show that $\Sigma: T \to R$ induces a surjective algebra homomorphism $\Sigma: A_{(d)} \to {\mathbf U}_d(A)$. To that end, consider the map $q \circ \Sigma: T \to {\mathbf U}_d(A)$, where $q: R \to {\mathbf U}_d(A)$ is the canonical quotient map. The ideal of $T$ generated by $\alpha(I_d)$ is in the kernel of $q \circ \Sigma$, hence there is an induced map $\Sigma: T/(\alpha(I_d)) \to {\mathbf U}_d(A)$. Consider the short exact sequence of vector spaces: 

$$
\xymatrix{
& 0  \ar[r] & I_d \ar[r] & V^{\tsr d} \ar[r] & V^{\tsr d}/I_d \ar[r] & 0 .\\
}
$$
It is well known, \cite[III.6, Proposition 4]{Bourbaki}, that the map $\Sym(V^{\tsr d}) \to \Sym(V^{\tsr d}/I_d)$ is surjective and $$\ker(\Sym(V^{\tsr d}) \to \Sym(V^{\tsr d}/I_d)) = (\alpha(I_d)).$$ Hence $$\Sym(V^{\tsr d}/I_d) \cong \Sym(V^{\tsr d})/(\alpha(I_d)).$$ So we identify $S/(\alpha(I_d))$ with $\Sym(V^{\tsr d}/I_d)$, and under this identification there is a surjective algebra homomorphism $\Sigma: \Sym(V^{\tsr d}/I_d) \to {\mathbf U}_d(A)$. We will abuse notation and write $v_1 \tsr \cdots \tsr v_d \in \Sym(V^{\tsr d}/I_d)$ for $v_i \in V$ with the understanding that  $v_1 \tsr \cdots \tsr v_d$ is a symmetric tensor. We denote multiplication in the ring $\Sym(V^{\tsr d}/I_d)$ by $\ast$. 

In \cite[p.\ 72]{Eisenbud-Reeves-Totaro} the authors give a presentation of the Veronese subalgebra $A_{(d)}$ of $A$ using coordinates. In our coordinate-free description, this presentation of $A_{(d)}$ is $\Sym(V^{\tsr d}/I_d)/K$, where $K$ is the ideal of $\Sym(V^{\tsr d}/I_d)$ generated by the following:
\begin{align*}
& (1) \text{ the spaces } J_i \tsr V^{\tsr d-i}, \ 2 \leq i \leq d, \\
& (2) \text{ for all }  v_i \in V, \ \s \in \mathcal{S}_{2d}, \text{ the elements}\\
&  (v_1 \tsr \cdots \tsr v_d) \ast (v_{d+1} \tsr \cdots \tsr v_{2d}) - (v_{\s(1)} \tsr \cdots \tsr v_{\s(d)}) \ast (v_{\s(d+1)} \tsr \cdots \tsr v_{\s(2d)}).
\end{align*}
We claim that $K$ is contained in $\ker(\Sigma: \Sym(V^{\tsr d}/I_d) \to {\mathbf U}_d(A))$. Firstly, note that $J_i \tsr V^{\tsr d-i}$ is a subset of the degree $1$ component of $\Sym(V^{\tsr d}/I_d)$ and $\Sigma: \Sym(V^{\tsr d}/I_d) \to {\mathbf U}_d(A)$ kills this space because ${\mathbf U}_d(A) = R/(\iota(I_d+J_d))$. Now consider some element 
$$b = (v_1 \tsr \cdots \tsr v_d)*(v_{d+1} \tsr \cdots \tsr v_{2d}) - (v_{\s(1)} \tsr \cdots \tsr v_{\s(d)})*(v_{\s(d+1)} \tsr \cdots \tsr v_{\s(2d)})$$
of $\Sym(V^{\tsr d}/I_d)$, $\s \in \mathcal{S}_{2d}$. Using the fact that the tensors here are symmetric, we may assume that in the term $(v_{\s(1)} \tsr \cdots \tsr v_{\s(d)})*(v_{\s(d+1)} \tsr \cdots \tsr v_{\s(2d)})$
that for all $1 \leq i \leq d$, $v_i$ occurs in position $i$ inside its factor: either $v_\s(1) \tsr \cdots \tsr v_\s(d)$ or $v_{\s(d+1)} \tsr \cdots \tsr v_{\s(2d)}$. Then, as an element of $R/(\iota(I_d+J_d))$, 
\begin{align*} 
\Sigma(b) &= (v_1 v_{d+1} \tsr \cdots \tsr v_d v_{2d} - v_{\s(1)} v_{\s(d+1)} \tsr \cdots \tsr v_{\s(d)} v_{\s(2d)}) + (\iota(I_d+J_d)) \\
&= (v_1 \tsr \cdots \tsr v_d) \cdot [v_{d+1} \tsr \cdots \tsr v_{2d} - v_{\tau(d+1)} \tsr \cdots \tsr v_{\tau(2d)}] + (\iota(I_d+J_d)),
\end{align*}
where $\cdot$ denotes the multiplication in $R$ and $\tau$ is some permutation of the set $\{d+1, ... , 2d\}$. Now note that $$v_1 \tsr \cdots \tsr v_d \cdot [v_{d+1} \tsr \cdots \tsr v_{2d} - v_{\tau(d+1)} \tsr \cdots \tsr v_{\tau(2d)}]$$ is an element of the ideal of $R$ generated by $\iota(I_d)$, and therefore $\Sigma(b) = 0$.

We have proved that the ideal $K$ is contained in the kernel of the homomorphism $\Sigma: \Sym(V^{\tsr d}/I_d) \to {\mathbf U}_d(A)$, so we obtain a map $\Sigma: \Sym(V^{\tsr d}/I_d)/K \to {\mathbf U}_d(A).$ Using \cite{Eisenbud-Reeves-Totaro} we know that there is an algebra isomorphism $A_{(d)} \cong \Sym(V^{\tsr d}/I_d)/K$. Hence we have an algebra map $\Sigma: A_{(d)} \to {\mathbf U}_d(A)$. Finally, consider the sequence of surjective algebra maps: 
$$
\xymatrix{
& A_{(d)} \ar[r]^{\Sigma} & {\mathbf U}_d(A) \ar[r]^{\mu} & A_{(d)}.\\
}
$$

It is clear that the composite $\mu \circ \Sigma$ is an isomorphism in degree $1$, so $\mu \circ \Sigma$ is the identity map on $A_{(d)}$. So $\Sigma$ is injective. Therefore $\Sigma: A_{(d)} \to {\mathbf U}_d(A)$ is an algebra isomorphism.

%
%
%
%
%
%
%

\end{proof}

\section{Relating the algebras $A$ and  $\mbfB(\U_A)$}
\label{structure}


In this section we begin by defining an algebra morphism $\t_A: A \to \mbfB(\U_A)$. We then characterize when $\t_A$ is injective or surjective in terms of certain local cohomology modules. Finally, we study $\ker \, \t_A$ in terms of annihilator ideals of truncated point modules.

For an algebra $A \in {\tt Alg}^1$ with $\dim A_1=n+1$, recall the definition of the truncated point system of $A$, $(\U_A, i)\in {\tt PSys}^{n}$ from Definition \ref{special systems}. Let $\mbfB(A) = \mbfB(\U_A)$. 

\begin{prop}[\cite{ATVI}]
\label{map A to B}
If $A\in {\tt Alg}^1$, 
then there is a graded algebra morphism $\t_A:A\to \mbfB(A)$, which is bijective in degree 1.
\end{prop}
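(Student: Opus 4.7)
The plan is to realize $\t_A$ as the map induced on $A = {\mathbf T}(V)/I$ by applying the contravariant functor $\mbfB$ to a natural morphism $(\U_A, i) \to (\Phi^n, {\rm id})$ in ${\tt PSys}^n$, where $V = A_1$ and $I = \ker \pi_A$.

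First I would verify that the closed immersions $i_d : \U_d(A) \to \P^{\times d}$ assemble into a morphism in ${\tt Sys}$, and hence define a morphism $(\U_A,i) \to (\Phi^n, {\rm id})$ in ${\tt PSys}^n$. The compatibility ${\rm pr}^{(d)}_{ij} \circ i_d = i_{j-i+1} \circ \pi^{(d)}_{ij}$ required of a morphism in ${\tt Sys}$ is exactly the commutative square displayed at the end of Section \ref{Tps}; indeed, this square was used to define $\pi^{(d)}_{ij}$. The condition ${\rm id}\circ i = i$ needed for a morphism in ${\tt PSys}^n$ is automatic.

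Applying the contravariant functor $\mbfB$ then produces a graded algebra morphism $\mbfB(i) : \mbfB(\Phi^n, {\rm id}) \to \mbfB(\U_A, i) = \mbfB(A)$, whose source, by Example \ref{free algebra}, is the tensor algebra ${\mathbf T}(V)$. To factor through $\pi_A$ and obtain $\t_A$, it suffices to show $\mbfB(i)(I) = 0$. Since $I$ is a homogeneous subspace of ${\mathbf T}(V)$ with $I_0 = I_1 = 0$, the content is the vanishing $\mbfB(i)_d(I_d) = 0$ for every $d \geq 2$. In such a degree, $\mbfB(i)_d$ is the pullback $\widehat{i}_d : H^0(\P^{\times d}, \O_{\P^{\times d}}(1)) \to H^0(\U_d(A), \mcL_d)$, and under the canonical identification $H^0(\P^{\times d}, \O_{\P^{\times d}}(1)) \cong V^{\tsr d}$ the subspace $\widetilde{I_d}$ is, by construction, the space whose scheme of zeros on a basis is $\U_d(A)$. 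Applying Lemma \ref{scheme of zeros kills global sections} to each chosen basis element therefore gives $\widehat{i}_d(\widetilde r) = 0$ for $r \in I_d$, so $\mbfB(i)_d$ annihilates $I_d$.

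Passing to the quotient yields the desired $\t_A : A \to \mbfB(A)$. For bijectivity in degree $1$, the hypothesis $I_1 = 0$ forces $\U_1(A) = \P$ and $i_1 = {\rm id}_\P$, so $(\t_A)_1$ is the identity on $V$ under the identifications $A_1 = V = H^0(\P, \O_\P(1)) = \mbfB(A)_1$. The only substantive step is the vanishing argument, which is packaged cleanly by Lemma \ref{scheme of zeros kills global sections}; the rest is functorial bookkeeping already set up in the preceding sections.
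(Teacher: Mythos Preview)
Your proof is correct and follows essentially the same route as the paper's: apply the functor $\mbfB$ to the inclusion $(\U_A,i)\to(\Phi^n,{\rm id})$ in ${\tt PSys}^n$, identify the source as ${\mathbf T}(V)$ via Example~\ref{free algebra}, kill $I$ using Lemma~\ref{scheme of zeros kills global sections}, and note $i_1$ is an isomorphism. The only difference is that you spell out why the $i_d$ assemble into a morphism of systems, whereas the paper has already recorded this in Definition~\ref{special systems} and simply invokes it.
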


\begin{proof}
Let $i:\U_A\to \Phi^n$ be the inclusion morphism in ${\tt PSys}^n$. By functoriality of $\mbfB$ and Example \ref{free algebra}, there is a graded algebra morphism $\mbfB(i):{\mathbf T}(A_1)\to \mbfB(A)$. It follows from Lemma \ref{scheme of zeros kills global sections} that $I$ =  $\ker(\pi_A: {\mathbf T}(A_1) \to A)$ is contained in $\ker \,\mbfB(i)$. Hence there is an induced graded algebra morphism $\t_A:A\to \mbfB(A)$, as desired.

Recall that since $A\in {\tt Alg}^1$, we have $I=I_{\ge 2}$ and hence $\U_1(A)=\P$. The inclusion $i_1:\U_1(A)\to \P$ is an isomorphism, hence $(\t_A)_1$ is an isomorphism.

\end{proof}

As illustrated in Section \ref{examples}, Example 3, the map $\t_A$ need not be injective nor surjective. Indeed, $\mbfB(A)$ need not be 1-generated. In a subsequent paper, we will clarify the relationship between $\mbfB(A)$ and the \emph{twisted homogeneous coordinate ring} associated to a ``triple'' $(E,\s,\mcL)$ consisting of a projective scheme $E$, a scheme automorphism $\s:E\to E$, and a line bundle $\mcL\in \mcO_E$-mod. As a consequence, we will show that when $A$ is a quadratic Artin-Schelter regular algebra of dimension 3, the twisted homogeneous coordinate ring of the triple associated to $A$ as in \cite{ATVI} is isomorphic to $\mbfB(A)$. In particular, $\t_A$ is always surjective when $A$ is AS-regular of dimension 3.

 Let $R={\mathbf U}_d({\mathbf T}(V))$ and recall from Section \ref{Td} the obvious map $\i:V^{\tsr d}\to R$. By definition of ${\mathbf U}_d$, this map induces a $\k$-linear isomorphism $\widetilde{\i}:A_d\to {\mathbf U}_d(A)_1$. Consider the following diagram, in which we have identified $\P^{\times d}\cong \Proj R$ and denoted the multilinearization isomorphism by $\widetilde{\cdot}:V^{\tsr d}\to H^0(\P^{\times d}, \O_{\P^{\times d}}(1))$.
$$
\xymatrix{
R_1 \ar[rr]^(0.4){\z(R)_1}\ar[ddd]_{{\mathbf U}_d(\pi_A)_1} && H^0(\P^{\times d}, \O_{\P^{\times d}}(1))\ar[ddd]^{\mbfB(\n)_d}\\
& V^{\tsr d}\ar[ul]^{\i}\ar[d]^{(\pi_A)_d}\ar[ur]^(0.4){\widetilde{\cdot}} & \\
& A_d\ar[rd]^{(\t_A)_d}\ar[ld]_{\widetilde{\i}} &\\
{\mathbf U}_d(A)_1\ar[rr]^(0.45){\z({\mathbf U}_d(A))_1}& & H^0(\U_d(A), \mcL_d)
}
$$
The outer square commutes by Lemma \ref{naturality of four-term sequence}. The left trapezoid commutes by properties of quotients, and the right trapezoid commutes by Proposition \ref{map A to B}. The upper triangle commutes by the discussion in the proof of Proposition \ref{model for Gd}. Since all vertical maps are zero on $I_d$, the lower triangle commutes as well.

\begin{thm}
\label{kernel-image via local cohomology}
Let $A\in {\tt Alg}^1$. For all $d\ge 2$, $\widetilde{\i}(\ker(\tau_A)_d)=H^0_{\mf m}({\mathbf U}_d(A))_1$, where ${\mf m}={\mathbf U}_d(A)_+$.
The map $(\t_A)_d$ is surjective if and only if $H^1_{\mf m}({\mathbf U}_d(A))_1=0$. Consequently,
\begin{enumerate}
\item $\t_A$ is injective if and only if $H^0_{\mf m}({\mathbf U}_d(A))_1=0$ for all $d\ge 2$,
\item $\mbfB(A)$ is 1-generated if and only if $H^1_{\mf m}({\mathbf U}_d(A))_1=0$ for all $d\ge 2$.
\end{enumerate}
In particular, if $\depth\ {\mathbf U}_d(A)\ge 2$ for all $d\ge 2$, then $A\cong  \mbfB(A)$.
\end{thm}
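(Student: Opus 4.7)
The proof plan is to apply the four-term exact sequence (\ref{four-term-sequence}) to the commutative graded algebra $S = {\mathbf U}_d(A)$ and then restrict to the degree-$1$ component. First, I would observe that the algebra $R = \bigoplus_{i \ge 0} \Sym(V)_i^{\tsr d}$ is a finitely generated commutative $\k$-algebra (it is generated in degree $1$ by the image of any basis of $V^{\tsr d}$), hence Noetherian, so ${\mathbf U}_d(A)$ is Noetherian as a quotient of $R$. This means the four-term sequence (\ref{four-term-sequence}) is available.

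By Proposition \ref{model for Gd}, there is an identification $\Proj S \cong \U_d(A)$ under which the twisting sheaf $\O(1)$ corresponds to $\mcL_d = i_d^*\O_{\P^{\times d}}(1)$; hence $\G_*(\widetilde{S})_1 \cong H^0(\U_d(A), \mcL_d) = \mbfB(A)_d$. Extracting the degree-$1$ component of (\ref{four-term-sequence}) yields the exact sequence
$$0 \to H^0_{\mf m}(S)_1 \to S_1 \xto{\z(S)_1} \mbfB(A)_d \to H^1_{\mf m}(S)_1 \to 0.$$
The commutative diagram displayed in the excerpt immediately preceding the theorem statement identifies $(\t_A)_d$ with the composite $\z(S)_1 \circ \widetilde{\i}$, where $\widetilde{\i}\colon A_d \to S_1$ is the $\k$-linear isomorphism arising from $\i\colon V^{\tsr d}\to R$. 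Since $\widetilde{\i}$ is bijective, this immediately gives $\widetilde{\i}(\ker(\t_A)_d) = \ker \z(S)_1 = H^0_{\mf m}(S)_1$, which is the first displayed formula, and $(\t_A)_d$ is surjective if and only if the cokernel $H^1_{\mf m}(S)_1$ of $\z(S)_1$ vanishes.

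For the two numbered consequences, recall from Proposition \ref{map A to B} that $(\t_A)_1$ is already an isomorphism, so the (in)surjectivity/(in)jectivity of $\t_A$ reduces to the same condition on $(\t_A)_d$ for all $d \ge 2$. Item (1) is then just the first formula applied to each such $d$. For item (2), since $\t_A$ is a graded algebra map that is bijective in degree $1$, its image is exactly the subalgebra of $\mbfB(A)$ generated by $\mbfB(A)_1$; therefore $\mbfB(A)$ is $1$-generated iff $(\t_A)_d$ is surjective for every $d \ge 2$, which by the second formula is equivalent to $H^1_{\mf m}({\mathbf U}_d(A))_1 = 0$ for all $d \ge 2$. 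Finally, $\depth\,{\mathbf U}_d(A) \ge 2$ is equivalent (via the standard characterization of depth through local cohomology) to the vanishing of both $H^0_{\mf m}({\mathbf U}_d(A))$ and $H^1_{\mf m}({\mathbf U}_d(A))$, which in particular makes their degree-$1$ parts zero; applying both (1) and (2) then gives $A \cong \mbfB(A)$. The argument is essentially a bookkeeping exercise that assembles tools already built in the paper (the functor ${\mathbf U}_d$, Proposition \ref{model for Gd}, and the naturality/exactness properties of (\ref{four-term-sequence})), so I expect no substantive obstacle; the only subtlety is verifying Noetherianity of $R$, which is immediate.
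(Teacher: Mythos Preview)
Your proposal is correct and follows essentially the same approach as the paper: both extract the degree-$1$ part of the four-term local cohomology sequence for ${\mathbf U}_d(A)$ and use the commutative triangle identifying $(\t_A)_d$ with $\z({\mathbf U}_d(A))_1\circ\widetilde{\i}$, together with the fact that $(\t_A)_1$ is bijective. Your write-up is somewhat more explicit (the Noetherianity check, the identification of $\mcL_d$, and the observation that $\im\t_A$ is the degree-$1$-generated subalgebra), but there is no substantive difference in strategy.
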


\begin{proof}
All of the statements preceding the last statement follow easily from the following facts: $\widetilde{\i}:A_d\to {\mathbf U}_d(A)_1$ is a $\k$-linear isomorphism, the following diagram is commutative with an exact bottom row
$$\xymatrix{
  &A_d \ar[d]^{\widetilde{\i}} \ar[rd]^{(\t_A)_d}  & & & \\
 0\to H^0_{\mf m}({\mathbf U}_d(A))_1  \ar[r] & {\mathbf U}_d(A)_1  \ar[r]^-{\z_1} & H^0(\U_d(A), \mcL_d) \ar[r] & H^1_{\mf m}({\mathbf U}_d(A))_1 \to 0  &
},$$
and $(\t_A)_1$ is a $\k$-linear isomorphism.
The last statement now follows immediately from the fact that $\depth\ {\mathbf U}_d(A)= \inf\{n : H^n_{\mf m}({\mathbf U}_d(A)) \ne 0\}$.

\end{proof}

The bijection between closed points of $\U_d(A)$ and isomorphism classes of truncated point modules of length $d+1$ was described in \cite[Proposition 3.9]{ATVI}. To establish our characterization of the kernel of $\t_A$, it is helpful to describe this correspondence in more detail.

If $M$ is a truncated point module for ${\mathbf T}(V)$ of length $d + 1$, we denote $L_i(M)= \Hom_{\k}(M_{i-1} ,M_i)$ for each $1\le i\le d$. Let $\rho:M\tsr {\mathbf T}(V)\to M$ be the module structure map. Let $\widetilde{\rho}_i(M):V\to L_i(M)$ be the map adjoint to $\rho_i:M_{i-1}\tsr V\to M_i$.
Since $M$ is cyclic, generated in degree 0, the maps $\widetilde{\rho}_i(M)$ are nonzero. Choosing bases for the graded components $M_i$, we may view the $\widetilde{\rho}_i(M)$ as equivalence classes in $\P(V^*)$, and the collection $\widetilde{\rho}(M)=\{\widetilde{\rho}_i(M)\}_{1\le i\le d}$ as an element of the cartesian product $\P(V^*)^{\times d}$.

Conversely, given a collection $\widetilde{\rho}=\{\widetilde{\rho}_i\in \P(V^*)\}_{1\le i\le d}$, for $0\le i\le d$, let $M'_i$ be a 1-dimensional $\k$-vector space with distinguished basis element $x_i$. For $1\le i\le d$, define $\k$-linear maps $\rho'_i:M'_{i-1}\tsr V\to M'_i$ by $x_{i-1}\tsr v\mapsto [\widetilde{\rho}_i](v)x_i$, where $[\widetilde{\rho}_i]$ is any representative of the equivalence class $\widetilde{\rho}_i$. Since $\mathbf{T}(V)$ is free, these maps uniquely determine the structure of a graded right $\mathbf{T}(V)$-module on $M(\widetilde{\rho})=\bigoplus_{i=0}^d M'_i.$ Since the maps $[\widetilde{\rho}_i]$ are nonzero, $M(\widetilde{\rho})$ is cyclic, generated in degree 0, hence it is a truncated right point module of length $d+1$. One readily checks that $M(\widetilde{\rho}(M))\cong M$ and $\widetilde{\rho}(M(\widetilde{\rho}))$ differs from $\widetilde{\rho}$ at most by rescalings of the $\widetilde{\rho}_i.$  This proves the following.

\begin{lemma} 
\label{points and modules0}
The maps $M\mapsto \widetilde{\rho}(M)$ and $\widetilde{\rho}\mapsto M(\widetilde{\rho})$ are inverse bijective correspondences between:
\begin{enumerate}
\item the set of isomorphism classes of truncated right point modules for ${\mathbf T}(V)$ of length $d + 1$, and
\item the $d$-fold cartesian product (of sets) $\P(V^*)^{\times d}$
\end{enumerate}
\end{lemma}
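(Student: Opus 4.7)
The plan is to make precise the two constructions sketched in the paragraphs immediately preceding the lemma, $M \mapsto \widetilde{\rho}(M)$ and $\widetilde{\rho} \mapsto M(\widetilde{\rho})$, verify that each descends to the appropriate equivalence classes, and then check that the two assignments are mutually inverse.

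First, I would show that $M \mapsto \widetilde{\rho}(M)$ is well-defined on isomorphism classes. Given a truncated right point module $M$ of length $d+1$, each $L_i(M) = \Hom_{\k}(M_{i-1}, M_i)$ is one-dimensional, and the adjoint $\widetilde{\rho}_i(M): V \to L_i(M)$ of the structure map $\rho_i$ is nonzero (otherwise $M$ would fail to be cyclic in degree 0). Thus $\widetilde{\rho}_i(M)$ determines a well-defined point of $\P(V^*)$ after any identification $L_i(M) \cong \k$, and rescaling any basis vector of $M_i$ only modifies these identifications by nonzero scalars. A routine calculation shows that a graded isomorphism $\varphi: M \to M'$ induces exactly such a rescaling, so $\widetilde{\rho}(M) \in \P(V^*)^{\times d}$ depends only on the isomorphism class of $M$.

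Second, I would verify that $\widetilde{\rho} \mapsto M(\widetilde{\rho})$ is well-defined up to isomorphism. Because ${\mathbf T}(V)$ is free on $V$ and $M(\widetilde{\rho})$ is generated by $x_0$, specifying the $\k$-linear maps $\rho'_i$ on each graded component uniquely determines a right ${\mathbf T}(V)$-module structure. Replacing a representative $[\widetilde{\rho}_i]$ by a nonzero scalar multiple $\lambda_i [\widetilde{\rho}_i]$ yields an isomorphic module, as one exhibits via the rescaling $x_i \mapsto \mu_i x_i$ for an appropriate sequence $\mu_0 = 1, \mu_1, \ldots, \mu_d$ of nonzero scalars determined inductively by the $\lambda_i$. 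The nonvanishing of each $[\widetilde{\rho}_i]$ guarantees cyclicity of $M(\widetilde{\rho})$, so it is indeed a truncated right point module of length $d+1$.

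Finally, I would check that the two composites are the identity. Starting from $M$, after choosing bases $\{m_i\}$ and producing the representatives $[\widetilde{\rho}_i(M)]$, the module $M(\widetilde{\rho}(M))$ has the same graded components and structure maps as $M$ under the identification $x_i \leftrightarrow m_i$; this gives an explicit isomorphism. Starting from $\widetilde{\rho} \in \P(V^*)^{\times d}$, the adjoints of the structure maps of $M(\widetilde{\rho})$ are, by construction, the chosen representatives $[\widetilde{\rho}_i]$, which coincide with $\widetilde{\rho}_i$ in $\P(V^*)$. The only real obstacle is careful bookkeeping of the projective scalar ambiguities on both sides: one must verify that every arbitrary choice — basis vectors for the $M_i$, representatives of the $\widetilde{\rho}_i$, and distinguished generators $x_i$ — cancels appropriately, so that both assignments genuinely pass to (isomorphism classes of) modules and to $\P(V^*)^{\times d}$ respectively. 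Once this tracking is handled, the inverse correspondence is essentially tautological.
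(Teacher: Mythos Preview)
Your proposal is correct and follows essentially the same approach as the paper: the paper's proof consists of the two constructions in the paragraphs immediately preceding the lemma together with the one-line assertion that $M(\widetilde{\rho}(M))\cong M$ and that $\widetilde{\rho}(M(\widetilde{\rho}))$ differs from $\widetilde{\rho}$ only by rescalings. You have simply fleshed out the scalar bookkeeping that the paper leaves implicit.
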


The correspondence of Lemma \ref{points and modules0} can be helpfully extended to include the more easily generalizable \emph{set of $\k$-valued points of the fibered product $\P(V^*)^{\times d}$,} which we now describe.

\begin{notn}
For the remainder of this section, let $V$ be a finite-dimensional vector space, let 
$S={\rm Sym}(V)$, and let $R={\mathbf U}_d({\mathbf T}(V))$. Let $Y= \Proj \k[t]$ be the scheme over $\k$ with one point. 
\end{notn}

If $M$ is a truncated point module for ${\mathbf T}(V)$ of length $d + 1$, then for $1\le i\le d$, let $\mcL_i(M)=\widetilde{L_i(M)}$ be the quasicoherent $\O_Y$-module associated to the trivial $\k[t]$-module $L_i(M)$. Composition induces an isomorphism of invertible sheaves $\mcL_1(M) \tsr\cdots\tsr\mcL_d(M) \cong \widetilde{L(M)}$ where $L(M) = \Hom_{\k}(M_0, M_d)$, hence
there is a canonical graded ring isomorphism $$\G_*(Y,\mcL_1(M))\circ\cdots\circ\G_*(Y,\mcL_d(M))\to\G_*(Y,\widetilde{L(M)}),$$ where $\circ$ denotes the Segre product of graded rings. Henceforth, we suppress the scheme $Y$ and write $\G_*(\mcL_i(M))$ for $\G_*(Y,\mcL_i(M))$.
%
Note that $\G_*(\mcL_i(M))\cong \k[t]$ and hence $\G_*(\mcL_i(M))_{(0)}=\k$.

\begin{const}\label{point pair from maps} ($\Delta, \nabla$)
Given $M$, a truncated point module for ${\mathbf T}(V)$ of length $d + 1$, let $L_i=L_i(M)$, $L=L(M)$, $\mcL_i=\mcL_i(M)$, and $\widetilde{\rho}=\widetilde{\rho}(M)$. There are surjective homomorphisms of graded rings $\widetilde{\psi}_i:S\to  \G_*(\mcL_i)$  given on generators by $\widetilde{\psi}_i(v)=\widetilde{\rho}_i(v)$. 
Since $R=S\circ\cdots\circ S$, the $\widetilde{\psi_i}$ induce a canonical ring map 
$$\widetilde{\psi}:R\to  \G_*(\mcL_1)\circ\cdots\circ\G_*(\mcL_d)\cong \G_*(\widetilde{L})$$
given on generators by $v_1\tsr\cdots\tsr v_d\mapsto \widetilde{\rho}_d(v_{d})\circ\cdots\circ\widetilde{\rho}_1(v_{1})$. This map is not zero since the $\widetilde{\rho}_i$ are nonzero, so $\mathfrak p=\ker \widetilde{\psi}\in \Proj R$ is a closed point, and $\widetilde{\psi}$ localizes to a nonzero local homomorphism of local rings $\psi:R_{(\mathfrak p)}\to \G_*(\widetilde{L})_{(0)}=\k$. We define $\Delta(M)=(\mf p, \psi)$. 

Observe that if $v\in R_1$ such that $v\notin\mathfrak p$, then for any $w\in R_1$, we have
$\psi(w/v)=\widetilde{\psi}(w)/\widetilde{\psi}(v)$. This implies that any nonzero rescaling of the $\widetilde{\rho}_i$ yields the same pair $(\mf p, \psi)$. In particular, $\Delta(M)=\Delta(M(\widetilde{\rho}(M)))$.

Conversely, given $(\mathfrak q, \varphi)$, where $\mf q$ is a closed point in $\Proj R$ and $\varphi: R_{(\mathfrak q)}\to\k$ is a nonzero local homomorphism of local rings, let $v=v_1\tsr\cdots\tsr v_d\in R_1$ such that $v\notin\mathfrak q$. Define 
$$\varphi'_i:S_{(v_i)}\xto{j_i} S_{(v_1)}\tsr\cdots\tsr S_{(v_d)}\xto{\g} R_{(v)}\to R_{(\mathfrak q)}\xto{\varphi}\k$$
where $j_i$ includes the $i$-th tensor factor and $\g$ is the canonical ring isomorphism.  
For $0\le i\le d$, define $\widetilde{\rho}'_i:V\to \k$ by $\widetilde{\rho}'_i(w)= \varphi'_i(w/v_i)$.
 Define $\nabla(\mathfrak q, \varphi) = M(\widetilde{\rho}')$ where $\widetilde{\rho}'=\{\widetilde{\rho}'_i\}_{1 \leq i \leq d}$. 

\end{const}

Recall that the data $(\mathfrak q, \varphi)$ in Construction \ref{point pair from maps} uniquely determines a scheme morphism $Y\to \Proj T$, hence they may be viewed as the data of a $\k$-valued point of $\Proj T$. Such pairs are important to the discussion below, so we make the following definition.

 \begin{defn}\label{pointPair}  Let $T$ be a standardly graded commutative $\k$-algebra. 
If  $\mathfrak p$ is a closed point of $\Proj T$ and $\psi:T_{(\mathfrak p)}\to \k$ is a nonzero local homomorphism of local rings, we call $(\mathfrak p, \psi)$ a \emph{point pair} for $\Proj T$.

 If $M$ is a truncated right point module for ${\mathbf T}(V)$, the pair $(\mathfrak p, \psi)=\Delta(M)$ described in Construction \ref{point pair from maps} will be referred to as \emph{the point pair for $M$}. 
 \end{defn}


We show that point pairs for $R$ determine point modules for ${\mathbf T}(V)$, up to isomorphism, extending Lemma \ref{points and modules0}.

\begin{prop} 
\label{points and modules}
There are bijective correspondences between the following:
\begin{enumerate}
\item the set of isomorphism classes of truncated right point modules for ${\mathbf T}(V)$ of length $d + 1$, 
\item the $d$-fold cartesian product (of sets) $\P(V^*)^{\times d}$, and
\item the set of point pairs $(\mathfrak p, \psi)$ for $\Proj R$.
\end{enumerate}
\end{prop}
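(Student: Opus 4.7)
The correspondence between (1) and (2) is exactly Lemma \ref{points and modules0}, so the task is to promote this to a three-way bijection using the maps $\Delta$ and $\nabla$ of Construction \ref{point pair from maps}. I will show that $\Delta$ descends to a well-defined map from (1) to (3), that $\nabla$ defines a map from (3) to (2) (which we then compose with $M(\,\cdot\,)$ to reach (1)), and finally that the two composites are the identity.

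First I would handle well-definedness. For $\Delta$, an isomorphism of truncated point modules $M \cong M'$ amounts to rescaling each $\widetilde{\rho}_i(M)$ by a nonzero scalar, which rescales each $\widetilde{\psi}_i$ by a nonzero scalar and hence scales $\widetilde{\psi}$ componentwise in the Segre product. Such a rescaling leaves $\mathfrak{p} = \ker \widetilde{\psi}$ unchanged, and as the construction records, the identity $\psi(w/v) = \widetilde{\psi}(w)/\widetilde{\psi}(v)$ shows the localized map $\psi$ is unchanged too; hence $\Delta$ factors through (1). For $\nabla$, suppose $v = v_1 \tsr \cdots \tsr v_d$ and $w = w_1 \tsr \cdots \tsr w_d$ both lie in $R_1 \setminus \mathfrak{q}$. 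Under the identifications $R_{(\mathfrak{q})} \to \k$ afforded by $\varphi$, each $v_i/w_i \in S_{(v_i \cdot w_i)}$ becomes a nonzero scalar, so the functionals $\widetilde{\rho}'_i$ built from $v$ and from $w$ differ by scalar multiplication; they therefore determine the same class in $\P(V^*)$ and, via Lemma \ref{points and modules0}, the same isomorphism class of truncated point module.

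Next I would verify that $\Delta$ and $\nabla$ are mutually inverse. For $\nabla \circ \Delta$: starting from $M$ with $\widetilde{\rho} = \widetilde{\rho}(M)$, choose $v \in R_1 \setminus \mathfrak{p}$, which by the explicit form of $\widetilde{\psi}$ means $\widetilde{\rho}_i(v_i) \neq 0$ for each $i$; then the defining formula for $\widetilde{\rho}'_i$ gives $\widetilde{\rho}'_i(w) = \widetilde{\rho}_i(w)/\widetilde{\rho}_i(v_i)$, which is a nonzero rescaling of $\widetilde{\rho}_i$. Lemma \ref{points and modules0} then yields $\nabla(\Delta(M)) \cong M$. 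For $\Delta \circ \nabla$: given $(\mathfrak{q}, \varphi)$, set $M' = \nabla(\mathfrak{q}, \varphi)$ and $(\mathfrak{p}, \psi) = \Delta(M')$. Unwinding Construction \ref{point pair from maps}, for any $w = w_1 \tsr \cdots \tsr w_d \in R_1$ with $v \notin \mathfrak{q}$ one computes
\[
\psi(w/v) \;=\; \frac{\widetilde{\rho}'_d(w_d) \cdots \widetilde{\rho}'_1(w_1)}{\widetilde{\rho}'_d(v_d) \cdots \widetilde{\rho}'_1(v_1)} \;=\; \varphi'_d(w_d/v_d)\cdots \varphi'_1(w_1/v_1) \;=\; \varphi(w/v),
\]
the last equality by the Segre-product factorization used to define the $\varphi'_i$. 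Since elements of the form $w/v$ generate the local ring $R_{(\mathfrak{q})}$, we obtain $\psi = \varphi$, and in particular $\mathfrak{p} = \ker \widetilde{\psi} = \mathfrak{q}$.

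I expect the main obstacle to be bookkeeping around the Segre product $R \cong S \circ \cdots \circ S$: tracking how an element of $R_{(\mathfrak{q})}$ decomposes via the factorwise inclusions $j_i : S_{(v_i)} \to R_{(v)}$, and checking that the tensor-by-tensor definition of $\widetilde{\psi}$ in Construction \ref{point pair from maps} really does recover $\varphi$ on the full localized ring and not merely on the image of the generators. Once this reconciliation is in hand, both directions of the correspondence reduce to the scalar calculation displayed above, and the bijection with (2) follows by composition with Lemma \ref{points and modules0}.
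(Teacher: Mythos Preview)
Your proposal is correct and follows essentially the same approach as the paper: reduce to showing $\Delta$ and $\nabla$ are mutually inverse, verify $\nabla\circ\Delta$ by the rescaling calculation, and verify $\Delta\circ\nabla$ by unwinding the Segre-factor maps $\varphi'_i$ to recover $\varphi$. The only presentational wrinkle is your final inference ``$\psi=\varphi$, and in particular $\mathfrak p=\mathfrak q$'': a priori $\psi$ and $\varphi$ are defined on the distinct local rings $R_{(\mathfrak p)}$ and $R_{(\mathfrak q)}$, so the equality should first be read as agreement of their restrictions to $R_{(v)}$, from which $\mathfrak p=\mathfrak q$ follows (equal kernels in $R_{(v)}$ determine the same closed point of $D_+(v)$). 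The paper sidesteps this by working one level up with the graded map $\widetilde\psi:R\to\Gamma_*(\widetilde L)$, showing directly that $\widetilde\psi(f)(x_0)=\varphi(f/v^d)x_d$ for homogeneous $f$, hence $\mathfrak q\subseteq\ker\widetilde\psi$, and then invoking maximality of the closed point $\mathfrak q$ to conclude equality before comparing $\psi$ and $\varphi$.
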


By the remark above, (3) is equivalent to (3') \emph{the set of scheme morphisms $Y\to \Proj R$}.

\begin{proof} By Lemma \ref{points and modules0}, it suffices to show that $\Delta$ and $\nabla$ are inverse bijections. 



First we consider $\nabla \circ \Delta$. Let $M$ be a truncated point module of length $d+1$ for $\mathbf{T}(V)$ and let $\widetilde{\rho} = \{\widetilde{\rho}_i(M)\}_{1 \leq i \leq d}$. Set $(\mf p, \psi) = \Delta(\widetilde{\rho})$. 
We claim that $\nabla(\mf p, \psi)\cong M$. To see this, refer to Construction \ref{point pair from maps} and observe that $$\psi'_i(w/v_i)=\psi(v_1\tsr\cdots\tsr v_{i-1}\tsr w\tsr v_{i+1}\tsr\cdots\tsr v_d/v).$$ Furthermore, $\widetilde{\rho}'_i(w)= \psi'_i(w/v_i)=\widetilde{\rho}_i(M)(w)/\widetilde{\rho}_i(M)(v_i)$
 for all $w$, hence the $\widetilde{\rho}'_i$ recover the $\widetilde{\rho}_i(M)$ up to rescaling. It follows that $M(\widetilde{\rho}')\cong M$.

We now show that $\Delta \circ \nabla$ is the identity. Let $(\mathfrak q, \varphi)$ be an arbitrary point pair and set $M(\widetilde{\rho}) = \nabla(\mathfrak q, \varphi)$. Recall that the construction of $M(\widetilde{\rho})$ includes a choice of basis for each graded component. Let $L= \Hom_{\k}(M(\widetilde{\rho})_0, M(\widetilde{\rho})_d)$ and consider $\widetilde{\psi}:R\to \G_*( \widetilde{L})$, the ring map associated to $M(\widetilde{\rho})$, described in the construction of $\Delta(M(\widetilde{\rho}))$. We claim that $\mathfrak q=\ker\widetilde{\psi}$. Since $\mathfrak q$ is a closed point of $\Proj R$, and $\widetilde{\psi}$ is surjective, it suffices to show $\mathfrak q\subseteq\ker\widetilde{\psi}$.  

Let $f\in R_d$ be homogeneous and write $f=\sum w_{i_1}\cdots w_{i_d}$ for $w_{i_j}\in R_1$. We may assume the $w_{i_j}$ are pure tensors and denote the $k$-th tensor component $(w_{i_j})_k$. Then for the basis elements $x_0\in M(\widetilde{\rho})_0$ and $x_d\in M(\widetilde{\rho})_d$, unravelling definitions yields
\begin{align*}
\widetilde{\psi}(w_{i_j})(x_0)&=\widetilde{\rho}_d((w_{i_j})_d)\circ\cdots\circ \widetilde{\rho}_1((w_{i_j})_1)(x_0)\\
&=\varphi'_1((w_{i_j})_1/v_1)\cdots\varphi'_d((w_{i_j})_d/v_d)x_d\\
&=\varphi(\g(j_1((w_{i_j})_1/v_1)))\cdots\varphi(\g(j_d((w_{i_j})_d/v_d)))x_d\\
&=\varphi(\g((w_{i_j})_1/v_1\tsr\cdots\tsr (w_{i_j})_d/v_d))x_d\\
&=\varphi(w_{i_j}/v)x_d
\end{align*}
It follows that $\widetilde{\psi}(f)(x_0)=\varphi(f/v^d)x_d$. In particular, if $w\in R_1$, $\psi(w/v)=\varphi(w/v)$, so $\psi=\varphi$. And if $f\in \mathfrak q$, then $f/v^d\in\ker \varphi$ hence $f\in \ker\widetilde{\psi}$. We conclude that $\mathfrak q=\ker\widetilde{\psi}$.


We have established that $\Delta$ and $\nabla$ are inverse bijections. 


\end{proof}


\begin{prop}
\label{annihilator-prime}
Let $M$ be a truncated right point module for ${\mathbf T}(V)$ of length $d + 1$. 
Let $(\mathfrak p, \psi)$ be the point pair for $M$. 
An element $f\in {\mathbf T}(V)_d$ annihilates $M$ if and only if $\i(f)\in \mathfrak p$.
\end{prop}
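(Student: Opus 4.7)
The plan is to translate both conditions in the equivalence into statements about the ring map $\widetilde{\psi}: R \to \G_*(\widetilde{L})$ from Construction \ref{point pair from maps}. Once these identifications are in place, the equivalence is immediate from the fact that $\mathfrak p = \ker\widetilde{\psi}$ by definition of $\Delta(M)$.

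First I would observe that because $M$ is concentrated in degrees $0,\ldots, d$ and is generated by $M_0$, for any $f\in {\mathbf T}(V)_d = V^{\tsr d}$ the action $M\cdot f$ agrees with $M_0\cdot f\subseteq M_d$, and since $M_0 = \k x_0$ is one-dimensional, $f$ annihilates $M$ precisely when $x_0\cdot f = 0\in M_d$. For a pure tensor $f = v_1\tsr \cdots \tsr v_d$, iterating the structure map gives
\[
x_0\cdot f \;=\; \bigl(\widetilde{\rho}_d(v_d)\circ \cdots\circ \widetilde{\rho}_1(v_1)\bigr)(x_0),
\]
and by Construction \ref{point pair from maps} the right-hand side is exactly $\widetilde{\psi}(\iota(f))(x_0)$ under the canonical identification $\G_*(\widetilde{L})_1 \cong L = \Hom_{\k}(M_0, M_d)$ induced by the Segre-product isomorphism $\G_*(\mcL_1)\circ\cdots\circ\G_*(\mcL_d) \cong \G_*(\widetilde{L})$. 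Extending by linearity in $f$, the identity $x_0\cdot f = \widetilde{\psi}(\iota(f))(x_0)$ holds for all $f\in V^{\tsr d}$.

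Next, since $M_d\cong \k$, the space $L = \Hom_{\k}(M_0, M_d)$ is one-dimensional and evaluation at $x_0$ is a $\k$-linear isomorphism $L\xrightarrow{\sim} M_d$. Hence $\widetilde{\psi}(\iota(f))(x_0) = 0$ if and only if $\widetilde{\psi}(\iota(f)) = 0$ in $\G_*(\widetilde{L})_1$. Combining this with the homogeneity of $\mathfrak p$ and the equality $\mathfrak p = \ker\widetilde{\psi}$, I conclude
\[
M\cdot f = 0 \iff \widetilde{\psi}(\iota(f)) = 0 \iff \iota(f)\in \ker\widetilde{\psi}\cap R_1 = \mathfrak p\cap R_1,
\]
which is the claim.

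The only real piece of bookkeeping is verifying that the induced ring map $\widetilde\psi$ on $R_1$ sends a pure tensor $v_1\tsr\cdots\tsr v_d$ to the composition $\widetilde\rho_d(v_d)\circ\cdots\circ\widetilde\rho_1(v_1)$ under the identification $\G_*(\widetilde{L})_1 \cong \Hom_{\k}(M_0, M_d)$. This is built into the Segre-product construction and is stated explicitly in Construction \ref{point pair from maps}, so there is no substantive obstacle; the proof is essentially a one-line definition chase together with the one-dimensionality of $M_0$ and $M_d$.
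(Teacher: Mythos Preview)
Your proof is correct and follows essentially the same approach as the paper: both arguments compute the action of $f$ on $M_0$ as $\widetilde{\psi}(\iota(f))$ applied to a generator, and then invoke $\mathfrak p = \ker\widetilde{\psi}$. Your version is slightly more explicit about why it suffices to check the action on $M_0$ and why $\widetilde{\psi}(\iota(f))(x_0)=0$ forces $\widetilde{\psi}(\iota(f))=0$, but the underlying definition-chase is identical.
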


\begin{proof} 
Let $\widetilde{\rho}(M)=\{\widetilde{\rho}_i\}_{1\le i\le d}$ as in the discussion preceding Lemma \ref{points and modules0}. For $f\in {\mathbf T}(V)_d$, write $f=\sum v_{i_1}\tsr\cdots\tsr v_{i_d}$, where $v_{i_j}\in V$. Then for any $m\in M_0$,
$$\rho(m\tsr f)=\sum \left(\widetilde{\rho}_d(v_{i_d})\circ\cdots\circ\widetilde{\rho}_1(v_{i_1})\right)(m)=
\widetilde{\psi}(\i(f))(m),$$
where $\widetilde{\psi}:R\to \G_*(\widetilde{L})$ is the map from Construction \ref{point pair from maps} that localizes to $\psi$.  
%
%
By construction, $\mathfrak p=\ker \, \widetilde{\psi}$, and the result follows.

\end{proof}

We can now record a version of Proposition \ref{points and modules} for $\Proj {\mathbf U}_d(A)$, our model for $\U_d(A)$ via Proposition \ref{model for Gd}. 
 
\begin{prop}
\label{passing to A}
Let $A\in {\tt Alg}^1$. Write $A={\mathbf T}(V)/I$ for $I = \ker \pi_A$, where $\pi_A:{\mathbf T}(A_1)\to A$. There are bijective correspondences between the following.
\begin{enumerate}
\item isomorphism classes of truncated right point modules for $A$ of length $d+1$,
\item point pairs $(\mathfrak p, \psi)$ for $\Proj R$ such that $\i(I_d)\subseteq \mathfrak p$, 
\item point pairs $(\mathfrak q, \varphi)$ for $\Proj U_d(A)$. 
\end{enumerate}
\end{prop}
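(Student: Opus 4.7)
The plan is to establish the two bijections $(1)\leftrightarrow(2)$ and $(2)\leftrightarrow(3)$ separately, by reducing each to results already in hand.

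For $(1)\leftrightarrow(2)$, I would begin with Proposition \ref{points and modules}, which identifies isomorphism classes of truncated right point modules for $\mathbf{T}(V)$ of length $d+1$ with point pairs for $\Proj R$. The task is to characterize under this bijection which of these modules descend to modules over $A={\mathbf T}(V)/I$. A right ${\mathbf T}(V)$-module $M$ of length $d+1$ is an $A$-module iff $M\cdot I=0$. The key claim is that for such an $M$, this is equivalent to the single condition $M_0\cdot I_d=0$. One direction is immediate. For the converse, let $r\in I_n$ with $2\le n\le d$; since $I$ is a right ideal, $r\cdot V^{\tsr(d-n)}\subseteq I_d$, so $M_0\cdot r\cdot V^{\tsr(d-n)}=0$. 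Now $M_0\cdot r\subseteq M_n$ is at most one-dimensional, and if it were nonzero then, by one-dimensionality of each graded piece and the fact that each structure map $\widetilde{\rho}_i$ is nonzero, repeated action of $V$ would carry it onto $M_d\ne 0$, a contradiction. Hence $M_0\cdot r=0$, and cyclicity of $M$ upgrades this to $M\cdot r=0$ for all such $r$; elements of $I_k$ with $k>d$ annihilate $M$ trivially. Applying Proposition \ref{annihilator-prime} to each $f\in I_d$ then translates $M_0\cdot I_d=0$ into $\i(I_d)\subseteq\mathfrak p$, yielding $(1)\leftrightarrow(2)$.

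For $(2)\leftrightarrow(3)$, the definition ${\mathbf U}_d(A)=R/(\i(I_d))$ yields a canonical closed immersion $\Proj {\mathbf U}_d(A)\hookrightarrow\Proj R$ identifying the former with the closed subscheme cut out by the homogeneous ideal $(\i(I_d))$. On closed points its image consists precisely of those primes $\mathfrak p$ of $\Proj R$ containing $\i(I_d)$. Since $\k$ is algebraically closed, every closed point of a projective $\k$-scheme has residue field $\k$, so a point pair is uniquely determined by its underlying closed point; the required bijection is then the standard correspondence between $\k$-points of a closed subscheme and $\k$-points of the ambient scheme lying on it.

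The main obstacle is the key claim underlying $(1)\leftrightarrow(2)$: that the apparently weaker condition $M_0\cdot I_d=0$ already forces $I$ to annihilate all of $M$. The argument depends essentially on the length-$(d+1)$ hypothesis together with cyclicity and one-dimensionality of the graded components; without these, the reduction via the right-ideal property of $I$ breaks down.
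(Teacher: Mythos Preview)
Your proof is correct. For $(1)\leftrightarrow(2)$ you follow the paper's route via Propositions \ref{points and modules} and \ref{annihilator-prime}, but you supply a step the paper leaves implicit: that for a truncated point module $M$ of length $d+1$ over ${\mathbf T}(V)$, annihilation by $I$ reduces to $M_0\cdot I_d=0$; note only that your ``cyclicity upgrades'' step implicitly uses that $I$ is two-sided, not merely a right ideal. For $(2)\leftrightarrow(3)$ you diverge from the paper. The paper constructs the correspondence explicitly: the map $\widetilde{\psi}:R\to\G_*(\widetilde{L})$ of Construction \ref{point pair from maps} factors through ${\mathbf U}_d(A)=R/(\i(I_d))$ to give a point pair for $\Proj{\mathbf U}_d(A)$, and the inverse pulls back along the surjection ${\mathbf U}_d(\pi_A)$. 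You instead invoke algebraic closure of $\k$ to identify point pairs with their underlying closed points, reducing to the standard bijection for closed subschemes. Your route is cleaner here; the paper's explicit construction avoids algebraic closure at this step and keeps the link to the module-theoretic description transparent, which is convenient for the subsequent Proposition \ref{annihilator-prime-A}.
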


\begin{proof} The bijection between (1) and (2) is a consequence of Propositions \ref{points and modules} and \ref{annihilator-prime}. We establish a bijection between (2) and (3).

Let $(\mathfrak p, \psi)$ be as in (2) and let $M=\nabla(\mathfrak p, \psi)$ be the corresponding truncated right point module. Since $\i(I_d)\subseteq \mathfrak p=\ker\widetilde{\psi}$, the map  $\widetilde{\psi}$ factors through ${\mathbf U}_d(A)$ as $\widetilde{\psi}_A:{\mathbf U}_d(A)\to \G_*(\widetilde{L})$. Since $M$ is a cyclic $A$-module, $\widetilde{\psi}_A$ is not zero, hence  $\mathfrak q_A=\ker\widetilde{\psi}_A$ is a closed point for $\Proj {\mathbf U}_d(A)$, and $\widetilde{\psi}_A$ induces a nonzero local homomorphism of local rings $\psi_A:{\mathbf U}_d(A)_{(\mathfrak q_A)}\to \k.$

Conversely, given a point pair $(\mathfrak q, \varphi)$ for  $\Proj {\mathbf U}_d(A)$, let $\mathfrak p={\mathbf U}_d(\pi_A)^{-1}(\mathfrak q)\in \Proj R$. Then $\mathfrak p$ is a closed point such that $\i(I_d)\subseteq\mathfrak p$,
and the composite map $\psi: R_{(\mathfrak p)}\xto{{\mathbf U}_d(\pi_A)_{(\mathfrak p)}} {\mathbf U}_d(A)_{(\mathfrak q)}\xto{\varphi}\k$ is defined. 
It is clear that this is inverse to the construction of the previous paragraph. 


\end{proof}

\begin{prop}
\label{annihilator-prime-A}
Let $A\in {\tt Alg}^1$. Let $M$ be a truncated right point module for $A$ of length $d + 1$. Let $(\mathfrak q, \varphi)$ be the point pair for $M$. An element $f\in A_d$ annihilates $M$ if and only if $\widetilde{\i}(f)\in \mathfrak q$.
\end{prop}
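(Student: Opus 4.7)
The plan is to reduce this to Proposition \ref{annihilator-prime} by lifting $f$ to the tensor algebra and using the explicit correspondence of point pairs established in Proposition \ref{passing to A}.

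First I would choose any lift $\widetilde{f}\in {\mathbf T}(V)_d$ of $f$ under $\pi_A$. Since $M$ is an $A$-module via $\pi_A$, the element $f\in A_d$ annihilates $M$ if and only if $\widetilde{f}$ annihilates $M$ viewed as a truncated right point module for ${\mathbf T}(V)$ (any two lifts differ by an element of $I_d$, which automatically annihilates $M$). Next, let $(\mathfrak p,\psi)$ be the point pair for $\Proj R$ corresponding to $M$ under the bijection of Proposition \ref{passing to A}. Proposition \ref{annihilator-prime} then gives that $\widetilde{f}$ annihilates $M$ if and only if $\i(\widetilde{f})\in \mathfrak p$.

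The remaining step is to translate $\i(\widetilde{f})\in \mathfrak p$ into $\widetilde{\i}(f)\in \mathfrak q$. By the construction in Proposition \ref{passing to A}, $\mathfrak p = {\mathbf U}_d(\pi_A)^{-1}(\mathfrak q)$. Moreover, the definitions of $\i$ and $\widetilde{\i}$ make the square
\[
\xymatrix{
V^{\tsr d}\ar[r]^{\i}\ar[d]_{(\pi_A)_d} & R\ar[d]^{{\mathbf U}_d(\pi_A)}\\
A_d\ar[r]^{\widetilde{\i}} & {\mathbf U}_d(A)
}
\]
commute (this is exactly the left trapezoid appearing just before Theorem \ref{kernel-image via local cohomology}). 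Hence ${\mathbf U}_d(\pi_A)(\i(\widetilde{f}))=\widetilde{\i}(f)$, so $\i(\widetilde{f})\in \mathfrak p$ if and only if $\widetilde{\i}(f)\in \mathfrak q$. Combining these three equivalences yields the result.

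The main obstacle here is purely bookkeeping: one has to be careful that the point pair assigned to $M$ via the $A$-module structure matches up, under the correspondence in Proposition \ref{passing to A}, with the point pair for the same $M$ viewed as a ${\mathbf T}(V)$-module. Since Proposition \ref{passing to A} was set up precisely to record this compatibility (via factoring $\widetilde{\psi}$ through ${\mathbf U}_d(A)$), no additional work beyond citing that correspondence is required.
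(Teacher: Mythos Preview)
Your proof is correct and follows essentially the same approach as the paper's: lift $f$ to $\widetilde{f}\in {\mathbf T}(V)_d$, apply Proposition \ref{annihilator-prime} to characterize annihilation via $\i(\widetilde{f})\in\mathfrak p$, then use $\mathfrak p={\mathbf U}_d(\pi_A)^{-1}(\mathfrak q)$ together with the commutativity ${\mathbf U}_d(\pi_A)\circ\i=\widetilde{\i}\circ\pi_A$ to translate this to $\widetilde{\i}(f)\in\mathfrak q$. The paper's argument is identical in structure and in the results invoked.
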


\begin{proof}
Write $A={\mathbf T}(V)/I$ where $I=\ker \pi_A$ for $\pi_A:{\mathbf T}(A_1)\to A$.  Let $f\in A_d$ and $f'\in T_d$ such that $\pi_A(f')=f$. By Proposition \ref{passing to A}(2), $(\mathfrak q, \varphi)$ corresponds to a point pair $(\mathfrak p, \psi)$ for $\Proj R$ with $\i(I_d)\subseteq\mathfrak p$.
%
%

Since ${\mathbf U}_d(\pi_A)\circ\i=\widetilde{\i}\circ\pi_A$, we have $\widetilde{\i}(f)\in \mathfrak q$ if and only if  $\i(f')\in \mathfrak p$, which is equivalent to $f'$ annihilating $M_{{\mathbf T}(V)}$ by Proposition \ref{annihilator-prime}. Clearly $f'$ annihilates $M_{{\mathbf T}(V)}$  if and only if $f$ annihilates $M_A$.

\end{proof}

For $T$  a standardly graded commutative algebra, let ${\rm Nil}(T)$ denote the intersection of all relevant homogeneous prime ideals of $T$.

\begin{thm}
\label{annihilating every point module} 
Let $A\in {\tt Alg}^1$. The following are equivalent:
\begin{enumerate}
\item $f\in A_d$ annihilates every truncated right point module for $A$ of length $d+1$.
\item $\widetilde{\i}(f)\in \mathfrak q$ for every closed point $\mathfrak q\in \Proj {\mathbf U}_d(A)$.
\item $\widetilde{\i}(f)$ is in ${\rm Nil}({\mathbf U}_d(A))$.
\end{enumerate}
\end{thm}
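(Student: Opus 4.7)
The plan is to derive the two equivalences (1)$\Leftrightarrow$(2) and (2)$\Leftrightarrow$(3). The first follows from the bijections already established in this section, and the second is a graded Nullstellensatz argument.

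For (1)$\Leftrightarrow$(2): by Proposition \ref{passing to A}, every isomorphism class of truncated right point modules for $A$ of length $d+1$ is of the form $\nabla(\mf q, \varphi)$ for a unique point pair $(\mf q, \varphi)$ for $\Proj {\mathbf U}_d(A)$. Since $\k$ is algebraically closed and ${\mathbf U}_d(A)$ is finitely generated over $\k$, every closed point $\mf q\in\Proj{\mathbf U}_d(A)$ has residue field $\k$ and therefore is the underlying prime of at least one point pair. By Proposition \ref{annihilator-prime-A}, $f\in A_d$ annihilates $\nabla(\mf q,\varphi)$ if and only if $\widetilde{\i}(f)\in\mf q$. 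These two observations together yield (1)$\Leftrightarrow$(2).

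The implication (3)$\Rightarrow$(2) is immediate, since every prime of ${\mathbf U}_d(A)$ contains every nilpotent element. For (2)$\Rightarrow$(3), set $T={\mathbf U}_d(A)$. If $\Proj T=\emptyset$, then $T$ has Krull dimension zero, so every element of $T_{\ge 1}$ is nilpotent and there is nothing to prove. Otherwise, every minimal prime $\mf p$ of $T$ is a relevant homogeneous prime, and $T/\mf p$ is a finitely generated standardly graded connected $\k$-domain with $\Proj T/\mf p$ nonempty. Because $T/\mf p$ is a Jacobson ring of finite type over the algebraically closed field $\k$, the intersection of those maximal homogeneous ideals of $T/\mf p$ corresponding to closed points of $\Proj T/\mf p$ is zero. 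These closed points correspond to closed points of $\Proj T$ containing $\mf p$, so the hypothesis (2) forces the image of $\widetilde{\i}(f)$ in the domain $T/\mf p$ to vanish. Thus $\widetilde{\i}(f)\in \mf p$ for every minimal prime of $T$, hence $\widetilde{\i}(f)\in{\rm Nil}(T)$.

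The main step is the Nullstellensatz/Jacobson argument used in (2)$\Rightarrow$(3); the remaining implications follow directly from earlier results or from the definition of the nilradical. Some care is needed to handle the degenerate case when $\Proj {\mathbf U}_d(A)$ is empty, but this is disposed of by the Krull dimension zero observation above.
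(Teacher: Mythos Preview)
Your argument is correct. The equivalence (1)$\Leftrightarrow$(2) is handled exactly as in the paper, via Propositions~\ref{passing to A} and~\ref{annihilator-prime-A}.

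For (2)$\Leftrightarrow$(3) you take a different, more elaborate route than the paper. The paper argues the contrapositive of (2)$\Rightarrow$(3) in one line: if $\widetilde{\i}(f)\notin{\rm Nil}({\mathbf U}_d(A))$ then by definition $D_+(\widetilde{\i}(f))$ is a nonempty open subset of $\Proj{\mathbf U}_d(A)$, and since this scheme is of finite type over $\k$ it must contain a closed point, contradicting (2). Your approach instead passes to each minimal prime $\mf p$, uses the Jacobson property of the domain $T/\mf p$ to show the closed points of $\Proj(T/\mf p)$ have trivial intersection, and concludes that $\widetilde{\i}(f)$ lies in every minimal prime. This is valid, but it effectively re-derives the ``closed points are dense'' fact inside each irreducible component rather than invoking it once for $D_+(\widetilde{\i}(f))$.

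One point of caution: your phrasing (``every prime contains every nilpotent element'', and concluding via minimal primes) treats ${\rm Nil}(T)$ as the ordinary nilradical, whereas the paper defines ${\rm Nil}(T)$ as the intersection of all \emph{relevant homogeneous} primes. This causes no error, since when $\Proj T\neq\emptyset$ every minimal prime of $T$ is homogeneous and relevant (as you implicitly use), so the two notions coincide; and you handle the empty case separately. But (3)$\Rightarrow$(2) is in fact tautological under the paper's definition---closed points of $\Proj T$ are relevant homogeneous primes---so no appeal to nilpotence is needed there.
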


\begin{proof}
The equivalence of (1) and (2) follows from Propositions \ref{passing to A} and \ref{annihilator-prime-A}. It is trivial to see that (3) implies (2). For (2) $\Rightarrow$ (3), if $f\in {\mathbf U}_d(A)$ and $f\notin {\rm Nil}({\mathbf U}_d(A))$, then ${\mathbf U}_d(A)_{(f)}$ is nonzero, hence $D_+(f)$ contains a closed point, so there is some closed point of $\Proj {\mathbf U}_d(A)$ that doesn't contain $f$.


\end{proof}

Let 
$$J_d = \{f \in A_d : Mf = 0 \text{ for every truncated point module $M$ of length $d+1$}\}$$ 
 be the set of degree-$d$ elements of $A$ that annihilate every truncated right point module for $A$ of length $d+1$. We have the following characterization of the kernel of $\t_A$.

\begin{cor}
\label{kernel of tau}
For all $d\geq 2$, $\ker (\tau_A)_d\subseteq J_d$, with equality if $\U_d(A)$ is reduced.
\end{cor}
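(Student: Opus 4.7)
The plan is to rephrase both sides of the claim on the ${\mathbf U}_d(A)$-side using the isomorphism $\widetilde{\i}: A_d \to {\mathbf U}_d(A)_1$ and then invoke the two theorems already proved. Set $T = {\mathbf U}_d(A)$ and $\mf m = T_+$. By Theorem \ref{kernel-image via local cohomology}, $\widetilde{\i}$ sends $\ker(\tau_A)_d$ bijectively onto $H^0_{\mf m}(T)_1$; by Theorem \ref{annihilating every point module}, it sends $J_d$ bijectively onto ${\rm Nil}(T)_1$. Thus the corollary reduces to two purely commutative-algebra statements about $T$: (i) $H^0_{\mf m}(T) \subseteq {\rm Nil}(T)$ holds unconditionally, and (ii) ${\rm Nil}(T)_1 \subseteq H^0_{\mf m}(T)_1$ holds whenever $\Proj T$ is reduced.

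For (i), I would argue directly. Given $x$ with $\mf m^k x = 0$ and a relevant homogeneous prime $\mathfrak{p}$, relevance says $\mf m \not\subseteq \mathfrak{p}$, hence $\mf m^k \not\subseteq \mathfrak{p}$; together with $\mf m^k x \subseteq \mathfrak{p}$, primality forces $x \in \mathfrak{p}$. Intersecting over all relevant $\mathfrak{p}$ puts $x$ in ${\rm Nil}(T)$, which yields $\ker (\tau_A)_d \subseteq J_d$ and proves the first assertion of the corollary unconditionally.

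For (ii), I would exploit that $T$ is generated in degree $1$ (Proposition \ref{alt pres}) to fix degree-$1$ generators $f_1, \ldots, f_r$ of $T$, so that $\Proj T = \bigcup_j D_+(f_j)$ is an affine open cover. If $x \in {\rm Nil}(T)_1$, then for each $j$ the element $x/f_j$ is nilpotent in $T_{(f_j)} = \O_{\Proj T}(D_+(f_j))$. Since $\Proj T$ is reduced, each $T_{(f_j)}$ is reduced, so $x/f_j = 0$ in $T_{(f_j)}$, meaning $f_j^{N_j} x = 0$ for some $N_j \geq 1$. Setting $N = \max_j N_j$ and choosing $k \geq r(N-1)+1$, a pigeonhole on monomials of degree $k$ in $f_1, \ldots, f_r$ shows that every such monomial contains some $f_j$ to the power at least $N$, and hence annihilates $x$; this gives $\mf m^k x = 0$, so $x \in H^0_{\mf m}(T)_1$. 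The main obstacle I anticipate is precisely this reducedness step in (ii) --- transferring reducedness of the scheme $\Proj T$ into reducedness of each affine chart $T_{(f_j)}$ in the cover --- but it is standard once the cover by $1$-generators is in hand.
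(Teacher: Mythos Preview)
Your proof is correct and follows essentially the same route as the paper: translate via $\widetilde{\i}$ using Theorem \ref{kernel-image via local cohomology} and Theorem \ref{annihilating every point module}, then reduce to the commutative-algebra inclusion $H^0_{\mf m}(T)_1 \subseteq {\rm Nil}(T)_1$ with equality when $\Proj T$ is reduced. The paper simply asserts this last step as ``easy to prove''; you have supplied the details, and your arguments for (i) and (ii) are sound, including the pigeonhole step that uses $1$-generation of ${\mathbf U}_d(A)$.
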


\begin{proof}
If $f\in \ker (\tau_A)_d$, then by Proposition \ref{kernel-image via local cohomology}, we have $\widetilde{\i}(f)\in \ker \z({\mathbf U}_d(A))_1=H_{\mathfrak m}^0(\mathscr T_d(A))_1.$  By Theorem \ref{annihilating every point module},  ${\rm Nil}({\mathbf U}_d(A))_1=\widetilde{\i}(J_d)$. It is easy to prove
that $H_{\mathfrak m}^0({\mathbf U}_d(A))_1\subseteq{\rm Nil}({\mathbf U}_d(A))_1$, with equality when $\U_d(A)\cong \Proj {\mathbf U}_d(A)$ is reduced. Since $\widetilde{\i}$ is a $\k$-linear isomorphism, the result follows.

\end{proof}

\begin{ex}
\label{Ore case 4} The inclusion of Corollary \ref{kernel of tau} may be strict when  $\U_d(A)$ is not reduced.
The algebra $A=\k\la x,y,z\ra/\la xy-yx, zy, zx-yz\ra$ is $H(0)$ in the classification of \cite[Theorem 5.2]{CG1}. Let  $K$ be the multilinearization of $\la xy-yx, zy, zx-yz\ra$ (see Definition \ref{multilinearization ideal}). 
It is straightforward to check that when $d=3$, a Gr\"obner basis for $K$ consists of
$$y_ix_{i+1}-x_iy_{i+1},\ z_iy_{i+1},\ z_ix_{i+1}-y_iz_{i+1},\quad 0\le i\le 1$$
$$y_0^2y_1z_2,\ y_i^2z_{i+1},\ y_iy_{i+1}z_{i+1},\ x_iy_{i+1}^2z_{i+1},\qquad 0\le i\le 1$$
Let $m=\widetilde{\i}(xyz)=x_0y_1z_2\in {\mathbf U}_3(A)$. Then the Gr\"obner basis implies $m^2=x_0^2y_1^2z_2^2=0$, 
so $m\in {\rm Nil}({\mathbf U}_3(A))$, and hence $xyz \in J_3$ by Theorem \ref{annihilating every point module}. Now let $m_x=\widetilde{\i}(x^3)=x_0x_1x_2$. It is readily checked using the Gr\"obner basis that $mm_x^n\neq 0$ for all $n>0$. Therefore $m\notin H^0_{\mathfrak m}({\mathbf U}_3(A))$, and it follows that $xyz\notin\ker(\tau_A)_3$.

\end{ex}

\section{Computing Hilbert series of $\mbfB(A)$ using geometry}
\label{Hilbert series}

The four-term sequence used in the proof of Proposition \ref{kernel-image via local cohomology} is of limited use in computing the Hilbert series of $\mbfB(A)$. It is often the case that $\ker\ \t_A$ is nonzero, and in such cases $\depth\ {\mathbf U}_d(A)=0$ for $d$ sufficiently large. 
Furthermore, the condition $H^1_{\frak m}({\mathbf U}_d(A))_1=0$ often becomes impractical to check directly for large $d$. However, in some cases, it is possible to exploit the geometry of the schemes $\U_d(A)$ to compute the Hilbert series of $\mbfB(A)$. An approach to this calculation via normalization was outlined for a particular family of schemes in \cite{Walton1}, also see \cite{Walton2}. In this section, we show the normalization approach applies when the $\U_d(A)$ are reduced schemes whose normalization map $\v$ is an isomorphism off a discrete set of singular points, assuming these singular points satisfy a certain separability condition. 


\begin{ass}
\label{ass}
Throughout this section, we assume $\k$ is an algebraically closed field, and $X$ is a reduced, Noetherian scheme whose singular locus ${\rm Sing}(X)$ is discrete (hence finite). We furthermore assume that each irreducible component $W_i\subset X$ is the scheme-theoretic image of a closed immersion $f_i:W'_i \to X$ with $W'_i$ normal.
\end{ass}

Let $\v:X' \to X$ be the normalization. By Assumptions \ref{ass}, we may take $(X',\v)=\left(\coprod W'_i, \coprod f_i\right)$ where the structure maps $\psi_i:W'_i\to X'$ satisfy $\v\circ \psi = f_i$. Since $\v = \coprod f_i$, we have $\v_*\O_{X'} = \bigoplus (f_i)_*\O_{W'_i}$. Since $X$ is reduced, the induced morphism $\v^{\#}:\O_X\to \v_*\O_{X'}$ is injective. Since $\v$ is an isomorphism over the open, dense regular locus, we have a short exact sequence
$$0\to \O_X\xto{\v^{\#}} \v_*\O_{X'} \xto{\rho} \mc{Q}\to 0,$$
where $\mc{Q}=\v_*\O_{X'}/\O_X$ is supported on ${\rm Sing}(X)$. Since ${\rm Sing}(X)$ is discrete, $\mc{Q}$ is a (finite) direct sum of skyscraper sheaves supported on ${\rm Sing}(X)$.
Tensoring the sequence above with $\O_X(1)$, and adopting the usual notation $\mc{F}(1):=\mc{F}\tsr_{\O_X}\O_X(1)$ for an $\O_X$-module $\mc{F}$, yields
$$0\to \O_X(1)\xto{\v^{\#}\tsr 1} \v_*\O_{X'}(1) \xto{\rho\tsr 1} \mc{Q}(1)\to 0.$$
We note that, by the projection formula, there is an isomorphism of $\O_X$-modules
$$\a: \bigoplus (f_i)_*f_i^*(\O_{X}(1)) \xto{\cong} \v_*\O_{X'}(1) .$$

While the notation $\mc{F}(1)$ is unambiguous when $\mc{F}=\O_X$, the same cannot be said of the expression $f_i^*\mc{F}(1)$. As $f_i^*(\O_X(1))$ makes later expressions somewhat tedious to parse, we henceforth abbreviate $\mc{F}_i=f_i^*(\O_X(1))$.

For a scheme $Y$ and an $\O_Y$-module $\mc{F}$, we write $h^0_Y(\mc{F}) = \dim_{\k} H^0(Y,\mc{F}).$

\begin{lemma}
\label{dimension count}
Let $X$ be a scheme satisfying Assumptions \ref{ass}. If $H^0(\rho\tsr 1)$ is surjective, then 
$$h^0_X(\O_X(1)) = \sum h^0_{W'_i}(\mc{F}_i) - \sum_{p\in {\rm Sing}(X)} \dim_{\k} \mc{Q}_p$$
\end{lemma}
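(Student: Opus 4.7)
The plan is to read the dimension count directly off the long exact sequence in sheaf cohomology associated with the short exact sequence
$$0\to \O_X(1)\xto{\v^{\#}\tsr 1} \v_*\O_{X'}(1) \xto{\rho\tsr 1} \mc{Q}(1)\to 0$$
displayed just before the lemma. Taking $H^0$ yields an exact sequence beginning
$$0 \to H^0(X,\O_X(1)) \to H^0(X,\v_*\O_{X'}(1)) \xto{H^0(\rho\tsr 1)} H^0(X,\mc{Q}(1)) \to H^1(X,\O_X(1)).$$
By the surjectivity hypothesis on $H^0(\rho\tsr 1)$, the image of the connecting map is trivial, so we obtain a short exact sequence of finite-dimensional $\k$-vector spaces
$$0 \to H^0(X,\O_X(1)) \to H^0(X,\v_*\O_{X'}(1)) \to H^0(X,\mc{Q}(1)) \to 0,$$
and the identity $h^0_X(\O_X(1)) = h^0_X(\v_*\O_{X'}(1)) - h^0_X(\mc{Q}(1))$ will follow from additivity of dimension.

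Next I would identify the two terms on the right in the desired form. For the middle term, I apply the isomorphism $\a:\bigoplus (f_i)_*\mc{F}_i \xto{\cong} \v_*\O_{X'}(1)$ recalled just before the lemma. Since each $f_i:W_i'\to X$ is a closed immersion, pushforward along it is exact and preserves global sections, so
$$h^0_X(\v_*\O_{X'}(1)) = \sum_i h^0_X((f_i)_*\mc{F}_i) = \sum_i h^0_{W'_i}(\mc{F}_i).$$
For the rightmost term, $\mc{Q}$ is (by the paragraph preceding the lemma) a finite direct sum of skyscraper sheaves supported at the points of ${\rm Sing}(X)$. Since $\O_X(1)$ is invertible, the stalk $\O_X(1)_p$ is a free $\O_{X,p}$-module of rank $1$, so $\mc{Q}(1)_p \cong \mc{Q}_p$ as $\k$-vector spaces for each $p\in{\rm Sing}(X)$. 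Consequently $\mc{Q}(1)$ is again a direct sum of skyscraper sheaves, and
$$h^0_X(\mc{Q}(1)) = \sum_{p\in {\rm Sing}(X)} \dim_\k \mc{Q}(1)_p = \sum_{p\in {\rm Sing}(X)} \dim_\k \mc{Q}_p.$$
Substituting these two computations into the dimension identity from the short exact sequence gives exactly the formula in the lemma.

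There is no real obstacle here; the only point requiring care is making sure the projection-formula isomorphism $\a$ is used correctly in light of the fact that $\O_{X'}(1)$ in the statement means $\v^*\O_X(1)$ (so that $\O_{X'}(1)|_{W'_i}=\psi_i^*\v^*\O_X(1)=f_i^*\O_X(1)=\mc{F}_i$), and that $\O_X(1)$ being invertible is what makes the skyscraper stalks $\mc{Q}(1)_p$ and $\mc{Q}_p$ abstractly isomorphic as vector spaces. Both of these are routine.
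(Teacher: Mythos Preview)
Your proof is correct and follows essentially the same route as the paper: obtain the short exact sequence on $H^0$ from the surjectivity hypothesis, identify the middle term via the projection-formula isomorphism $\a$ together with $H^0(X,(f_i)_*\mc{F}_i)=H^0(W'_i,\mc{F}_i)$, and identify the right term using that $\mc{Q}(1)\cong\mc{Q}$ because $\O_X(1)$ is invertible and $\mc{Q}$ is a sum of skyscrapers. The only cosmetic difference is that you pass through the long exact sequence explicitly before truncating, whereas the paper writes down the short exact sequence of global sections directly.
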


\begin{proof}
Since $H^0(\rho\tsr 1)$ is surjective, the sequence 
$$0\to H^0(X,\O_X(1))\xto{H^0(\v^{\#}\tsr 1)} H^0(X,\v_*\O_{X'}(1)) \xto{H^0(\rho\tsr 1)} H^0(X,\mc{Q}(1))\to 0$$
is exact. Since $H^0(\a)$ is an isomorphism, we have
$$H^0(X,\v_*\O_{X'}(1))\cong H^0(X,\bigoplus (f_i)_*\mc{F}_i)\cong \bigoplus H^0(W'_i,  \mc{F}_i).$$
Finally, since $\O_X(1)$ is an invertible $\O_X$-module and since $\mc{Q}$ is a sum of skyscraper sheaves, $\mc{Q}(1)\cong \mc{Q}$ as $\O_X$-modules. The result follows from a dimension count of the short exact sequence.

\end{proof}

\begin{notn}
\label{trivializations} Let $x\in X$.
Since $f_i$ is a closed immersion, there is at most one $w\in W'_i$ such that $f_i(w)=x$. If such an element exists, we denote it $x'_i$, or just $x'$ when there is no confusion about the component in question.

Let $I_x=\{i : f_i^{-1}(x)\text{ is nonempty}\}$.
For each $i\in I_x$, since $f_i$ is a closed immersion, the canonical $\O_{X,x}$-module map $\g^i_{x,x'}:((f_i)_*\mc{F}_i)_x\to (\mc{F}_i)_{x'_i}$ is an isomorphism. (Note that $\O_{X,x}$ acts on $(\mc{F}_i)_{x'_i}$ via $(f_i^{\#})_{x'_i}$.) Let $\g_{x,x'} = \bigoplus_{I_x} \g^i_{x,x_i'}$. Then $\g_{x,x'}$ is also an isomorphism.

\end{notn}

\begin{defn}
\label{separable singularities}
Assume $X$ satisfies Assumptions \ref{ass}. We say the normalization map $\v:X'\to X$ \emph{separates singularities} if there 
exists a function $\t:{\rm Sing}(X) \to \bigcup H^0(W'_j,\mc{F}_j)$ with the property that for all $p\in {\rm Sing}(X)$, if $s=\t(p)\in H^0(W'_i,\mc{F}_i),$ then $s_{q'}\in \mf{m}_{\O_{W'_i,q'}}(\mc{F}_i)_{q'}$ if and only if $q'\neq p'$.
\end{defn}

We note that if $|W_i\cap {\rm Sing}(X)|\le 2$ for every component $W_i$, then $\v:X'\to X$ separates singularities, since $\O_{W'_i}(1)$ separates points. 

\begin{prop}
\label{constructing sections}
Assume $X$ satisfies Assumptions \ref{ass} and that $\v$ separates singularities. If $p\in {\rm Sing}(X)$ satisfies $\dim_{\k} \mc{Q}_p = 1$, then there exists a section $t\in\im\ H^0(\rho\tsr 1)$ such that ${\rm Supp}(t)=\{p\}$ and $t_p$ generates $\mc{Q}(1)_p$ as an $\O_{X,p}$-module.
\end{prop}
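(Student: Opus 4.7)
My plan is to construct $t$ explicitly by ``pushing forward'' the separating section $\t(p)$ from a single component. Let $\sigma=\t(p)\in H^0(W_{i_0}',\mcF_{i_0})$; by the definition of separation, $p\in W_{i_0}$ and hence $i_0\in I_p$. Define $\tilde t\in H^0(X,\v_*\O_{X'}(1))$ to be the image under $H^0(\a)$ of the tuple in $\bigoplus_j H^0(W_j',\mcF_j)$ whose $i_0$-component is $\sigma$ and whose other components are zero, and set $t:=H^0(\rho\tsr 1)(\tilde t)$. I would then verify the two required properties separately: ${\rm Supp}(t)\subseteq\{p\}$ and that $t_p$ generates $\mc{Q}(1)_p$.

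For the support property, since $\mc{Q}(1)$ is supported on ${\rm Sing}(X)$, it suffices to show $t_q=0$ for every $q\in{\rm Sing}(X)\setminus\{p\}$. Using the isomorphism $\g_{q,q'}$ from Notation \ref{trivializations}, identify $(\tilde t)_q$ with the tuple $((s_i)_{q_i'})_{i\in I_q}\in \bigoplus_{i\in I_q}(\mcF_i)_{q_i'}$ where $s_{i_0}=\sigma$ when $i_0\in I_q$ and all other $s_i$ vanish. When $i_0\notin I_q$, the tuple is zero and $t_q=0$ immediately. When $i_0\in I_q$, the separation hypothesis forces $\sigma_{q_{i_0}'}\in \mathfrak{m}_{\O_{W_{i_0}',q_{i_0}'}}(\mcF_{i_0})_{q_{i_0}'}$; I would show that a tuple with a single maximal-ideal entry and zeros in all other slots lies in the image of $(\v^{\#}\tsr 1)_q$, using that $f_{i_0}$ is a closed immersion (so $\O_{X,q}\twoheadrightarrow \O_{W_{i_0},q}$) together with the reduced structure of $X$ at $q$ to produce a local preimage in $\O_X(1)_q$ sitting inside the ideal of the other branches through $q$.

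For the generation property at $p$, the same identification via $\g_{p,p'}$ yields a tuple whose $i_0$-component is the generator $\sigma_{p_{i_0}'}$ of $(\mcF_{i_0})_{p_{i_0}'}$, with zeros in every other slot. Any preimage of this tuple in $\O_X(1)_p$ would correspond (after trivializing) to a single element of $\O_{X,p}$ that restricts to a unit on $\O_{W_{i_0},p}$ but to zero on each $\O_{W_i,p}$ for $i\in I_p\setminus\{i_0\}$; reducedness of $X$ then forces any such element to evaluate nonzero at $p$ on \emph{every} branch through $p$, contradicting its vanishing on the other branches. Hence $t_p\neq 0$ in $\mc{Q}(1)_p$, and since $\dim_{\k}\mc{Q}(1)_p=\dim_{\k}\mc{Q}_p=1$ (twisting a skyscraper by an invertible sheaf preserves $\k$-dimension), $t_p$ generates $\mc{Q}(1)_p$ as an $\O_{X,p}$-module.

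The main obstacle is the local argument at each $q\neq p$ translating ``$\sigma$ vanishes at $q_{i_0}'$'' into ``$(\tilde t)_q$ lies in the image of $(\v^{\#}\tsr 1)_q$'': this requires care about how $\mathfrak{m}_{\O_{X,q}}\cdot \O_{W_i',q_i'}$ relates to $\mathfrak{m}_{\O_{W_i',q_i'}}$ through the closed immersion $f_i$, and about the local geometry of the other branches meeting $W_{i_0}$ at $q$. The separating-singularities hypothesis is designed to make this local lifting feasible, by ensuring that $\sigma$ has no obstruction coming from the other singular preimages on $W_{i_0}'$; the remaining local step should follow from reducedness of $X$ together with the structure of the closed immersions $f_i$ supplied by Assumptions \ref{ass}.
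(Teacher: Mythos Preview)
Your construction of $t$ and your argument that $t_p$ generates $\mc{Q}(1)_p$ are exactly the paper's. The gap is in the support step at a singular $q\neq p$ with $i_0\in I_q$. You want the tuple with $\sigma_{q'}$ in slot $i_0$ and zeros elsewhere to already lie in $\im(\v^{\#}\tsr 1)_q$; after trivializing, this asks for $a\in\bigcap_{i\neq i_0}\mf p_i\subset\O_{X,q}$ with $a\equiv\sigma_{q'}\pmod{\mf p_{i_0}}$. But the image of $\bigcap_{i\neq i_0}\mf p_i$ in $\O_{W_{i_0},q}$ need not be all of $\mf m_{W_{i_0},q}$: for three concurrent lines through $q$ it is only $\mf m_{W_{i_0},q}^2$, so a separating section vanishing to order exactly one at $q'$ cannot be lifted. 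Neither reducedness of $X$ nor the closed-immersion hypothesis on the $f_i$ closes this; your ``remaining local step'' does not follow from those ingredients.

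The paper's argument avoids lifting entirely. Since $f_{i_0}^{\#}:\O_{X,q}\to\O_{W'_{i_0},q'}$ is a local surjection, one has $\mf m_{\O_{W'_{i_0},q'}}(\mcF_{i_0})_{q'}=\mf m_{\O_{X,q}}\cdot(\mcF_{i_0})_{q'}$; the separating hypothesis then gives $(\tilde t)_q\in\mf m_{\O_{X,q}}(\v_*\O_{X'}(1))_q$, whence $t_q\in\mf m_{\O_{X,q}}\mc{Q}(1)_q$. Invoking $\dim_{\k}\mc{Q}(1)_q=1$, Nakayama gives $\mf m_{\O_{X,q}}\mc{Q}(1)_q=0$, so $t_q=0$. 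Note this uses $\dim_{\k}\mc{Q}_q=1$ at \emph{every} such $q$, not only at $p$; the proposition is in fact only applied under that blanket hypothesis in Theorem~\ref{main hs theorem}. Under that hypothesis your lifting would also go through, since $\dim_{\k}\mc{Q}_q=1$ forces $|I_q|=2$ and $\mf p_{i_0}+\mf p_j=\mf m_q$, so the image of $\mf p_j$ in $\O_{W_{i_0},q}$ is the full maximal ideal --- but you have not identified or used this, and without it the lifting genuinely fails.
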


\begin{proof}
Let $i\in I_p$ and $s\in H^0(W'_i,\mc{F}_i)$ satisfy the condition of Definition \ref{separable singularities}. Extend $s$ to a section $$\overline{s}\in \bigoplus H^0(W'_i, \mc{F}_i) = H^0\left(X, \bigoplus (f_i)_*\mc{F}_i\right)$$ by setting $\overline{s}=(0,\ldots, s,\ldots, 0)$, where $s$ is in the $i$-th position. Let $s' =H^0(\a)(\overline{s})\in H^0(X, \v_*\O_{X'}(1))$ and define $t=H^0(\rho\tsr 1)(s')$.

It is clear by construction that $t_q=0$ for $q\notin W_i$. For any $q\in W_i\cap {\rm Sing}(X)$ and $q'=f_i^{-1}(q)$ we have the following sequence of $\O_{X,q}$-module maps
$$(\mc{F}_i)_{q'} \xto{\i_i}\bigoplus_{I_q} (\mc{F}_i)_{q'} \xto{\g^{-1}_{q,q'}} \bigoplus_{I_q} ((f_i)_*\mc{F}_i)_q\xto{\a_q}  (\v_*\O_{X'}(1))_q\xto{\rho_q\tsr 1}\mc{Q}(1)_q$$
where $\i_i$ is the canonical inclusion. Recall that $\rho_q\tsr 1$ is surjective. Since the $\O_{X,q}$-module structure on each $(\mc{F}_j)_{q'}$ is induced by the local homomorphism $(f_j^{\#})_{q'}:\O_{X,q}\to \O_{W'_j,q'}$, it follows that $s_{q'}\in \mf{m}_{\O_{W'_j,q'}}(\mc{F}_j)_{q'}$ if and only if $t_q\in \mf{m}_{\O_{X,q}}\mc{Q}(1)_q$.
By assumption, $s_{q'}\in \mf{m}_{\O_{W'_i,q'}}(\mc{F}_j)_{q'}$ if and only if $q\neq p$. Since $s_{p'}\notin \mf{m}_{\O_{W'_i,p'}}(\mc{F}_j)_{p'}$, it follows that $t_p\neq 0$ and $p\in {\rm Supp}(t)$. Finally, since $\mc{Q}(1)_q\cong \mc{Q}_q$ and since we assume $\dim_{\k} \mc{Q}_q=1$, the Nakayama lemma implies $\mf{m}_{\O_{X,q}}\mc{Q}(1)_q=0$. Consequently, $t_q=0$ if $q\neq p$, and $t_p$ generates $\mc{Q}(1)_p$. The result follows. 

\end{proof}

\begin{thm}\label{main hs theorem}
Let $X$ be a scheme satisfying Assumptions \ref{ass}. If $\v$ separates singularities and $\dim_{\k} \mc{Q}_p=1$ for all $p\in {\rm Sing}(X)$, then $H^0(\rho\tsr 1)$ is surjective and
$$h^0_X(\O_X(1)) = \sum_i h^0_{W'_i}(\mc{F}_i) - |{\rm Sing}(X)|.$$
\end{thm}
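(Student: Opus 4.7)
The plan is to reduce everything to the surjectivity of $H^0(\rho\tsr 1)$, since once that is established Lemma \ref{dimension count} delivers the dimension formula immediately: the hypothesis $\dim_{\k}\mc{Q}_p = 1$ converts $\sum_{p\in {\rm Sing}(X)}\dim_{\k}\mc{Q}_p$ into $|{\rm Sing}(X)|$. So the entire thrust of the proof is to show that every global section of $\mc{Q}(1)$ lifts to a global section of $\v_*\O_{X'}(1)$.

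First I would analyze $H^0(X,\mc{Q}(1))$ directly. Because $\mc{Q}$ is a finite direct sum of skyscraper sheaves supported on the discrete set ${\rm Sing}(X)$, and twisting by the invertible sheaf $\O_X(1)$ does not change a skyscraper sheaf (an isomorphism $\mc{Q}(1)\cong \mc{Q}$ of $\O_X$-modules was noted just before Lemma \ref{dimension count}), we obtain a canonical decomposition
\[
H^0(X,\mc{Q}(1)) \;\cong\; \bigoplus_{p\in {\rm Sing}(X)} \mc{Q}(1)_p,
\]
a $\k$-vector space whose dimension equals $\sum_{p\in {\rm Sing}(X)}\dim_{\k}\mc{Q}(1)_p = |{\rm Sing}(X)|$ by hypothesis.

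Next I would produce a spanning set for this space inside $\im H^0(\rho\tsr 1)$. For each $p\in {\rm Sing}(X)$, Proposition \ref{constructing sections} applies (its hypotheses are precisely those of the theorem) and yields a section $t^{(p)}\in \im H^0(\rho\tsr 1)$ such that ${\rm Supp}(t^{(p)}) = \{p\}$ and such that the stalk $t^{(p)}_p$ generates $\mc{Q}(1)_p$ as an $\O_{X,p}$-module. Under the decomposition above, $t^{(p)}$ is zero on every summand except the one at $p$, where it is a nonzero (hence $\k$-basis) element of the one-dimensional space $\mc{Q}(1)_p$. Therefore $\{t^{(p)}\}_{p\in {\rm Sing}(X)}$ is a basis of $H^0(X,\mc{Q}(1))$, proving that $H^0(\rho\tsr 1)$ is surjective.

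Finally, with surjectivity in hand, Lemma \ref{dimension count} gives
\[
h^0_X(\O_X(1)) \;=\; \sum_i h^0_{W'_i}(\mc{F}_i) \;-\; \sum_{p\in {\rm Sing}(X)} \dim_{\k}\mc{Q}_p \;=\; \sum_i h^0_{W'_i}(\mc{F}_i) - |{\rm Sing}(X)|,
\]
which is the desired formula. The only step that carries any real content has already been dispatched in Proposition \ref{constructing sections}; the present argument is an assembly of pieces, and no further obstacle is expected.
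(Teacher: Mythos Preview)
Your proposal is correct and follows essentially the same route as the paper's own proof: use Proposition \ref{constructing sections} to produce, for each singular point $p$, a section $t^{(p)}\in \im H^0(\rho\tsr 1)$ supported only at $p$ and generating the one-dimensional stalk $\mc{Q}(1)_p$; conclude these form a basis of $H^0(X,\mc{Q}(1))$, giving surjectivity; then invoke Lemma \ref{dimension count}. The only cosmetic difference is that the paper separates out the case ${\rm Sing}(X)=\emptyset$ explicitly, whereas your argument absorbs it (correctly) into the general case.
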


\begin{proof} If ${\rm Sing}(X)$ is empty, then $X$ is regular, hence normal, and $\v$ is an isomorphism. The conclusions are trivial in this case.
 
If ${\rm Sing}(X)$ is nonempty, enumerate the singular points ${\rm Sing}(X)=\{p_1,\ldots, p_{\ell}\}$. By Proposition \ref{constructing sections}, for each $1\le j\le \ell$, there exists a nonzero section $t_j \in H^0(X, \mc{Q}(1))$ such that ${\rm Supp}(t_j)=\{p_j\}$. The $t_j$ are linearly independent, since they are nonzero and their supports are disjoint. By assumption, $\dim_{\k} \mc{Q}_{p_j}=1$ for all $j$, so $$\dim_{\k} H^0(X,\mc{Q}(1))=\sum_{j=1}^{\ell} \dim_{\k} \mc{Q}(1)_{p_j}=\ell,$$ and the sections $t_j$ span $H^0(X,\mc{Q}(1))$. Hence $H^0(\rho\tsr 1)$ is surjective, and the result follows from Lemma \ref{dimension count}.

\end{proof}

\begin{cor}\label{key result}
Assume that $X=\Proj S$ satisfies Assumptions \ref{ass}. Suppose that for each irreducible component $W_i \subset X$, there exists a surjective graded ring homomorphism $\varphi_i:S\to S_i$ such that $W'_i=\Proj S_i$ is normal, and the closed immersion $f_i:W'_i\to X$ induced by $\varphi_i$ has image equal to $W_i$. If $\v=\coprod f_i$ separates singularities and $\dim_{\k} \mc{Q}_p=1$ for all $p\in {\rm Sing}(X)$, then
$$h^0_X(\O_X(1)) = \sum_i h^0_{W'_i}(\O_{W'_i}(1)) - |{\rm Sing}(X)|.$$
\end{cor}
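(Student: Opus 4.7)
The plan is to deduce the corollary essentially directly from Theorem \ref{main hs theorem}. The hypotheses of Assumptions \ref{ass} are in force by assumption, the normalization map $\v = \coprod f_i$ is assumed to separate singularities, and $\dim_{\k} \mc{Q}_p = 1$ is assumed at every singular point. So Theorem \ref{main hs theorem} yields
$$h^0_X(\O_X(1)) = \sum_i h^0_{W'_i}(\mc{F}_i) - |{\rm Sing}(X)|,$$
where $\mc{F}_i = f_i^*(\O_X(1))$. The only task remaining is to identify each $\mc{F}_i$ with $\O_{W'_i}(1)$.

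For this I would appeal to the standard fact, already invoked in Lemma \ref{naturality of four-term sequence}, that a surjective graded ring homomorphism $\varphi_i: S \to S_i$ induces a closed immersion $f_i: \Proj S_i \to \Proj S$ satisfying $f_i^*(\O_X(n)) \cong \O_{W'_i}(n)$ for all $n \in \Z$ (see \cite[Lemma 27.11.5]{stacks}). Applying this with $n = 1$ gives $\mc{F}_i \cong \O_{W'_i}(1)$, and substituting this isomorphism into the formula above gives the stated identity.

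The step in the proof that actually requires content is entirely packaged inside Theorem \ref{main hs theorem}; the corollary is just a cleaner restatement under the additional assumption that each normalized component is cut out by a surjection of graded rings, so that its twisting sheaf is canonically $\O_{W'_i}(1)$ rather than an abstract pullback. Thus there is no genuine obstacle — the proof is a one-line deduction once one notes the compatibility of pullback of $\O(1)$ with the $\Proj$ construction for closed immersions induced by surjections of graded rings.
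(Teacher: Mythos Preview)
Your proposal is correct and matches the paper's intended argument: the corollary is stated without proof in the paper, as it follows immediately from Theorem \ref{main hs theorem} once one observes that $f_i^*(\O_X(1)) \cong \O_{W'_i}(1)$ for a closed immersion induced by a graded ring surjection. Your invocation of Lemma \ref{naturality of four-term sequence} (and the underlying \cite[Lemma 27.11.5]{stacks}) for this identification is exactly the right justification.
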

%
%
%

\section{Examples}
\label{examples}

We now present three examples that illustrate the results of the preceding sections. Example 1 is a comprehensive calculation using all of the main theorems of the paper to prove that $\t_A:A\to \mathbf{B}(A)$ is surjective and to compute its kernel. In Example 2, $\t_A$ is shown to be an isomorphism by establishing regular sequences in each $\mathbf{U}_d(A)$ and appealing to  Proposition \ref{kernel-image via local cohomology}. And in Example 3, we define an algebra $A$ for which $\t_A$ is neither injective nor surjective.

Throughout this section, we use $\Proj \mathbf{U}_d(A)$ as a model for $\U_d(A)$; see Proposition \ref{model for Gd}. Also recall from Section \ref{structure} the $\k$-linear isomorphism $\widetilde{\i}: A_d \to {\mathbf U}_d(A)_1$. Via the isomorphism of Proposition \ref{alt pres}, we henceforth view $\widetilde{\i}$ as a map $A_d\to S'_1\subset S/K$ in the notation of Section \ref{Td}. 

In each example, the algebra $A$ is presented by generators and relations as a quotient of the free algebra $\k\la x,y,z\ra$. If $w\in A_d$ is a monomial, meaning it can be represented by a word $w_0w_1\cdots w_{d-1}$ where $w_i\in \{x,y,z\}$, we suppress tensors and write $\widetilde{\i}(w)=w_0w_1\cdots w_{d-1}\in S'_1.$ In Examples 1 and 2, $A$ has a  monomial $\k$-basis of the form $\{x^iy^jz^k\ :\ i, j, k\in \N\}.$ In these cases, it is convenient to set $m_{ijk}=\widetilde{\i}(x^iy^jz^k).$

\subsection{Example 1.}
Let $$A = \k \la x, y, z \ra/\la xy-yx, yz-zy, zx \ra.$$ This is the algebra $S'(0,0)$ in the classification of \cite[Theorem 5.2]{CG1}; it is a twisted tensor product of $\k[x,y]$ and $\k[z]$. In particular, $\{x^i y^j z^k\}$ is a $\k$-linear basis of $A$ and the Hilbert series of $A$ is $(1-t)^{-3}$.  Our goal is to show that the algebra $\mbfB(A)$ is $1$-generated, and to 
prove that $\mbfB(A)\cong A/\la xyz \ra$. 

As discussed above, for any $(i, j, k) \in \N^3$, $i+j+k = d$, let $m_{i, j, k} = \widetilde{\i}(x^iy^jz^k)$. We need the following technical lemma.

\begin{lemma}\label{zero products}
Let $d \geq 2$. Let $\frak{m} = {\mathbf U}_d(A)_{+}$. The following hold:
\begin{itemize}
\item[(1)] if $ijk \geq 1$, then $\frak{m}$ annihilates $m_{i, j, k}$;
\item[(2)] $m_{d,0,0}$ annihilates $m_{0,i,d-i}$ for $i < d$, and $m_{i,0,d-i}$ for $i \leq d-2$;
\item[(3)] $m_{0,d,0}$ annihilates $m_{i,0,d-i}$ for $1 \leq i \leq d-1$;
\item[(4)] $m_{0,0,d}$ annihilates $m_{i,d-i,0}$ for $1 \leq i \leq d-1$, and $m_{i,0,d-i}$ for $i \geq 2$;
\item[(5)] for all $1 \leq i \leq d-1$, $m_{i,0,d-i}$ annihilates $m_{j,d-j,0}$ for $j < d$, $m_{0,j,d-j}$ for $j > 0$, and $m_{j,0,d-j}$ for $j \notin \{i-1, i, i+1\}$;
\item[(6)] if $i \geq 1$ and $j < d$, then $m_{i,d-i,0} m_{0,j,d-j} = 0$.
\end{itemize}
\end{lemma}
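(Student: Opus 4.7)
The plan is to work in the presentation of ${\mathbf U}_d(A)$ afforded by Proposition \ref{alt pres}, namely as $S' \subset S/K$, where $S = \k[x_p, y_p, z_p : 0 \le p \le d-1]$ and $K$ is the multilinearization ideal of $I = \la xy-yx, yz-zy, zx\ra$. Since $A$ is quadratic, $K$ is generated by the three families
\[
x_p y_{p+1} - y_p x_{p+1}, \quad y_p z_{p+1} - z_p y_{p+1}, \quad z_p x_{p+1}, \qquad 0 \le p \le d-2.
\]
Using the normal form $x^i y^j z^k$ in $A$, a convenient representative of $m_{i,j,k}$ in $S/K$ is the explicit monomial $x_0 \cdots x_{i-1}\, y_i \cdots y_{i+j-1}\, z_{i+j} \cdots z_{d-1}$. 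Each product $m_{i,j,k}\, m_{i',j',k'}$ is therefore an explicit monomial in $S$, and each claim amounts to showing that this monomial lies in $K$.

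Operationally, the first two families of generators of $K$ act as \emph{swap moves}: at adjacent positions $p, p+1$ one may exchange an $x$ on the left with a $y$ on the right, or a $y$ on the left with a $z$ on the right. The third family gives a \emph{killing rule}: any monomial containing both $z_p$ and $x_{p+1}$ as factors lies in $K$. Thus to verify that a given product vanishes, it suffices to apply a short sequence of swap moves to produce an adjacent $z_p x_{p+1}$ factor. I would track the ``configuration'' of a product as the unordered pair of letters at each position $p$ (one from each factor), and at each step identify a pair of adjacent positions whose letters admit a swap moving one closer to an adjacent $z_p x_{p+1}$.

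Claims (2), (3), (4), and (6) all involve products in which at least one factor is uniform or has a two-block structure, such as $m_{d,0,0}$, $m_{0,d,0}$, $m_{0,0,d}$, $m_{i,d-i,0}$, or $m_{0,j,d-j}$. In each case the position-wise letter pairs of the product have a simple pattern, and a $z_p x_{p+1}$ factor can be located either immediately or after a single swap near the block boundary of the other factor. Claim (5) concerns $m_{i,0,d-i}$, which has no $y$'s; when multiplied with another $m_{j,0,d-j}$, no swap moves are available, so the product vanishes precisely when the $x$-to-$z$ transition positions $i$ and $j$ are far enough apart that an adjacent $z_p x_{p+1}$ appears directly in the product. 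The exclusion $j \notin \{i-1,i,i+1\}$ is exactly the condition that forces such a factor.

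The main obstacle is claim (1), where both $(i,j,k)$ and $(i',j',k')$ are essentially arbitrary. Here one must case-split on how the $x$-, $y$-, and $z$-blocks of $m_{i',j',k'}$ sit relative to the two transitions of $m_{i,j,k}$ at positions $i$ and $i+j$. The hypothesis $i,j,k \ge 1$ guarantees that $m_{i,j,k}$ contains the local pattern $x_{i-1} y_i \cdots y_{i+j-1} z_{i+j}$, and this ``$xyz$-sandwich'' always provides the room to perform one or two swaps (local to positions near $i-1, i, i+j-1, i+j$) bringing a $z$ and an $x$ to adjacent positions; the remaining bookkeeping is straightforward but must be carried out for each configuration of the bottom factor.
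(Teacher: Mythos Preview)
Your proposal is correct and follows essentially the same approach as the paper: both work in $S/K$ via Proposition \ref{alt pres}, exploit the swap relations $x_p y_{p+1} = y_p x_{p+1}$, $y_p z_{p+1} = z_p y_{p+1}$ and the killing relation $z_p x_{p+1} = 0$, and reduce each product to one containing an adjacent $z_p x_{p+1}$. The only presentational difference is that the paper first records six auxiliary identities of the form $x_a y_b z_c = 0$ (with $\{a,b,c\}\subset\{p,p+1\}$) and then invokes them in the case analysis for (1), whereas you plan to carry out the swap-move argument directly in each case; this is the same computation organized differently. Both treatments give full details only for (1) and leave (2)--(6) to the reader.
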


\begin{proof}

Let $S=\Sym(A_1)^{\tsr d}$ and let $K$ be the multilinearization of the ideal $\la xy-yx, yz-zy, zx \ra$, as in Definition \ref{multilinearization ideal}. Recall that ${\mathbf U}_d(A)$ is isomorphic to the subalgebra of $S/K$ generated by the $m_{i,j,k}$ by Proposition \ref{alt pres}. In the ring $S/K$, for any index $i$, $0 \leq i \leq d-1$ we have 
\begin{align*}
&x_i y_i z_{i+1} = x_i y_{i+1} z_i = x_{i+1} y_i z_i = 0, \\
&x_i y_{i+1} z_{i+1} = x_{i+1} y_i z_{i+1} = x_{i+1} y_{i+1} z_i = 0.
\end{align*}

For (1), let $(i, j, k) \in \N^3$ and $ijk \geq 1$. Then note that $m_{i, j, k}$ contains $x_{i-1} y_i z_{i+1}$. Let $w' = m_{i', j', k'}$ for some arbitrary $(i', j', k') \in \N^3$, and let $w = w'm_{i,j,k}$. We consider three cases. First, if $i'-1 \geq i$, then $x_i$ occurs in $w'$. So $x_i y_i z_{i+1}$ is in $w$, hence $w = 0$. Second, if $i' \leq i$ and $i'+j' \leq i$, then $z_i$ occurs in $w'$. So $x_{i-1} y_i z_i$ is in $w$, hence $w = 0$. Third, if $i' \leq i$ and $i'+j' \geq i+1$, then $y_i$ is in $w'$. So $x_{i-1} y_i z_{i+1} y_i$ is in $w$. We have $$x_{i-1} y_i z_{i+1} y_i = x_{i-1} y_i z_i y_{i+1} = 0,$$ hence $w = 0$. 

Proofs of statements (2)-(6) are similar and are left to the reader.

\end{proof}

Next we determine the closed points of the scheme $\U_d(A)$.

 \begin{prop}\label{type (2) Ore point scheme}
Let $d \geq 2$. Let $e_1 = [1:0:0]$, $e_3 = [0:0:1]\in \P^2$. The set of closed points of the  scheme $\U_d(A) \subset (\P^2)^{\times d}$ consists of the following:
\begin{align*}
& [x:y:0]^{d} \text{ for all } [x:y] \in \P^1; \\
& [0:y:z]^{d} \text { for all } [y:z] \in \P^1; \\
& e_1^{i} \times [x: 0: z] \times e_3^{d-i-1} \text{ for all } [x:z] \in \P^1 \text{ and } 0 \leq i \leq d-1.
\end{align*}
\end{prop}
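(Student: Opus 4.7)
The plan is to identify the closed points of $\U_d(A) \subseteq (\P^2)^{\times d}$ with those $d$-tuples $(q_1, \ldots, q_d)$ whose consecutive pairs satisfy the multilinearized quadratic relations of $A$. Since $A$ is quadratic, the ideal $I = \ker \pi_A$ is generated by $I_2$, so $\U_d(A)$ is the locus where, for each adjacent pair $(q_i, q_{i+1}) = ([a_i:b_i:c_i],[a_{i+1}:b_{i+1}:c_{i+1}])$,
$$a_i b_{i+1} = b_i a_{i+1}, \qquad b_i c_{i+1} = c_i b_{i+1}, \qquad c_i a_{i+1} = 0.$$
The problem therefore reduces to classifying admissible ordered pairs in $\P^2 \times \P^2$ and then concatenating them into length-$d$ sequences.

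The first step is a coordinate-level case analysis determining the admissible transitions $p \mapsto q$. Writing $L_1 = V(z)$, $L_2 = V(x)$, $L_3 = V(y)$, one checks directly that no point of $\P^2$ with all three projective coordinates nonzero occurs on either side of an admissible pair, and that the remaining transitions are
\begin{itemize}
\item $p \in L_1 \setminus \{e_1\}$ forces $q = p$;
\item $p \in L_2$ forces $q = p$;
\item $p = e_1$ allows any $q \in L_3$;
\item $p \in L_3 \setminus \{e_1, e_3\}$ forces $q = e_3$.
\end{itemize}
Reading the same table backward also restricts predecessors: the only admissible $p$ for $q \in L_1 \setminus \{e_1\}$ or $q \in L_2 \setminus \{e_3\}$ is $p = q$, and the only admissible $p$ for $q \in \{e_1\} \cup (L_3 \setminus \{e_1, e_3\})$ is $p = e_1$.

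The second step is to assemble admissible pairs into sequences of length $d$. If some $q_i$ lies in $L_1 \setminus \{e_1\}$ or $L_2 \setminus \{e_3\}$, the forward rule and the predecessor rule together force $q_j = q_i$ for every $j$, giving the first two families of the proposition. Otherwise every entry lies in $L_3$ (which contains both $e_1$ and $e_3$). Let $m$ be the smallest index with $q_m = e_3$, setting $m = d+1$ if none exists. When $m = 1$, the self-transition rule for $e_3 \in L_2$ yields $q_k = e_3$ for all $k$, matching the third family at $i = 0$, $[x:0:z] = e_3$. When $m \ge 2$, each of $q_1, \ldots, q_{m-2}$ lies in $\{e_1\} \cup (L_3 \setminus \{e_1, e_3\})$ and so has $e_1$ as its only admissible predecessor, forcing all of them to equal $e_1$; the remaining entry $q_{m-1}$ can be any point of $L_3 \setminus \{e_3\}$, since its predecessor $q_{m-2} = e_1$ transitions to all of $L_3$, and the forward rules then give $q_m = \cdots = q_d = e_3$. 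Setting $i = m-2$ and $[x:0:z] = q_{m-1}$ recovers the third family, with the exceptional case $m = d+1$ (no $e_3$ appears) absorbed into the parametrization at $i = d-1$.

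The main obstacle is purely organizational: the pair analysis is elementary but involves several near-degenerate subcases at the intersections of the coordinate axes, and one has to keep the forward and backward rules mutually consistent across them. The converse inclusion---that every tuple listed in the proposition satisfies the three equations at every adjacent pair---is an immediate substitution and requires no further work.
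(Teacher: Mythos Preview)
Your argument is correct and is precisely the ``straightforward induction'' the paper alludes to but does not write out: you analyze the multilinearized relations as transition rules between consecutive factors and then concatenate. One small wobble: to force $q_1,\ldots,q_{m-2}=e_1$ you should apply the predecessor rule to $q_2,\ldots,q_{m-1}$ (each of which lies in $\{e_1\}\cup(L_3\setminus\{e_1,e_3\})$), not to $q_1,\ldots,q_{m-2}$ themselves; the conclusion is unaffected.
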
 

\begin{proof}
The closed points of $\U_d(A)$ are the closed points of $(\P^2)^{\times d}$ that satisfy the equations: $$x_i y_{i+1} - y_i x_{i+1} = 0, \quad y_i z_{i+1} - z_i y_{i+1} = 0, \quad z_i x_{i+1} = 0,$$ 
for all  $0 \leq i \leq d-1.$ The result follows by a straightforward induction.

\end{proof}

Define the following basic affine open subsets of $\U_d(A)$:
$$U_{i} = D_+(m_{i,0,d-i}) \text{ for } 0 \leq i \leq d, \text{ and } U_{d+1} = D_+(m_{0,d,0}).$$

\begin{lemma}\label{type (2) Ore open cover}
The collection $\{U_i\}_{0\leq i \leq d+1}$ is an open cover of $\U_d(A)$. 
Moreover, $U_i \cap U_j$ is nonempty if and only if $j \equiv i-1 \text{ or } j\equiv i+1\pmod{d+2}$.
\end{lemma}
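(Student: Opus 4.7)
The plan is to reduce both claims to statements about closed points of $\U_d(A)$. As a closed subscheme of $(\P^2)^{\times d}$, the scheme $\U_d(A)$ is of finite type over $\k$, hence Jacobson, so every nonempty closed subset contains a closed point. Therefore, if every closed point lies in some $U_i$, then the closed complement of $\bigcup_i U_i$ has no closed points and must be empty; and each open $U_i\cap U_j$ is empty if and only if it contains no closed point.

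For the cover, I invoke Proposition \ref{type (2) Ore point scheme} and compute the values of $m_{j,0,d-j}$ and $m_{0,d,0}$ at each closed point, viewing $m_{i,j,k}$ as the global section of $\mcO_{(\P^2)^{\times d}}(1,\ldots,1)$ whose value at $([a_0\!:\!b_0\!:\!c_0],\ldots,[a_{d-1}\!:\!b_{d-1}\!:\!c_{d-1}])$ is the ordered product $a_0\cdots a_{i-1}\,b_i\cdots b_{i+j-1}\,c_{i+j}\cdots c_{d-1}$. At a point $[x\!:\!y\!:\!0]^d$, among the generators in question only $m_{d,0,0}$ and $m_{0,d,0}$ are potentially nonzero (giving $x^d$ and $y^d$); since $[x:y]\in\P^1$, the point lies in $U_d\cup U_{d+1}$. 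Symmetrically, $[0\!:\!y\!:\!z]^d$ lies in $U_0\cup U_{d+1}$. For a point of the form $e_1^i\times[x\!:\!0\!:\!z]\times e_3^{d-i-1}$, a short case analysis of the ordered product shows $m_{k,0,d-k}$ vanishes unless $k\in\{i,i+1\}$, evaluating to $z$ when $k=i$ and to $x$ when $k=i+1$; meanwhile $m_{0,d,0}$ vanishes since every $y$-coordinate is zero. Since $[x:z]\in\P^1$, the point lies in $U_i\cup U_{i+1}$.

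The ``if'' direction of the second claim is immediate by exhibiting witnesses: $[0\!:\!1\!:\!1]^d\in U_0\cap U_{d+1}$, $[1\!:\!1\!:\!0]^d\in U_d\cap U_{d+1}$, and $e_1^i\times[1\!:\!0\!:\!1]\times e_3^{d-i-1}\in U_i\cap U_{i+1}$ for $0\le i\le d-1$. For the converse, the calculations above show that at any closed point the set of indices $k$ with $m_{k,0,d-k}\neq 0$ is one of $\{d\}$, $\{0\}$, or $\{i,i+1\}$ for some $0\le i\le d-1$. Hence if $i\neq j$ with $0\le i,j\le d$ and $U_i\cap U_j$ contains a closed point, that point must lie in the third family and $|i-j|=1$. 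Finally, $U_j\cap U_{d+1}$ is empty for $1\le j\le d-1$, because $m_{0,d,0}$ vanishes on every third-family point, while $m_{j,0,d-j}$ vanishes on every first- or second-family point for such $j$.

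The main obstacle is simply the bookkeeping across the three families of closed points from Proposition \ref{type (2) Ore point scheme} and the parameter of the third family. No step is conceptually deep once one commits to the canonical ordered-product representative of $m_{i,j,k}$ as a section of $\mcO_{(\P^2)^{\times d}}(1,\ldots,1)$.
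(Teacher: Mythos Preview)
Your argument is correct. For the cover, you and the paper proceed identically: the scheme is of finite type over $\k$, so it suffices to cover the closed points, and these are supplied by Proposition \ref{type (2) Ore point scheme}. One small wording issue: when you say ``the set of indices $k$ with $m_{k,0,d-k}\neq 0$ is one of $\{d\}$, $\{0\}$, or $\{i,i+1\}$,'' you should say ``is contained in one of,'' since at $[0{:}1{:}0]^d$ the set is empty and at a third-family point with $x=0$ or $z=0$ it is a singleton. This does not affect your deduction, which only uses the containment.

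Where you genuinely diverge from the paper is in the ``only if'' direction of the intersection statement. The paper argues algebraically: it invokes $D_+(f)\cap D_+(g)=D_+(fg)$ and then Lemma \ref{zero products}, whose items (2)--(5) show directly that the relevant products $m_{i,0,d-i}\,m_{j,0,d-j}$ (for $|i-j|\ge 2$) and $m_{i,0,d-i}\,m_{0,d,0}$ (for $1\le i\le d-1$) vanish in ${\mathbf U}_d(A)$, so $D_+$ of the product is empty. You instead argue geometrically, checking on closed points that no point can lie in two non-adjacent $U_i$. Your route is self-contained and avoids citing Lemma \ref{zero products}, at the cost of the case analysis across the three families; the paper's route is terser but depends on the Gr\"obner-basis computations packaged in that lemma. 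For the ``if'' direction you are more explicit than the paper, exhibiting witness points in each adjacent intersection.
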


\begin{proof}
The scheme $\U_d(A)$ is quasi-compact, so, for the first statement, it suffices to check that every closed point is in one of the $U_i$. This is easy to see based on Proposition \ref{type (2) Ore point scheme}. For the second statement recall that $D_+(f) \cap D_+(g) = D_+(fg)$ and apply Lemma \ref{zero products}.

\end{proof}

 We now prove that $\U_d(A)$ is reduced and determine its singular points.

\begin{thm}\label{Ore case (2) is reduced}
Let $X = \U_d(A)$ for some $d\ge 2$. Then the following hold.
\begin{itemize}
\item[(1)] For all $0 \leq i \leq d+1$, $\mc{O}_X(U_i) \cong \k[s,t]/(st)$.
\item[(2)] $X$ is a reduced scheme.
\item[(3)] Let $p_i = e_1^{i} \times e_3^{d-i}$ for $0 \leq i \leq d$, and let $p_{d+1} = e_2^d$ in $(\P^2)^{\times d}$. The set of singular points of $X$ is ${\rm Sing}(X) = \{p_i : 0 \leq i \leq d+1\},$ and each singular point is a nodal singularity.
\end{itemize}
\end{thm}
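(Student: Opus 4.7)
The plan is to establish (1) by direct computation in affine charts on $(\P^2)^{\times d}$; parts (2) and (3) will then follow quickly.

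For (1), fix $0 \le i \le d$ and consider the affine open $W_i \subset (\P^2)^{\times d}$ given by the product of the charts $D_+(x) \subset \P^2$ in positions $0,\ldots, i-1$ and the charts $D_+(z) \subset \P^2$ in positions $i,\ldots, d-1$. Dehomogenizing by setting $x_k = 1$ for $k < i$ and $z_k = 1$ for $k \ge i$, the coordinate ring of $W_i$ is a polynomial ring in the $2d$ variables $\{y_k, z_k : k < i\}\cup\{x_k, y_k : k \ge i\}$. Under the isomorphism $\U_d(A) \cong \Proj {\mathbf U}_d(A)$ of Proposition \ref{model for Gd}, $U_i = X \cap W_i$ is cut out inside $W_i$ by the dehomogenized multilinearized relations
$$x_j y_{j+1} - y_j x_{j+1},\qquad y_j z_{j+1} - z_j y_{j+1},\qquad z_j x_{j+1}, \qquad 0 \le j \le d-2.$$
I would then reduce these equations by sweeping from the left: for $0 \le j \le i-2$ the relation $z_j x_{j+1}=0$ forces $z_j = 0$ and the $xy$-relation forces $y_{j+1} = y_j$; symmetrically, for $i \le j \le d-2$ the $zx$-relation forces $x_{j+1}=0$ and the $yz$-relation forces $y_{j+1}=y_j$. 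The two ``straddling'' relations at $j = i-1$ combine with the $zx$-relation $z_{i-1}x_i=0$ to force $y_j = 0$ for all $j$, leaving exactly two free variables $z_{i-1}, x_i$ with the single relation $z_{i-1}x_i = 0$. Hence $\mc{O}_X(U_i) \cong \k[z_{i-1}, x_i]/(z_{i-1}x_i) \cong \k[s,t]/(st)$. The endpoint cases $i=0$ (take $s=x_0$, $t=y_0$), $i=d$ (take $s=y_0$, $t=z_{d-1}$), and $i=d+1$ (use the chart where all $y_k = 1$; the relations then collapse to $x_k=x$ and $z_k=z$ with single relation $xz=0$) are essentially identical, with minor adjustments.

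Part (2) is then immediate: by Lemma \ref{type (2) Ore open cover} the $U_i$ cover $X$, and each $\mc{O}_X(U_i) \cong \k[s,t]/(st)$ is reduced, so $X$ is reduced.

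For (3), the affine scheme $\Spec \k[s,t]/(st)$ has a unique singular point at the origin, namely a nodal singularity. Tracing through the identifications above, in each affine chart the origin of $\k[s,t]/(st)$ corresponds to the unique point in $U_i$ with $y_k = 0$ for all $k$ and with the two ``branching'' coordinates $z_{i-1}, x_i$ (or the appropriate endpoint analogues) both zero; this is precisely the point $p_i$ listed in the statement. Since the $p_i$ are pairwise distinct and $p_i \in U_i$ by construction, we conclude ${\rm Sing}(X) = \{p_0, \ldots, p_{d+1}\}$ with each $p_i$ a node. The main obstacle is simply careful bookkeeping in the interior cases $1 \le i \le d-1$, where both ``sweeping'' directions must be executed and combined; everything else is mechanical.
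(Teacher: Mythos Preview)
Your argument is correct. The route differs from the paper's: the paper works intrinsically in $\Proj {\mathbf U}_d(A)$, using Lemma \ref{zero products} to kill most of the generators $m_{i',j',k'}/m_{i,0,d-i}$ in the localization $({\mathbf U}_d(A))_{(m_{i,0,d-i})}$ and then invoking a Gr\"obner basis computation to conclude that only the relation $st=0$ remains. You instead work extrinsically in the ambient product chart $W_i\subset (\P^2)^{\times d}$, dehomogenize, and reduce the multilinearized relations by hand. Your approach is more elementary and self-contained---it does not rely on the vanishing results of Lemma \ref{zero products} or on an unspecified Gr\"obner computation---at the cost of requiring a careful (but routine) verification that the dehomogenized ideal equals $(y_0,\ldots,y_{d-1}, z_0,\ldots,z_{i-2}, x_{i+1},\ldots,x_{d-1}, z_{i-1}x_i)$ in the interior case. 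The paper's approach, by contrast, stays inside the framework of the ${\mathbf U}_d$ functor and reuses Lemma \ref{zero products}, which is needed anyway for later results in the example. Parts (2) and (3) are handled identically in both.
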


\begin{proof}


Statement (1) follows from Lemma \ref{zero products} and a Gr\"obner basis for the defining ideal of ${\mathbf U}_d(A)$. For example, for $i = 0$, using Lemma \ref{zero products} we have $$\mc{O}_X(U_0) = \k\Big[\frac{m_{0,d,0}}{m_{0,0,d}}, \frac{m_{1,0,d-1}}{m_{0,0,d}}\Big].$$ Moreover, $m_{0,d,0}m_{1,0,d-1} = 0$. Then one uses the Gr\"obner basis to conclude that $$\mc{O}_X(U_0) \cong \k[s, t]/(st).$$ 

For statement (2), the ring $\k[s, t]/(st)$ is reduced since it is a quotient of a polynomial ring by a square-free monomial ideal. Then $X$ is a reduced scheme since the $U_i$ form an open affine cover of $X$.

Let $p \in X$ be any point. It suffices to consider regularity of $p$ on the affine open cover $\{U_i\}_{0 \leq i \leq d+1}$. Suppose that $p \in U_i$. From statement (1), $U_i \cong \Spec \k[s,t]/(st)$. It is then clear that $p$ is regular unless $p = p_i$. The statement that each $p_i$ is nodal is also obvious.

\end{proof}
%
%

Now we determine the irreducible components of the scheme $\U_d(A)$. We will use the notation established in Section \ref{Hilbert series}. The following result can easily be checked using a Gr\"obner basis. We omit the proof.

\begin{lemma}\label{presentation of A/<y>}
For any $d \geq 2$, there is an isomorphism of graded algebras $${\mathbf U}_d(A/\la y \ra) \cong \frac{\k[m_{i, d-i} : 0 \leq i \leq d]}{(m_{i,d-i} m_{j,d-j} : j >i+1)}.$$
\end{lemma}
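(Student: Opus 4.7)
The approach is to reduce the problem to a monomial calculation via Proposition \ref{alt pres}. Set $V = (A/\la y\ra)_1 = \k x \oplus \k z$ and $S = \Sym(V)^{\tsr d} \cong \k[x_0, z_0, \ldots, x_{d-1}, z_{d-1}]$. Since $A/\la y\ra \cong \k\la x, z\ra/\la zx\ra$ has only degree-$2$ relations, the multilinearization ideal in the sense of Definition \ref{multilinearization ideal} is the \emph{monomial} ideal $K = (z_k x_{k+1} : 0 \le k \le d-2) \subset S$. Proposition \ref{alt pres} then identifies ${\mathbf U}_d(A/\la y\ra)$ with $S' = \bigoplus_{n \ge 0} (S/K)_{(n, \ldots, n)}$. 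Because $K$ is monomial, $S/K$ has the explicit $\k$-basis consisting of monomials not divisible by any $z_k x_{k+1}$, and the rest of the proof proceeds by normal-form bookkeeping.

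First I would identify the degree-$1$ generators of $S'$: among multilinear monomials in $S$, the ones surviving in $S/K$ are exactly $x_0 \cdots x_{i-1} z_i \cdots z_{d-1} = m_{i, d-i}$ for $0 \le i \le d$. Next I would verify the claimed relations by computing $m_{i,d-i} m_{j,d-j}$ position-by-position in $S$: for $i < j$, the product at position $k$ is $x_k^2$ for $k < i$, $z_k x_k$ for $i \le k \le j-1$, and $z_k^2$ for $k \ge j$. This product is divisible by $z_i x_{i+1} \in K$ precisely when $j \ge i+2$, so the relations $m_{i,d-i} m_{j,d-j} = 0$ for $j > i+1$ hold in $S'$. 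This yields a surjective graded algebra morphism $\Phi: B \to {\mathbf U}_d(A/\la y\ra)$, where $B$ denotes the right-hand side of the claimed isomorphism.

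For injectivity, the relations in $B$ force a monomial $\prod m_{i,d-i}^{a_i}$ to vanish unless its support $\{i : a_i > 0\}$ is either a singleton $\{i_0\}$ or a pair of consecutive indices $\{i_0, i_0+1\}$. Such \emph{connected} monomials therefore span each graded component of $B$. I would then compute their images in $S$: the singleton case $a_{i_0} = n$ yields $x_0^n \cdots x_{i_0-1}^n z_{i_0}^n \cdots z_{d-1}^n$, while the consecutive-pair case with exponents $(a, n-a)$, $1 \le a \le n-1$, yields $x_0^n \cdots x_{i_0-1}^n z_{i_0}^a x_{i_0}^{n-a} z_{i_0+1}^n \cdots z_{d-1}^n$. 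These are pairwise distinct monomials in $S$, and a quick check shows none is divisible by any $z_k x_{k+1}$. Hence their images in $S/K$ are $\k$-linearly independent, forcing $\Phi$ to be injective.

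The principal obstacle is simply the bookkeeping in paragraph two: carefully tracking which position contributes $x_k$ versus $z_k$ in each product so as to locate the forbidden factor $z_k x_{k+1}$ and confirm that the only obstruction arises exactly when $|i - j| \ge 2$. Once the problem is transferred to the monomial ring $S/K$ via Proposition \ref{alt pres}, everything else is routine normal-form arithmetic.
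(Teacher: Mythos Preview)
Your argument is correct and is precisely the Gr\"obner-basis verification the paper alludes to but omits: you invoke Proposition~\ref{alt pres} to pass to $S/K$ with $K$ the monomial ideal $(z_k x_{k+1})$, identify the surviving multilinear monomials as the $m_{i,d-i}$, and verify injectivity by checking that the connected monomials in $B$ map to distinct standard monomials in $S/K$. This is exactly the intended approach.
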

~\\

Using this result, for each $0 \leq i \leq d-1$, define a algebra map $\psi_i: {\mathbf U}_d(A/\la y \ra) \to \k[s,t]$ by: $$m_{i, d-i} \mapsto s, \quad m_{i+1, d-i-1} \mapsto t, \quad m_{i', j'} \mapsto 0 \text{ otherwise.}$$ Let $\pi_y : A \to A/\la y \ra$ be the quotient map and define $\varphi_i: {\mathbf U}_d(A) \to \k[s,t]$ as the composite ${\mathbf U}_d(\pi_y) \circ \psi_i$. Let $\pi_z: A \to A/\la z \ra$ and $\pi_x: A \to A/\la x \ra$ be quotient maps and define $\varphi_{d-1} = {\mathbf U}_d(\pi_z)$ and $\varphi_d = {\mathbf U}_d(\pi_x)$. Observe that $A/\la z \ra \cong \k[x,y]$ and $A/\la x \ra \cong \k[y,z]$, hence, by Theorem \ref{functor on commutative} we have isomorphisms $${\mathbf U}_d(A/\la z \ra) \cong \k[x,y]_{(d)} \quad \text { and } \quad {\mathbf U}_d(A/\la x \ra) \cong \k[y,z]_{(d)}.$$

Noting that the $\varphi_i$ are surjective, we define the corresponding closed immersions $f_i: W_i' \to \U_d(A)$, where $$W_i' = \begin{cases}\Proj \k[s,t] \quad &0 \leq i \leq d-1, \\ \Proj \k[x,y]_{(d)} \quad &i = d-1, \\
\Proj \k[y,z]_{(d)} \quad &i = d.\end{cases}$$

For $0 \leq i \leq d+1$, let $W_i$ be the scheme-theoretic image of $f_i$. Note that, as schemes, $W'_i \cong \P^1_{\k}$ for all $i$. In particular, $W'_i$ is normal and irreducible. It's clear from the description of the closed points of $\U_d(A)$ in Lemma \ref{type (2) Ore point scheme} and the definition of the $f_i$ that the set of all closed points of $\U_d(A)$ is contained in $\cup_{i} W_i$. Since $\U_d(A)$ is of finite type over $\k$, the set of closed points is dense, so $\U_d(A) = \cup_{i} W_i$. Obviously there are no (nontrivial) containments between these irreducible closed subspaces, so we have proved the following.

\begin{lemma} The irreducible components of $\U_d(A)$ are the $W_i$, $0 \leq i \leq d+1$.
\end{lemma}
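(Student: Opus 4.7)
The plan is to verify three things: each $W_i$ is irreducible, the union $\bigcup_i W_i$ equals $\U_d(A)$, and no $W_i$ is contained in another $W_j$. Together these imply that the $W_i$ are exactly the irreducible components of $\U_d(A)$.

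First, since each $W'_i \cong \P^1_{\k}$ is irreducible, its scheme-theoretic image $W_i$ under the closed immersion $f_i$ is an irreducible closed subscheme of $\U_d(A)$. This is the easy part. For the union step, I would trace through each family of closed points listed in Proposition~\ref{type (2) Ore point scheme} and match it to the image of one of the $f_i$. Specifically, for $0 \le i \le d-1$, unraveling the definition of $\varphi_i = {\mathbf U}_d(\pi_y)\circ \psi_i$ shows that the closed point $[a:b] \in \Proj \k[s,t]$ is sent to the point with $m_{i,d-i} = a$ and $m_{i+1,d-i-1} = b$ (all other generators zero), which corresponds via the Segre-type embedding to $e_1^i \times [b:0:a] \times e_3^{d-i-1}$ in $(\P^2)^{\times d}$. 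Thus $W_i$ contains precisely the closed points of the third family in Proposition~\ref{type (2) Ore point scheme} with index $i$. For $W_d$, coming from ${\mathbf U}_d(\pi_z)$ with $A/\la z\ra \cong \k[x,y]$, Theorem~\ref{functor on commutative} identifies the image with the closed points $[x:y:0]^d$; similarly $W_{d+1}$ picks out $[0:y:z]^d$. Together these account for every closed point of $\U_d(A)$.

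Since $\U_d(A)$ is of finite type over $\k$, its closed points are dense, so $\bigcup_i W_i$ is a closed subset containing a dense subset, whence $\U_d(A) = \bigcup_i W_i$. Finally, to rule out containments, for each $i$ I would exhibit a closed point in $W_i$ not lying in any other $W_j$: for $0 \le i \le d-1$, the point $e_1^i \times [1:0:1] \times e_3^{d-i-1}$ lies in $W_i$ but in no other component (its $(i{+}1)$-th factor is $[1:0:1] \ne e_1, e_3$, and its remaining factors are not of the form $[\ast:y:0]$ or $[0:y:\ast]$ for any nonzero $y$); for $W_d$ take $[1:1:0]^d$, and for $W_{d+1}$ take $[0:1:1]^d$.

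The argument is essentially bookkeeping once the closed points of $\U_d(A)$ and the images of the $f_i$ are both understood, so the only mild obstacle is the careful matching in the second paragraph—verifying that the set-theoretic image of $f_i$ on closed points really is the claimed family in $(\P^2)^{\times d}$. I expect this to go through by direct computation using the definitions of $\psi_i$ and $\varphi_i$, together with the description of the multilinearization map $\widetilde{\i}$ given in Section~\ref{Td}.
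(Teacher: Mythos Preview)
Your proposal is correct and follows essentially the same approach as the paper: show each $W_i$ is irreducible, verify that the closed points of $\U_d(A)$ from Proposition~\ref{type (2) Ore point scheme} are covered by the $W_i$, use density of closed points (finite type over $\k$), and rule out containments. The paper dispatches the covering and non-containment steps with ``it's clear'' and ``obviously,'' whereas you spell out the matching of closed points to components and exhibit witnesses for non-containment explicitly; this is a difference in level of detail, not in strategy.
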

Observe that we have proved that the scheme $X = \U_d(A)$ satisfies Assumptions \ref{ass}. Recall that $\v:X' \to X$ is the normalization of $X$, where $(X',\v)=\left(\coprod W'_i, \coprod f_i\right)$ and the structure maps $\psi_i:W'_i\to X'$ satisfy $\v\circ \psi = f_i$. Since $\v = \coprod f_i$, we have $\v_*\O_{X'} = \bigoplus (f_i)_*\O_{W'_i}$. Also recall the short exact sequence
$$0\to \O_X\xto{\v^{\#}} \v_*\O_{X'} \xto{\rho} \mc{Q}\to 0,$$
where $\mc{Q}=\v_*\O_{X'}/\O_X$. The sheaf $\mc{Q}$ is a direct sum of skyscraper sheaves supported on ${\rm Sing}(X)$.

\begin{lemma}\label{stalks of skyscraper}
Let $X = \U_d(A)$. Then:
\begin{itemize}
\item[(1)] $\dim X = 1$;
\item[(2)] $\v=\coprod f_i$ separates singularities;
\item[(3)] $\dim_{\k} \mc{Q}_p=1$ for all $p\in {\rm Sing}(X)$.
\end{itemize}
\end{lemma}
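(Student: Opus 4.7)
The plan is to establish the three statements in sequence, leveraging the structural results for $X = \U_d(A)$ developed above.

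For (1), each irreducible component $W_i$ is the scheme-theoretic image of a closed immersion from $W'_i$, and in every case $W'_i$ is either $\Proj \k[s,t]$, $\Proj \k[x,y]_{(d)}$, or $\Proj \k[y,z]_{(d)}$, each of which is isomorphic to $\P^1_\k$. Thus every component has dimension $1$, and since $X$ is a finite union of these components, $\dim X = 1$.

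For (2), I would invoke the remark following Definition \ref{separable singularities}: to check that $\nu$ separates singularities, it suffices to verify that $|W_i \cap {\rm Sing}(X)| \le 2$ for every component. The incidences of singular points and components can be read off from the explicit descriptions of closed points on each $W_i$ (derived from Proposition \ref{type (2) Ore point scheme} and the definitions of the $\varphi_i$) together with the enumeration of singular points ${\rm Sing}(X) = \{p_0, \dots, p_{d+1}\}$ from Theorem \ref{Ore case (2) is reduced}. Direct inspection shows each $W_i$ meets ${\rm Sing}(X)$ in exactly two points: the ``middle'' component with closed points $e_1^i \times [x:0:z] \times e_3^{d-i-1}$ contains $\{p_i, p_{i+1}\}$, the component consisting of points $[x:y:0]^d$ contains $\{p_d, p_{d+1}\}$, and the component consisting of points $[0:y:z]^d$ contains $\{p_0, p_{d+1}\}$. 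Hence the hypothesis of the cited remark holds.

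For (3), each $p_i$ is a nodal singularity by Theorem \ref{Ore case (2) is reduced}(3), and by the bookkeeping in (2) exactly two components $W_{j_1}, W_{j_2}$ pass through $p_i$. Stalk-locally at $p_i$, the map $\O_{X, p_i} \to (\nu_*\O_{X'})_{p_i}$ identifies with the inclusion of the nodal local ring into the product $\O_{W'_{j_1}, p'_{j_1}} \times \O_{W'_{j_2}, p'_{j_2}}$ of the two DVRs associated to the branches. The image consists of those pairs $(f, g)$ with $f(p'_{j_1}) = g(p'_{j_2})$, so the cokernel $\mc{Q}_{p_i}$ is isomorphic to $\k$ via $(f, g) \mapsto f(p'_{j_1}) - g(p'_{j_2})$, yielding $\dim_\k \mc{Q}_{p_i} = 1$.

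The main obstacle is the case analysis in part (2); the careful description of which singular points lie on which component is the most intricate piece, though no deeper tool than comparing explicit parametrizations of the $W_i$ to the list of $p_j$ is required. The computations in (1) and (3) are essentially immediate once the underlying geometry---each component a $\P^1$, each singularity a node through exactly two components---is in hand.
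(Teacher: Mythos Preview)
Your proposal is correct and follows essentially the same approach as the paper's proof: verify $|W_i \cap {\rm Sing}(X)| = 2$ for each component, then invoke the remark after Definition \ref{separable singularities} for (2) and the nodal structure for (3). The paper's own proof is extremely terse---it simply asserts that (2) and (3) follow from $|W_i \cap {\rm Sing}(X)| = 2$ and $\dim X = 1$---whereas you explicitly enumerate the incidences $W_i \cap {\rm Sing}(X)$ and spell out the local computation of $\mc{Q}_p$ at a node; this added detail is correct and makes the argument self-contained.
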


\begin{proof}
Statement (1) is clear since $\dim W_i = \dim \P^1_{\k} = 1$ for all $i$. 

Recall that the set of singular points was determined in Theorem \ref{Ore case (2) is reduced}. One then checks that $|W_i \cap {\rm Sing}(X)| = 2$ for all $i$. Statements (2) and (3) follow from this fact and statement (1).

\end{proof}

Now we can compute the Hilbert series of the ring $\mbfB(A)$.

\begin{thm}\label{Hilbert series of B(A), Ore case 2}
Let $A = \k\la x, y, z \ra/\la xy-yx, yz-zy, zx \ra$. Then the Hilbert series of $\mbfB(A)$ is given by $$H_{\mbfB(A)}(t) = 1 + \sum_{d \geq 1} 3d \, t^d.$$
\end{thm}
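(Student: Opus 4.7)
The plan is to invoke Corollary \ref{key result} with $X = \U_d(A) \cong \Proj {\mathbf U}_d(A)$ for each $d \geq 2$, thereby computing $\dim_{\k} \mbfB(A)_d = h^0_X(\mcO_X(1))$ directly from the $d+2$ irreducible components $W_i'$ and the set of $d+2$ nodal singular points already identified in Theorem \ref{Ore case (2) is reduced} and Lemma \ref{stalks of skyscraper}. All the geometric hypotheses of the corollary have been established in the preceding sequence of lemmas, so the remaining work is the combinatorial count $\sum_{i} h^0_{W_i'}(\mcF_i)$, where $\mcF_i = f_i^*(\mcO_X(1))$.

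The main step will be to identify the pullback line bundle $\mcF_i$ on each of the two types of components. For $0 \leq i \leq d-1$, the map $\varphi_i : {\mathbf U}_d(A) \to \k[s,t]$ factors through the quotient by $\la y \ra$ and sends only two of the degree-one generators of ${\mathbf U}_d(A)$ to the standard generators of $\k[s,t]$; thus $W_i' \cong \P^1$ is embedded linearly, $\mcF_i \cong \mcO_{\P^1}(1)$, and $h^0_{W_i'}(\mcF_i) = 2$. For $i \in \{d, d+1\}$, Theorem \ref{functor on commutative} identifies $W_i'$ with $\Proj$ of the $d$-th Veronese subalgebra of a polynomial ring in two variables; hence $W_i' \cong \P^1$, but $\mcF_i \cong \mcO_{\P^1}(d)$, and $h^0_{W_i'}(\mcF_i) = d+1$.

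Assembling these counts with $|{\rm Sing}(X)| = d+2$ will give
$$\dim_{\k} \mbfB(A)_d \; = \; d \cdot 2 \; + \; 2(d+1) \; - \; (d+2) \; = \; 3d$$
for all $d \geq 2$. The two remaining degrees are immediate: $\mbfB(A)_0 = \k$ by convention, and $\dim_{\k} \mbfB(A)_1 = 3$ because $(\tau_A)_1$ is an isomorphism (Proposition \ref{map A to B}) onto a space of dimension $\dim_{\k} A_1 = 3$. Together these will assemble to the stated Hilbert series. The principal obstacle will be careful bookkeeping of the pullback line bundle on each normalized component---particularly the distinction between the linear $\P^1$'s and the $d$-uple Veronese $\P^1$'s---but Theorem \ref{functor on commutative} handles the Veronese identification cleanly, so I do not anticipate any serious technical difficulty beyond this accounting.
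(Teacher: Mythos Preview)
Your proposal is correct and follows essentially the same route as the paper's proof: invoke Corollary \ref{key result} after the preceding lemmas have verified its hypotheses, compute $h^0$ on each normalized component (distinguishing the $d$ linear $\P^1$'s from the two $d$-uple Veronese $\P^1$'s), subtract $|{\rm Sing}(X)|=d+2$, and obtain $3d$. You are slightly more explicit than the paper in identifying the pullback line bundles $\mcF_i$ and in handling the degrees $0$ and $1$ separately, but the argument is the same.
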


\begin{proof}
We've checked that all of the hypotheses of Corollary \ref{key result} hold for the scheme $\U_d(A)$, $d \geq 2$. It is well known  that the dimension of the space of global sections of $\mc{O}_{W'_i}(1)$ is given by $$h^0(\mc{O}_{W'_i}(1)) = \begin{cases} 2 \quad &0 \leq i \leq d-1 \\ d+1 \quad & i = d-1, \, d. \end{cases}$$ By Theorem \ref{Ore case (2) is reduced}, there are $d+2$ singular points. Hence by Corollary \ref{key result}, we have $\dim_{\k} \mbfB(A)_d = 2d+2(d+1) - (d+2) = 3d.$

\end{proof}


%
%

Recall that in Section \ref{structure} we defined $$J_d = \{f \in A_d : Pf = 0 \text{ for every truncated right point module $P$ of length $d+1$}\}.$$ 
Since the scheme $\U_d(A)$ is reduced, Corollary \ref{kernel of tau} and its proof shows that $H^0_{\mf m}({\mathbf U}_d(A))_1 = \widetilde \iota(J_d),$ where ${\mf m} = {\mathbf U}_d(A)_{+}$.

\begin{prop}\label{local cohomology in degree 1}
We have $$H^0_{\mf m}({\mathbf U}_d(A))_1 = \begin{cases} 0 \quad &d = 2 \\
\bigoplus_{i, j, k \geq 1} \k \, m_{ijk} \quad &d \geq 3.\end{cases}$$
\end{prop}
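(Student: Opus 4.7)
I break the proof into the two containments. The ``$\supseteq$'' direction is immediate from Lemma \ref{zero products}(1): whenever $ijk \geq 1$, $\mathfrak{m}$ annihilates $m_{ijk}$, so $m_{ijk} \in H^0_{\mathfrak{m}}$; for $d = 2$ this direction is vacuous. The content is the ``$\subseteq$'' direction, which for $d = 2$ reads $H^0_{\mathfrak m}(\mathbf{U}_2(A))_1 = 0$, and in general says that any $v = \sum c_{ijk} m_{ijk} \in H^0_{\mathfrak m}({\mathbf U}_d(A))_1$ must satisfy $c_{ijk} = 0$ whenever $ijk = 0$.

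The approach is to detect the boundary coefficients of $v$ by pushing it along the surjective graded algebra maps $\varphi_0, \ldots, \varphi_{d+1}$ introduced just before Lemma \ref{stalks of skyscraper}. For $0 \le i \le d-1$ the map $\varphi_i = \psi_i \circ \mathbf{U}_d(\pi_y)$ lands in $\k[s,t]$, while $\mathbf{U}_d(\pi_z)$ and $\mathbf{U}_d(\pi_x)$ land respectively in $\k[x,y]_{(d)}$ and $\k[y,z]_{(d)}$ by Theorem \ref{functor on commutative}. Each target is a graded domain, so its irrelevant ideal $\mathfrak m'$ contains a nonzerodivisor and $H^0_{\mathfrak m'}$ vanishes. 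Surjectivity and degree preservation carry $\mathfrak m$ onto $\mathfrak m'$, so $v \in H^0_{\mathfrak m}$ forces each pushforward of $v$ to be zero.

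Reading off coefficients is then direct linear algebra. The map $\mathbf{U}_d(\pi_z)$ kills every $m_{ijk}$ with $k > 0$ and sends the family $\{m_{i,d-i,0}\}_{0 \le i \le d}$ to a $\k$-linearly independent set in $\k[x,y]_{(d)}$, so $\mathbf{U}_d(\pi_z)(v) = 0$ gives $c_{i,d-i,0} = 0$ for all $i$. The analogous argument using $\mathbf{U}_d(\pi_x)$ gives $c_{0,j,d-j} = 0$ for all $j$. The remaining ``middle-edge'' coefficients $c_{i,0,d-i}$ with $0 \leq i \leq d$ are handled by the $\varphi_i$, $0 \leq i \leq d-1$: each such $\varphi_i$ kills every basis element except $m_{i,0,d-i} \mapsto s$ and $m_{i+1,0,d-i-1} \mapsto t$, so vanishing yields $c_{i,0,d-i} = c_{i+1,0,d-i-1} = 0$, and letting $i$ range over $\{0, \ldots, d-1\}$ exhausts the middle-edge coefficients.

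The main obstacle is organizational rather than technical: one must match each map with the boundary edge of $\U_d(A)$ it detects and verify precisely which $m_{abc}$ survive each quotient. The ring-theoretic content is light, reducing to Lemma \ref{zero products}(1) and the observation that $H^0_{\mathfrak m'}(R) = 0$ for any graded domain $R$, since every nonzero homogeneous element of positive degree is a nonzerodivisor in such $R$.
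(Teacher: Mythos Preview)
Your proof is correct and takes a genuinely different route from the paper's argument.

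The paper first invokes the reducedness of $\U_d(A)$ (Theorem \ref{Ore case (2) is reduced}) and Corollary \ref{kernel of tau} to identify $H^0_{\mathfrak m}(\mathbf{U}_d(A))_1$ with $\widetilde{\iota}(J_d)$, where $J_d$ is the space of degree-$d$ elements annihilating every truncated point module. It then evaluates a candidate $f=\sum \lambda_{i,j,k}x^iy^jz^k$ on the explicit point modules of Proposition \ref{type (2) Ore point scheme} to force the boundary coefficients to vanish. Your argument bypasses both the reducedness theorem and the point-module interpretation entirely: you push $v$ along the surjective graded maps $\mathbf{U}_d(\pi_z)$, $\mathbf{U}_d(\pi_x)$, and $\varphi_0,\dots,\varphi_{d-1}$ to graded domains, where $H^0_{\mathfrak m'}$ vanishes for trivial reasons, and read off the coefficients directly. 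This is more elementary and self-contained; in particular it handles the ``middle-edge'' coefficients $c_{i,0,d-i}$ (with $i,d-i\ge 1$) cleanly, whereas the paper's written argument only explicitly treats types (1) and (2) and leaves the reader to supply the type-(3) step. The trade-off is that the paper's approach illustrates the general machinery of Section \ref{structure} in action, while yours is a direct algebraic computation that does not connect to the geometric picture.
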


\begin{proof}
Let $P$ be a truncated point module of length $d+1$. Recall, by Proposition \ref{type (2) Ore point scheme}, that $P$ corresponds to a unique closed point of $(\P^2)^{\times d}$ of the form:
\begin{align*}
& (1) \quad [x:y:0]^{d} \text{ for all } [x:y] \in \P^1; \\
& (2) \quad [0:y:z]^{d} \text { for all } [y:z] \in \P^1; \\
& (3) \quad e_1^{i} \times [x: 0: z] \times e_3^{d-i-1} \text{ for all } [x:z] \in \P^1 \text{ and } 0 \leq i \leq d-1.
\end{align*}
We will say that a truncated point module $P$ corresponding to a point from (1), (2), (3) is of type (1), (2), (3), respectively. 

First consider the case $d = 2$. Let $f \in J_2$ and write $f = \sum \l_{i,j,k} \, x^i y^j z^k$ for some $\l_{i,j,k} \in \k$. Since $f$ annihilates any $P$ of type (1) and $\k$ is infinite, $\l_{i,j,k} = 0$ for $k = 0$; then, since $f$ annihilates any $P$ of type (2), $\l_{i, j, k} = $ for $i = 0$. We conclude that $f = 0$, and the result follows.

Now suppose that $d \geq 3$. Note that the sum in the statement is direct because $\widetilde \iota$ is a $\k$-linear isomorphism. In Lemma \ref{zero products}(1) we showed that $\bigoplus_{i, j, k \geq 1} \k \, m_{ijk} \subseteq H^0_{\mf m}({\mathbf U}_d(A))_1$. To prove the reverse containment it suffices to show: $$f \in J_d, \quad f = \sum_{ijk = 0} \l_{i,j,k} \, x^iy^jz^k \quad \implies \quad f = 0.$$
This can be argued identically as in the previous paragraph.
 
\end{proof}

We use the facts above to prove the desired results about ${\mbfB}(A)$.

%

\begin{thm}\label{tau is surjective, Ore case 2}
Let $A = \k \la x, y, z \ra/\la xy-yx, yz-zy, zx \ra.$ Then:
\begin{itemize}
\item[(1)] the algebra morphism $\t_A: A \to \mbfB(A)$ is surjective; \item[(2)] $\mbfB(A)$ is $1$-generated;
\item[(3)] $\ker \, \t_A = \la xyz \ra$.
\end{itemize}
Consequently, $\mbfB(A) \cong A/\la xyz \ra$.
\end{thm}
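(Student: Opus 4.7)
The strategy is to deduce (1) and (2) from a Hilbert series comparison, and then to identify $\ker\,\tau_A$ explicitly, using the local cohomology calculation and the reducedness of $\U_d(A)$ already established. The heavy lifting has been done by Theorem \ref{Hilbert series of B(A), Ore case 2}, Proposition \ref{local cohomology in degree 1}, and Theorem \ref{Ore case (2) is reduced}; what remains is a routine combination of these with a direct computation in $A$. The only mildly delicate step is verifying that $\la xyz\ra$ is precisely the span of monomials with all three exponents positive, which hinges on the collapsing effect of the relation $zx=0$.

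For parts (1) and (2), I would first record the relevant dimensions. Since $A$ has $\k$-basis $\{x^iy^jz^k\}$, we have $\dim_{\k}A_d=\binom{d+2}{2}$. Theorem \ref{Hilbert series of B(A), Ore case 2} gives $\dim_{\k}\mbfB(A)_d=3d$ for $d\ge 1$. Combining Theorem \ref{kernel-image via local cohomology}(1) with Proposition \ref{local cohomology in degree 1} yields $\dim_{\k}\ker(\tau_A)_d = 0$ for $d\le 2$ and $\dim_{\k}\ker(\tau_A)_d=\binom{d-1}{2}$ for $d\ge 3$. An elementary check shows $\binom{d+2}{2}-\binom{d-1}{2}=3d$, so $\dim_{\k}\im(\tau_A)_d=\dim_{\k}\mbfB(A)_d$ in every degree, proving (1). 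Statement (2) then follows since $A$ is generated in degree $1$ and $\tau_A$ is a surjective graded algebra morphism (alternatively, invoke Theorem \ref{kernel-image via local cohomology}(2)).

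For part (3), since $\U_d(A)$ is reduced by Theorem \ref{Ore case (2) is reduced}, Corollary \ref{kernel of tau} gives $\ker(\tau_A)_d=J_d$, and the proof of that corollary combined with Proposition \ref{local cohomology in degree 1} identifies $\ker(\tau_A)_d$, via the linear isomorphism $\widetilde{\iota}$, with $\bigoplus_{i,j,k\ge 1,\,i+j+k=d}\k\,x^iy^jz^k\subset A_d$. It remains to show $\la xyz\ra$ agrees with this subspace. For the containment $\supseteq$, the commutation relations $xy=yx$ and $yz=zy$ give $x^iy^jz^k = x^{i-1}y^{j-1}\cdot xyz\cdot z^{k-1}\in\la xyz\ra$ whenever $i,j,k\ge 1$. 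For the reverse containment, any element of $\la xyz\ra$ is a $\k$-linear combination of products $x^{\alpha}y^{\beta}z^{\gamma}\cdot xyz\cdot x^{\alpha'}y^{\beta'}z^{\gamma'}$; the relation $zx=0$ forces such a product to vanish unless $\gamma=0$ and $\alpha'=0$, in which case the commutation relations collapse it to $x^{\alpha+1}y^{\beta+\beta'+1}z^{\gamma'+1}$, which lies in the asserted span. Hence $\la xyz\ra=\ker\,\tau_A$, and the final isomorphism $\mbfB(A)\cong A/\la xyz\ra$ is then immediate from (1) and (3).
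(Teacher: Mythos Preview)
Your proposal is correct and follows essentially the same approach as the paper: the same dimension identities $\binom{d+2}{2}-\binom{d-1}{2}=3d$ drive both arguments, and your treatment of part (3) matches the paper's, with a bit more detail on why $\langle xyz\rangle$ equals the asserted span. The only cosmetic difference is that the paper phrases the count for (1)--(2) as showing $H^1_{\mf m}({\mathbf U}_d(A))_1=0$ via the four-term exact sequence and then invoking Theorem~\ref{kernel-image via local cohomology}(2), whereas you compare $\dim\im(\tau_A)_d$ directly to $\dim\mbfB(A)_d$; these are equivalent reformulations of the same arithmetic.
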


\begin{proof}
The first two statements follow from Proposition \ref{kernel-image via local cohomology} once we show that $H^1_{\mf m}({\mathbf U}_d(A))_1 = 0$ for all $d \geq 2$.
Recall the exact sequence: $$\xymatrix{
0\to H^0_{\mf m}({\mathbf U}_d(A))_1  \ar[r] & {\mathbf U}_d(A)_1  \ar[r]^-{\z_1} & H^0(\U_d(A), \mcL_d) \ar[r] & H^1_{\mf m}({\mathbf U}_d(A))_1 \to 0.
}$$
Hence we have $$\dim_{\k} H^1_{\mf m}({\mathbf U}_d(A))_1 = \dim_{\k} H^0_{\mf m}({\mathbf U}_d(A))_1 - \dim_{\k} {\mathbf U}_d(A)_1+ \dim_{\k} H^0(\U_d(A), \mcL_d).$$ By Theorem \ref{Hilbert series of B(A), Ore case 2}, $\dim_{\k} H^0(\U_d(A), \mcL_d) = 3d$ for all $d \geq 2$. Proposition \ref{local cohomology in degree 1} implies that $\dim_{\k} H^0_{\mf m}({\mathbf U}_d(A))_1 = \binom{d-1}{2}$ for all $d \geq 3$. And since $\widetilde \iota: A_d \to {\mathbf U}_d(A)_1$ is a $\k$-linear isomorphism, $\dim_{\k} {\mathbf U}_d(A)_1 = \binom{d+2}{2}$ for all $d \geq 2$. It follows that $\dim_{\k} H^1_{\mf m}({\mathbf U}_d(A))_1 = 0$ for all $d \geq 2$.

Note that it follows from Proposition \ref{local cohomology in degree 1} that $(\ker \, \t_A)_d $ is the $\k$-linear span of the $x^iy^jz^k$, $ijk \geq 1$. It is very easy to prove, using the defining relations of $A$, that the space $\la xyz \ra_d$ is also equal to this span. Hence  $\ker \, \t_A = \la xyz \ra$.

\end{proof}
%

\subsection{Example 2.} 
Let $$A=\k\la x,y,z\ra/\la xy-yx,\ zx,\ zy\ra.$$ This is the algebra $S(0,0)$ from \cite[Theorem 5.2]{CG1}. We will use Proposition \ref{kernel-image via local cohomology} to prove that $\mbfB(A)\cong A$ by showing, in Theorem \ref{regSeqOreCase3} below, that ${\rm depth}\ {\mathbf U}_d(A)\ge 2$ for all $d\ge 2$. 

Again, via Proposition \ref{alt pres}, we view each ${\mathbf U}_d(A)$ as the subalgebra of 
$$\frac{\k[x_i, y_i, z_i : 0\le i\le d-1]}{(x_iy_{i+1}-y_ix_{i+1}, z_ix_{i+1}, z_iy_{i+1} : 0\le i\le d-2)}$$
generated by elements of the form $m_{ijk}=\widetilde{\i}(x^iy^jz^k)$ for $i+j+k=d$. Because we will not fix $d$ and instead work with the entire family of algebras ${\mathbf U}_d(A)$, we let $S(d)=\k[x_i, y_i, z_i : 0\le i\le d-1]$ and $$K(d)=(x_iy_{i+1}-y_ix_{i+1}, z_ix_{i+1}, z_iy_{i+1} : 0\le i\le d-2).$$ One readily checks that, in the standard grading, the ideal $K(d)$ has a quadratic Gr\"obner basis with respect to the degree-lexicographic order where $x_i<y_i<z_i<x_{i+1}<y_{i+1}<z_{i+1}$ for all $0\le i\le d-2$.

The algebra $S(d)$ carries many (multi)gradings of which we highlight a few:
\begin{itemize}
\item An $\N^{3d}$-grading, where the components of the multidegree correspond to each of the $3d$ generators. We denote the multidegree component corresponding to $x_i$ by $\deg_{x_i}$ and similarly for $y_i$ and $z_i$.
\item An $\N^d$-grading which collapses the $\N^{3d}$-grading by summing $\deg_{x_i}, \deg_{y_i}, \deg_{z_i}$ for each $i$.
\item $\N$-gradings which collapse the $\N^{3d}$-grading by forgetting all multidegree components except a single $\deg_{z_i}$. We call this the $z_i$-degree.
\item An $\N^3$-grading which collapses the $\N^{3d}$-grading by summing all $x_i$-degree components $\deg_{x_0},$ $\deg_{x_1},\ldots, \deg_{x_{d-1}}$, and similarly for $y$ and $z$. We call the components of this multidegree $x$-, $y$-, and $z$-degrees, denoted $\deg_x$, $\deg_y$, and $\deg_z$. 
\end{itemize}

The generators of $K(d)$ are homogeneous with respect to the two $\N^d$-gradings, the $\N^3$ and the $\N$-grading, so these are inherited by $S(d)/K(d)$. 
Recall from Proposition \ref{alt pres} that $\mathbf{U}_d(A)$ is isomorphic to the subalgebra of $S(d)/K(d)$ generated in $\N^d$-degree $(1,1,\ldots,1)$, and $\mathbf{U}_d(A)$ is thereby endowed with another $\N$-grading, which we call the $U$-degree, in addition to the inherited $z_i$-degree gradings. The $U$-degree of a homogeneous element $q\in \mathbf{U}_d(A)$ will be denoted by $\deg_U q$.

Our depth calculation will be inductive, making use of certain subalgebras of ${\mathbf U}_d(A)$. For each $0\le k\le d$, 
let 
\begin{align*}
D(d,k) &= \k[ m_{i,d-i-k,k} : 0\le i\le d-k]\subseteq {\mathbf U}_d(A),\quad\text{and}\\
R(d,k) &= \prod_{j=0}^k D(d,k)\subseteq {\mathbf U}_d(A)
\end{align*}
be unital subalgebras. Clearly, ${\mathbf U}_d(A) = R(d,d)$. 

Since the generators of the subalgebras $D(d,k)$ and $R(d,k)$ have $U$-degree 1, these subalgebras inherit the $U$-degree $\N$-grading from $\mathbf{U}_d(A)$. Similarly, since the generators of $D(d,k)$ and $R(d,k)$ are homogeneous with respect to $z_i$-degree, these subalgebras are also graded by $z_i$-degree.

Observe that for $1\le k\le d$ there are (non-graded) $\k$-algebra maps 
\begin{align*}
\psi_{d,k}:D(d,k)&\to D(d-k,0)\cong\k[x,y]_{(d-k)} \\ 
m_{i, d-i-k,k} &\mapsto x^i y^{d-i-k}
\end{align*}
and
\begin{align*}
 \xi_{d,k}:D(d,k-1)D(d,k)&\to R(d-k+1,1)\\
 m_{i,d-i-k+1,k-1}&\mapsto m_{i,d-i-k+1,0}\\
 m_{i,d-i-k,k}&\mapsto m_{i,d-i-k,1}
\end{align*}
where $\k[x,y]_{(d-k)}$ is the $(d-k)$-Veronese subalgebra of $\k[x,y]$. To see these maps are well-defined, recall that the only terms containing a $z_i$ in the quadratic Gr\"obner basis for $K(d)$ are $z_ix_{i+1}$ and $z_iy_{i+1}$, and such expressions do not arise in subalgebras generated by $m_{i,d-i-k+1,k-1}$ and $m_{i,d-i-k,k}$.

We record a few useful facts about $D(d,k)$, $\psi_{d,k}$, and $\xi_{d,k}$ for use below. The proofs are straightforward.

\begin{lemma}
\label{example2facts}
For all $d\ge 2$, and all $0\le k<d$,
\begin{enumerate}
\item The map $\psi_{d,k}$ is an isomorphism. In particular, 
\begin{enumerate}
\item $D(d,k)$ is a domain,
\item $D(d,k)\cong D(d-k,0)$,
\item $\{m_{d-k,0,k}, m_{0,d-k,k}\}$ is a regular sequence in $D(d,k)$.
\end{enumerate}
\item The map $\xi_{d,k}$ is an isomorphism.
\item Whenever $|i-j|>1$, we have $D(d,i)_+D(d,j)_+=0$.
\end{enumerate}
\end{lemma}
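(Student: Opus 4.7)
The strategy is to pass to the alternative presentation of Proposition \ref{alt pres} and exploit the factorization, in $S(d)/K(d)$, of each generator as $m_{i,d-i-k,k}=u_i\cdot Z_k$, where $u_i=x_0\cdots x_{i-1}y_i\cdots y_{d-k-1}$ and $Z_k=z_{d-k}\cdots z_{d-1}$. Only the $u_i$ involve variables of index $<d-k$, and only $Z_k$ involves variables of index $\ge d-k$; consequently the relations $z_\ell x_{\ell+1}$ and $z_\ell y_{\ell+1}$ play no role within $D(d,k)$, while the relations $x_\ell y_{\ell+1}-y_\ell x_{\ell+1}$ interact only with the $u_i$ part.

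For part (1), let $B$ be the polynomial subring of $S(d)$ generated by $\{x_j,y_j:j\le d-k-1\}\cup\{z_j:j\ge d-k\}$, and define a section $\pi\colon S(d)\to B$ by sending the remaining variables to $0$. A direct check over each type of generator of $K(d)$ will show $\pi(K(d))\subseteq K'$, where $K'$ is the ideal of $B$ generated by $\{x_jy_{j+1}-y_jx_{j+1}:0\le j\le d-k-2\}$; so the natural inclusion $B/K'\hookrightarrow S(d)/K(d)$ is split, hence injective. Since $B/K'$ factors as a tensor product of $\k[x_j,y_j:j\le d-k-1]/K'$ with the polynomial ring $\k[z_{d-k},\ldots,z_{d-1}]$, multiplication by $Z_k$ on $B/K'$ is injective. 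Theorem \ref{functor on commutative} identifies the subring $\k[u_0,\ldots,u_{d-k}]\subseteq B/K'$ with $D(d-k,0)\cong \k[x,y]_{(d-k)}$, so $\psi_{d,k}$ becomes an isomorphism, giving (a) and (b). For (c), $\psi_{d,k}$ sends $m_{d-k,0,k}$ and $m_{0,d-k,k}$ to $x^{d-k}$ and $y^{d-k}$, a homogeneous system of parameters in the $2$-dimensional Cohen--Macaulay domain $\k[x,y]_{(d-k)}$; hence they form a regular sequence.

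Part (2) follows from an analogous construction. The ``kill every variable of index $\ge d-k+1$'' section furnishes an embedding $\iota\colon S(d-k+1)/K(d-k+1)\hookrightarrow S(d)/K(d)$. With $Y=z_{d-k+1}\cdots z_{d-1}$, define a graded $\k$-linear map $\Phi\colon R(d-k+1,1)\to \mathbf{U}_d(A)$ by $\Phi(a)=\iota(a)\cdot Y^n$ on $R(d-k+1,1)_n$. Since $Y$ commutes with $\iota(a)$, $\Phi$ is a graded ring homomorphism; on generators it maps precisely onto the generators of $D(d,k-1)\cdot D(d,k)$ prescribed by $\xi_{d,k}$, and injectivity again follows from the $Y$-torsion-free property of the analogous embedded subring, as in (1). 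Thus $\xi_{d,k}=\Phi^{-1}$ is an isomorphism.

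Finally, (3) is a direct computation. If $j\ge i+2$, then in any product $m_{a,d-a-i,i}\cdot m_{b,d-b-j,j}$ the factor $z_{d-j}$ from the second term sits at position $d-j$; since $d-j+1\le d-i-1$, position $d-j+1$ in the first term is occupied by $x_{d-j+1}$ or $y_{d-j+1}$, and the relation $z_{d-j}x_{d-j+1}=0$ or $z_{d-j}y_{d-j+1}=0$ in $K(d)$ forces the product to vanish. The case $i\ge j+2$ is symmetric. The main technical obstacle across the lemma will be verifying that the sections in (1) and (2) really do land in the expected subideals, which requires a case-by-case check over each type of generator of $K(d)$; once that is in place, the ``factor out the trailing $z$'s'' strategy reduces the rest to standard facts about Veronese subalgebras of polynomial rings.
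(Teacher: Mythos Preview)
Your proof is correct. The paper does not give a proof of this lemma, stating only that ``the proofs are straightforward,'' so there is nothing to compare against; you have supplied the details the authors omitted. Your approach---working in the presentation of Proposition~\ref{alt pres}, constructing explicit splittings $S(d)/K(d)\to B/K'$ (respectively $S(d)/K(d)\to S(d-k+1)/K(d-k+1)\otimes\k[z_{d-k+1},\dots,z_{d-1}]$) to embed the relevant subalgebras into tensor products where the trailing $z$-factors are visibly nonzerodivisors, and then invoking Theorem~\ref{functor on commutative} to identify the $z$-free part with a Veronese subring---is a clean and natural way to make the ``straightforward'' claim precise. The direct computation for (3) using $z_\ell x_{\ell+1}=z_\ell y_{\ell+1}=0$ is exactly what one would expect.
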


That ${\mathbf U}_d(A)$ can be written as a product of subalgebras satisfying the orthogonality condition in Lemma \ref{example2facts}(3), together with the fact that $\xi_{d,k}$ is an isomorphism for all $d$ and $k$, suggests the approach of constructing regular sequences in the algebras ${\mathbf U}_d(A)$ by finding regular sequences in the subalgebras $R(d,1)$, and combining them with their images under the maps $\xi_{d,k}^{-1}$. Indeed, this will be our approach.

For each $d\ge 2$ and $0\le k\le d$, let 
$$u_{d,k} = \sum\limits_{i=0}^{k} m_{d-i,0,i}\quad\text{ and }\quad v_{d,k}=\sum\limits_{j=0}^{k} m_{0,d-j,j}.$$
The goal of this example is to prove that $\{u_{d,d}, v_{d,d}\}$ is a regular sequence in $\mathbf{U}_d(A)$. 
We will start by proving that $\{u_{d,1}, v_{d,1}\}$ is regular sequence in $R(d,1)$ for all $d\ge 2$, and then prove inductively that for all $d\ge 2$ and all $1\le k\le d$, $\{u_{d,k}, v_{d,k}\}$ is a regular sequence in $R(d,k)$. The following technical lemma is key to this approach.

\begin{lemma}
\label{syzygyOreCase3}
Let $\varphi:R(d,1)^2 \to R(d,1)$ be given by 
$$\varphi(g,h)=gm_{d,0,0}+hm_{0,d,0}.$$
Then $\ker \varphi$ is generated by $(m_{0,d,0},\ -m_{d,0,0})$ and, for all $0\le j\le d-2,$
$$(m_{0,d-1,1}m_{j,d-j-1,1},\ -m_{d-1,0,1}m_{j+1,d-j-2,1}).$$
Moreover, if $q\in R(d,1)$ is $U$- and $z_{d-1}$-homogeneous such that $\deg_U q > \deg_{z_{d-1}} q$ then for all $0\le j\le d-2,$ $$q(m_{0,d-1,1}m_{j,d-j-1,1},\ -m_{d-1,0,1}m_{j+1,d-j-2,1})\in R(d,1)(m_{0,d,0},\ -m_{d,0,0}).$$
\end{lemma}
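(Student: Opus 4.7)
My plan is to embed $R(d,1)$ into the polynomial ring $\k[x,y,z]$ via a natural $\k$-algebra map $\phi$, reducing the problem to classifying syzygies of the regular sequence $\{x^d, y^d\}$ in a UFD. Specifically, I would define $\phi: R(d,1) \to \k[x,y,z]$ by $\phi(m_{i,d-i,0}) = x^i y^{d-i}$ and $\phi(m_{j,d-j-1,1}) = x^j y^{d-1-j} z$. Well-definedness reduces to verifying that the derived relations in $R(d,1)$ (most importantly, the product $m_{i,d-i-k,k} m_{i',d-i'-k',k'}$ depending only on $i + i'$ within each bigraded piece) are respected by the images, which follows from monomial identities in $\k[x,y,z]$. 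Injectivity can be established by observing that each bigraded component $R(d,1)_n^{(p)}$ (by $U$-degree $n$ and $z_{d-1}$-degree $p$) is spanned by products of $n-p$ generators of the form $m_{\cdot,\cdot,0}$ and $p$ generators of the form $m_{\cdot,\cdot,1}$, indexed up to the derived relations by the sum of their first indices; then $\phi$ sends this spanning set bijectively onto a linearly independent set of monomials in $\k[x,y,z]$.

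Given $\phi$, the first statement follows by a Koszul-type argument. Verifying the listed pairs are in $\ker \varphi$ is routine: $(m_{0,d,0}, -m_{d,0,0})$ is the commutativity Koszul syzygy, and the identity $m_{0,d-1,1} m_{j,d-j-1,1} \cdot m_{d,0,0} = m_{d-1,0,1} m_{j+1,d-j-2,1} \cdot m_{0,d,0}$ reduces to a monomial equality in $\k[x,y,z]$ after applying $\phi$. Conversely, given $(g,h) \in \ker \varphi$, one has $\phi(g) x^d + \phi(h) y^d = 0$, so $\phi(g) = y^d F$ and $\phi(h) = -x^d F$ for some $F \in \k[x,y,z]$. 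The image $\phi(R(d,1))$ consists of monomials $x^a y^b z^c$ with $d \mid a+b+c$ and $c(d-1) \le a+b$. Requiring both $y^d F$ and $x^d F$ to lie in $\phi(R(d,1))$ forces each monomial of $F$ to satisfy $a + b \ge c(d-1) - d$. Monomials of $F$ meeting the stronger condition $a + b \ge c(d-1)$ already lie in $\phi(R(d,1))$ and contribute Koszul multiples of $(m_{0,d,0}, -m_{d,0,0})$; the remaining ``extra'' monomials must satisfy $a + b = c(d-1) - d$, and each such monomial factors as $\phi(q) \cdot x^j y^{d-j-2} z^2$ for some product $q$ of $m_{\cdot, \cdot, 1}$-type generators and some $0 \le j \le d-2$, exhibiting the extra part of the syzygy as an $R(d,1)$-multiple of the second family of listed generators.

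For the moreover statement, the hypothesis $\deg_U q > \deg_{z_{d-1}} q$ implies that the element $\phi(q) \cdot x^j y^{d-j-2} z^2$ has $z$-degree $\deg_{z_{d-1}} q + 2 \le \deg_U q + 1$, which equals its $U$-degree in $\phi(R(d,1))$; hence it lies in the image. By injectivity of $\phi$ there is a unique lift $k \in R(d,1)$, and matching $\phi$-images gives both $q m_{0,d-1,1} m_{j,d-j-1,1} = m_{0,d,0} k$ and $q m_{d-1,0,1} m_{j+1,d-j-2,1} = m_{d,0,0} k$, so the pair equals $k (m_{0,d,0}, -m_{d,0,0})$, as claimed. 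The main technical obstacle I anticipate is rigorously establishing injectivity of $\phi$: while the bigraded Hilbert series of $R(d,1)$ and $\phi(R(d,1))$ can be matched by direct counting, producing a clean proof may require an explicit normal-form argument using the multilinearization relations of $K(d)$, or an inductive argument in $d$ that leverages the isomorphisms $\xi_{d,k}$ from Lemma \ref{example2facts}.
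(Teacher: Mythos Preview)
Your approach is correct and genuinely different from the paper's. The paper argues intrinsically inside $R(d,1)$: it reduces to $U$- and $z_{d-1}$-homogeneous syzygies, uses the $\N^3$-grading to assume $\deg_y g\ge d$ and $\deg_x h\ge d$, and then splits into cases by $z_{d-1}$-degree. In $z_{d-1}$-degree $0$ it invokes the regular sequence $\{m_{d,0,0},m_{0,d,0}\}$ in $D(d,0)$; in higher $z_{d-1}$-degree it uses the straightening relations $m_{a,*,*}m_{b,*,*}=m_{a+1,*,*}m_{b-1,*,*}$ together with the degree constraints to peel off a factor of $m_{0,d,0}$ (resp.\ $m_{0,d-1,1}$) from $g$ and $m_{d,0,0}$ (resp.\ $m_{d-1,0,1}$) from $h$, then appeals to regularity of the relevant product. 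The ``moreover'' clause is handled by the same Gr\"obner-basis manipulations.

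Your route trades this case analysis for one structural fact: the injectivity of $\phi:R(d,1)\hookrightarrow\k[x,y,z]$. Once that is in hand, everything reduces to the Koszul syzygy of $\{x^d,y^d\}$ in a UFD, and the combinatorics of which monomials lie in $\phi(R(d,1))$ is exactly the inequality $c(d-1)\le a+b$ that you identify; the ``extra'' monomials with $a+b=c(d-1)-d$ force $c\ge 2$ and produce precisely the second family of generators. The paper's argument is self-contained but somewhat ad hoc; yours is cleaner and explains \emph{why} the extra syzygies appear (they come from monomials of $F$ just outside the image), at the cost of one auxiliary lemma. Regarding your anticipated obstacle: injectivity of $\phi$ is not hard here. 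The Gr\"obner basis for $K(d)$ shows that $R(d,1)$ sits inside the domain $\k[x_i,y_i,z_{d-1}]/(x_iy_{i+1}-y_ix_{i+1})$, and the straightening relations imply that a monomial in $R(d,1)$ is determined by its $(\deg_U,\deg_{z_{d-1}},\deg_x)$-triple; since $\phi$ separates these triples (sending such a monomial to $x^{\deg_x}y^{\,\cdot}z^{\deg_{z_{d-1}}}$ with total degree $d\cdot\deg_U$), injectivity follows. No induction on $d$ or appeal to $\xi_{d,k}$ is needed.
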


\begin{proof} We provide only an outline of the argument, leaving the details to the reader.

Using the Gr\"obner basis for $K(d)$, it is straightforward to check that the listed pairs are contained in the kernel, and that the last statement holds.

To show the list of generators is sufficient, let $(g,h)\in \ker\ \varphi.$ Since $\varphi$ is both $U$- and $z_{d-1}$-homogeneous, we may assume the same of $(g,h)$. The $\N^3$ grading inherited by $R(d,1)$ enables us to assume without loss of generality that  $\deg_y g\ge d$ and $\deg_x h\ge d$.

If $g, h$ have $z_{d-1}$-degree 0, then the fact that $\{m_{d,0,0}, m_{0,d,0}\}$ is a regular sequence in $D(d,0)$ implies $(g,h)\in D(d,0)(m_{0,d,0},\ -m_{d,0,0})$.

For the higher $z_{d-1}$-degree cases (degree 1 and degree $\ge 2$) observe that the relations $x_iy_{i+1}-y_ix_{i+1}\in K(d)$ imply relations of the form
$$m_{a,d-a,0}m_{b,d-b,0}-m_{a+1,d-a-1,0}m_{b-1,d-b+1,0},$$ 
$$m_{a,d-a,0}m_{b,d-b-1,1}-m_{a+1,d-a-1,0}m_{b-1,d-b,1},$$ 
$$m_{a,d-a-1,1}m_{b,d-b-1,1}-m_{a+1,d-a-2,1}m_{b-1,d-b,1}$$
hold in $R(d,1)$. 

If $g,h$ have $z_{d-1}$-degree 1, the fact that $\deg_y g\ge d$ and $\deg_x h\ge d$, together with the first two types of relations above imply that $g\in R(d,1)_+m_{0,d,0}$ and $h\in R(d,1)_+m_{d,0,0}$. A Gr\"obner basis argument shows $m_{d,0,0}m_{0,d,0}$ is regular in $R(d,1)$, so $(g,h)\in D(d,0)(m_{0,d,0},\ -m_{d,0,0})$ in this case as well.

If $g,h$ have $z_{d-1}$-degree $\ge 2$, the argument is analogous, incorporating the third relation type above to show $g\in R(d,1)_+m_{0,d-1,1}$ and $h\in R(d,1)_+m_{d-1,0,1}$. Note that $m_{d-1,0,1}m_{0,d-1,1}$ has neither the required $x$-degree nor the $y$-degree to be a term of $g$ or $h$.

\end{proof}

\begin{lemma}
\label{annihilatorsOreCase3}
Let $d\ge 2$ and for each $0\le k\le d$ let
$$J(d,k)=\begin{cases} (x_i, y_i\ :\ d-k+1\le i\le d-1) & 2\le k\le d\\ (0) & k=0,1\end{cases}$$
be an ideal of $S(d)/K(d)$. Then 
$${\rm ann}_{R(d,k)}(m_{0,d-k,k})={\rm ann}_{R(d,k)}(m_{d-k,0,k})=J(d,k)\cap R(d,k).$$
\end{lemma}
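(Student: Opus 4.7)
First I will dispose of the easy inclusion and the trivial case. The case $k = 0$ is immediate since $R(d,0) \cong \k[x,y]_{(d)}$ is a domain by Lemma \ref{example2facts}. For general $k$, the inclusion $J(d,k) \cap R(d,k) \subseteq {\rm ann}_{R(d,k)}(m_{0,d-k,k})$ follows at once: for each $d-k+1 \le i \le d-1$, the products $x_i \cdot m_{0,d-k,k}$ and $y_i \cdot m_{0,d-k,k}$ contain the factors $z_{i-1}x_i = 0$ or $z_{i-1}y_i = 0$ (since $i-1 \ge d-k$); the argument for $m_{d-k,0,k}$ is identical.

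The reverse inclusion for $k \ge 1$ is the crux. My plan is to construct a graded $\k$-algebra homomorphism $\phi: R(d,k) \to \k[x,y,z]$ with $\ker \phi = J(d,k) \cap R(d,k)$ such that $\phi(m_{0,d-k,k}) = y^{d-k}z$ and $\phi(m_{d-k,0,k}) = x^{d-k}z$. Given such $\phi$, the reverse inclusion follows at once: any $q \in R(d,k)$ with $q \cdot m_{0,d-k,k} = 0$ will satisfy $\phi(q)\,y^{d-k}z = 0$ in the domain $\k[x,y,z]$, forcing $\phi(q) = 0$, so $q \in \ker \phi$; the argument for $m_{d-k,0,k}$ is symmetric.

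I will build $\phi$ as the composite $R(d,k) \twoheadrightarrow R(d,k)/(J(d,k)\cap R(d,k)) \xto{\sim} R(d',1) \xto{\phi'} \k[x,y,z]$ with $d' := d-k+1$. For the middle isomorphism, I use the natural identification $S(d)/(K(d)+J(d,k)) \cong (S(d')/K(d')) \otimes_\k \k[z_{d'},\ldots,z_{d-1}]$; a direct check shows that under this identification, $m_{i,d-i-j,j}$ vanishes for $j \le k-2$ (its $y$-support reaches position $d-j-1 \ge d'$, where $y$ is killed), while for $j \in \{k-1,k\}$ it becomes the corresponding generator of $R(d',1)$ multiplied by $Z := z_{d'}\cdots z_{d-1}$. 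Since $Z$ is regular in the polynomial extension, factoring $Z^n$ out of $U$-degree $n$ will yield the desired graded isomorphism, identifying $m_{0,d-k,k}$ with $m'_{0,d'-1,1}$ and $m_{d-k,0,k}$ with $m'_{d'-1,0,1}$.

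The map $\phi'$ will be defined on generators by $m'_{i,d'-i,0} \mapsto x^iy^{d'-i}$ and $m'_{i,d'-i-1,1} \mapsto x^iy^{d'-i-1}z$. The main obstacle will be proving $\phi'$ is a well-defined injection. My plan is to check that the Veronese relations within $D(d',0)$ and $D(d',1)$, together with the sliding relations $m'_{i+1,d'-i-1,0}\,m'_{j,d'-j-1,1} = m'_{i,d'-i,0}\,m'_{j+1,d'-j-2,1}$, hold in $\k[x,y,z]$, and then use a dimension count to conclude they generate all relations among the generators of $R(d',1)$. Specifically, in bi-degree $(r,s)$ (counting factors from $D(d',0)$ and $D(d',1)$ respectively), Veronese and sliding will collapse products to the single invariant $T \in [0,\, rd'+s(d'-1)]$ given by the total $x$-weight, while distinct $T$-values will remain linearly independent in $S(d')/K(d')$ via the $x$-weight grading (preserved by $K(d')$), yielding $\dim R(d',1)^{(r,s)} = rd'+s(d'-1)+1$. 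This will match the count of degree-$nd'$ monomials in $\k[x,y,z]$ with $z$-exponent $s$, so $\phi'$ will send distinct $T$-indexed elements to distinct monomials, confirming the well-defined injection; finally, $\phi(m_{0,d-k,k}) = y^{d-k}z$ and $\phi(m_{d-k,0,k}) = x^{d-k}z$ as required.
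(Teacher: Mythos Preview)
Your proposal is essentially correct and shares the paper's core strategy: both arguments show that $R(d,k)/(J(d,k)\cap R(d,k))$ is a domain in which $m_{0,d-k,k}$ and $m_{d-k,0,k}$ are nonzero. The paper does this in a few lines by observing (via the quadratic Gr\"obner basis of $K(d)$) that this quotient sits inside a ring $T'$ built from the variables $\{x_i,y_i:0\le i\le d-k\}\cup\{z_j:d-k\le j\le d-1\}$ and that the surviving relations make $T'$ a domain. Your route is more elaborate: you first establish the isomorphism $R(d,k)/(J(d,k)\cap R(d,k))\cong R(d',1)$ with $d'=d-k+1$ (a nice structural observation the paper does not make explicit), and then embed $R(d',1)$ into $\k[x,y,z]$ by a presentation-and-dimension argument. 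Your approach buys an explicit polynomial model for the quotient; the paper's buys brevity.

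Two small points to tighten. First, to ``collapse products to a single invariant $T$'' in $R(d',1)$ you are using the Veronese and sliding relations \emph{in $R(d',1)$}, so you must verify they hold there (not merely in $\k[x,y,z]$); this follows directly from $x_iy_{i+1}=y_ix_{i+1}$ in $S(d')/K(d')$, but say so. Second, your lower bound $\dim R(d',1)^{(r,s)}\ge rd'+s(d'-1)+1$ requires each $T$-indexed monomial to be nonzero in $S(d')/K(d')$, not just that distinct $T$-values are independent. This is immediate once noted: products of the $m'$-generators involve only $z_{d'-1}$ among the $z$-variables, so the monomial Gr\"obner relations $z_ix_{i+1},\,z_iy_{i+1}$ (which require $i\le d'-2$) can never annihilate such a product. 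With these two remarks added, the argument is complete.
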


\begin{proof}
 It is obvious that $J(d,k)\cap R(d,k)$ is contained in the indicated annihilators. Since $R(d,0)=\k[x,y]_{(d)}$ is a domain, the result holds when $k=0$. 
For $k>0$, let $$T(d,k) = \k[x_i, y_i, z_k\ :\ 0\le i\le d-1, j\ge d-k-1]\subseteq S(d).$$ Observe that the ring $T'=T(d,k)/(J(d,k)\cap T(d,k))$ is generated by 
$$\{x_i, y_i\ :\ 0\le i\le d-k\} \cup \{z_j\ :\ d-k\le j\le d-1\}.$$
It is clear from the Gr\"obner basis of $K(d)$ that $T'$ is a domain. Thus the algebra $R(d,k)/(J(d,k)\cap R(d,k))$ is a domain. The result follows.

\end{proof}

\begin{lemma}
\label{regEltsOreCase3}
For any $d\ge 2$ and $0\le k\le d$, the elements $u_k$ and $v_k$ are regular in $R(d,k)$.
\end{lemma}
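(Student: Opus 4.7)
The plan is to induct on $k$ for each fixed $d\ge 2$, proving regularity of $u_{d,k}$; the argument for $v_{d,k}$ is identical, or follows via the $\k$-algebra automorphism of $A$ swapping $x$ and $y$, which descends to ${\mathbf U}_d(A)$ and carries $u_{d,k}$ to $v_{d,k}$. The base case $k=0$ is immediate: by Lemma \ref{example2facts}(1), $R(d,0) = D(d,0) \cong \k[x,y]_{(d)}$ is a domain, so $u_{d,0} = m_{d,0,0}$ is regular.

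For the base case $k=1$, I would show that $R(d,1)$ itself is a domain. Since $R(d,1) \subseteq S(d)/K(d)$ uses only the variables $x_i, y_i$ ($0\le i\le d-1$) and $z_{d-1}$, it embeds in $B_d\tsr_{\k}\k[z_{d-1}]$ with $B_d = \k[x_0,y_0,\ldots,x_{d-1},y_{d-1}]/(x_iy_{i+1}-y_ix_{i+1})$. Although $B_d$ is not a domain for $d\ge 3$, its quotient $B_d^{\rm gen}$ by the full determinantal ideal $I^{\rm gen}$ of $2\times 2$ minors of $\bigl(\begin{smallmatrix} x_0 & \cdots & x_{d-1}\\ y_0 & \cdots & y_{d-1}\end{smallmatrix}\bigr)$ is the homogeneous coordinate ring of the diagonal $\P^1 \hookrightarrow (\P^1)^d$ and hence a domain. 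I would then show the composite $R(d,1) \to B_d^{\rm gen}[z_{d-1}]$ is injective: elements of $R(d,1)$ have uniform position multi-degree $(n,\ldots,n)$, while every uniform multi-degree element of $I^{\rm gen}/I$ vanishes in $B_d$, because each ``excess'' minor satisfies $(x_iy_j-y_ix_j)\prod_{k=i+1}^{j-1}p_k = 0$ in $B_d$ for any nonzero $p_k\in\k[x_k,y_k]$ (proved by induction on $j-i$ using the shift identities $(x_iy_j-y_ix_j)x_{i+1}=x_i(x_{i+1}y_j-y_{i+1}x_j)$ and $(x_iy_j-y_ix_j)y_{i+1}=y_i(x_{i+1}y_j-y_{i+1}x_j)$, which reduce to the base case $j-i=2$). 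Hence $R(d,1)$ embeds in a domain, is itself a domain, and $u_{d,1}, v_{d,1}$ are regular.

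For the inductive step $k\ge 2$, I would combine orthogonality (Lemma \ref{example2facts}(3)) with the triple-product vanishing $D(d,i)_+D(d,i+1)_+D(d,i+2)_+=0$ (verified from the Gr\"obner basis: every generator of $D(d,i)_+$ contains $x_{d-i-1}$ or $y_{d-i-1}$ at position $d-i-1$, which meets $z_{d-i-2}$ present in every generator of $D(d,i+2)_+$ under the relation $z_jx_{j+1}=z_jy_{j+1}=0$) to obtain a $\k$-vector-space decomposition $R(d,k) = R(d,k-1) \oplus D(d,k-1)D(d,k)_+$, witnessed by the $z_{d-k}$-grading. Given $qu_{d,k}=0$, writing $q = q_0 + q_1$ accordingly and expanding, the collapse of cross-terms (again via orthogonality and triple-product vanishing, with $q_0 m_{d-j,0,j}$ dying for $j\le k-2$ and $q_0 m_{d-k,0,k} = \tilde q_0 m_{d-k,0,k}$ for some $\tilde q_0\in D(d,k-1)$ depending linearly on $q_0$) yields two equations: $q_0 u_{d,k-1} = 0$ in $R(d,k-1)$ and $\tilde q_0 m_{d-k,0,k} + q_1(m_{d-k+1,0,k-1}+m_{d-k,0,k}) = 0$ in $D(d,k-1)D(d,k)_+$. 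The first forces $q_0=0$ (and thus $\tilde q_0=0$) by induction. Applying the ring isomorphism $\xi_{d,k}$ of Lemma \ref{example2facts}(2) transforms the second into $\xi_{d,k}(q_1)\cdot u_{d-k+1,1}=0$ in $R(d-k+1,1)$, since $\xi_{d,k}(m_{d-k+1,0,k-1}+m_{d-k,0,k}) = m_{d-k+1,0,0}+m_{d-k,0,1} = u_{d-k+1,1}$; the $k=1$ base case (for $d' = d-k+1 \ge 1$) then forces $\xi_{d,k}(q_1)=0$, hence $q_1=0$.

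The main technical obstacle will be the careful bookkeeping in the inductive step: specifically, verifying that $q_0 m_{d-k,0,k}$ collapses to a $D(d,k-1)$-multiple of $m_{d-k,0,k}$ and that the other cross-terms $q_1 m_{d-j,0,j}$ for $j\le k-2$ vanish, both of which rely on the combined effect of orthogonality and triple-product vanishing applied to the secondary decomposition $R(d,k-1) = R(d,k-2) \oplus D(d,k-2)D(d,k-1)_+$. The multi-degree vanishing that underlies the $k=1$ base case is the technical heart of that step.
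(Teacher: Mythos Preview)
Your argument is correct and takes a genuinely different route from the paper's. The paper does not show that $R(d,1)$ is a domain; instead it invokes Lemma~\ref{annihilatorsOreCase3} (the annihilator computation $\mathrm{ann}_{R(d,k)}(m_{d-k,0,k})=J(d,k)\cap R(d,k)$) at each step. Concretely, the paper writes $a=a_0+\cdots+a_t$ by $z_{d-k}$-degree, observes that the top-degree equation $a_t m_{d-k,0,k}=0$ forces $a_t\in J(d,k)\cap R(d,k)$, and then argues from the Gr\"obner basis that this is incompatible with $t\ge 1$; once $t=0$, the remaining equation $a_0u_{d,k-1}=0$ lives in $R(d,k-1)$ and induction finishes. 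Your approach instead establishes the stronger fact that $R(d,1)$ is a domain (via the determinantal embedding), and for $k\ge 2$ performs a two-step reduction: the $z_{d-k}$-degree-zero projection gives $q_0u_{d,k-1}=0$ in $R(d,k-1)$ (induction on $k$), and the remainder, pushed through $\xi_{d,k}$, becomes a regularity statement in $R(d-k+1,1)$. What your approach buys is the domain statement for $R(d,1)$ and a complete avoidance of Lemma~\ref{annihilatorsOreCase3}; what the paper's approach buys is directness and no need to analyze the multidegree structure of the extra minors in $B_d$.

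Two small points. First, your ``triple-product vanishing'' is already a formal consequence of the orthogonality in Lemma~\ref{example2facts}(3), since the ring is commutative and $D(d,i)_+D(d,i+2)_+=0$; you do not need a separate Gr\"obner verification. Second, when $k=d$ your reduction lands in $R(1,1)$, but Lemma~\ref{example2facts}(2) is only stated for $k<d$ and $R(1,\cdot)$ is not defined in the paper. This is harmless: for $k=d$ one simply observes that $D(d,d-1)D(d,d)$ is generated by $m_{1,0,d-1},m_{0,1,d-1},m_{0,0,d}$, which are $x_0z_1\cdots z_{d-1}$, $y_0z_1\cdots z_{d-1}$, $z_0z_1\cdots z_{d-1}$ in $S(d)/K(d)$, and hence $D(d,d-1)D(d,d)$ is a polynomial ring in three variables, in which $m_{1,0,d-1}+m_{0,0,d}$ is obviously regular. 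You should either note this separately or extend the statement of Lemma~\ref{example2facts}(2).
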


\begin{proof} Fix an arbitrary $d\ge 2$. Recall that $u_{d,0}$ is regular in $R(d,0)=D(d,0)$ by Lemma \ref{example2facts}(1)(iii). 

Let $k\ge 1$ and suppose $a\in R(d,k)$ is $U$-degree homogeneous such that $au_{d,k}=0$. 
Since $R(d,k)$ is also graded by $z_{d-k}$-degree, if $a\neq 0$, there exists a minimal $t\ge 0$ such that 
$a=a_0+a_1+\cdots+a_t$, where $\deg_{z_{d-k}} a_i = i$ and $a_t\neq 0$.

%

In the top $z_{d-k}$-degree, we have $a_tm_{d-k,0,k}=0$. 
We claim that $a_t=0$, which implies $a=0$ by the minimality of $t$. If $k=1$, $a_t=0$ is immediate from Lemma \ref{annihilatorsOreCase3}.
If $k\ge 2$, then by Lemma \ref{annihilatorsOreCase3}, 
$a_t \in J(d,k)\cap R(d,k).$
Since $a_t\in J(d,k)$, there exists $d-k+1\le j\le d-1$ such that $\deg_{x_j} a_t>0$ or $\deg_{y_j} a_t>0$. For such a $j$ we  have $x_j\cdot (z_{d-k}\cdots z_{d-1})=0$ in $S(d)/K(d)$, and similarly for $y_j$. 
Since $a_t\in R(d,k)$, $\deg_{z_{d-i}} a_t\ge t$ for all $1\le i\le k$. It follows that $t=0$. 

Since $t=0$, then $a_0m_{d-k,0,k}=0$ and $au_{d,k}=0$ imply $a_0u_{d,k-1}=0$, which holds in $R(d,k-1)$. By induction, since $u_{d,k-1}$ is regular in $R(d,k-1)$, $a_0=0$.
%
The regularity of $v_k$ is proved analogously.

\end{proof}

\begin{prop}\label{baseRegularSequence}
For all $d\ge 2$, $\{u_{d,1}, v_{d,1}\}$ is a regular sequence in $R(d,1)$. 
\end{prop}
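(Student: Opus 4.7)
The plan is to show $v_{d,1}$ is regular on $R(d,1)/(u_{d,1})$; combined with Lemma \ref{regEltsOreCase3}, this yields regularity of the sequence. Suppose $bv_{d,1} = cu_{d,1}$ in $R(d,1)$. Since $R(d,1) = D(d,0)\cdot D(d,1)$ is graded by $z_{d-1}$-degree (which coincides with total $z$-degree, because the generators of $D(d,1)$ involve only $z_{d-1}$), I decompose $b = \sum b_n$, $c = \sum c_n$ into $z_{d-1}$-homogeneous pieces and aim to build $r = \sum_{n\ge 0} r_n \in R(d,1)$ satisfying $b_n = r_n m_{d,0,0} + r_{n-1} m_{d-1,0,1}$ for every $n$, with $r_{-1}=0$. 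Regularity of $u_{d,1}$ then automatically forces $c = rv_{d,1}$, exhibiting $b\in(u_{d,1})$.

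At $n=0$, the $z_{d-1}$-degree $0$ equation is $b_0 m_{0,d,0} = c_0 m_{d,0,0}$ in the domain $D(d,0)\cong \k[x,y]_{(d)}$, and regularity of $\{m_{d,0,0}, m_{0,d,0}\}$ there provides $r_0 \in D(d,0)$. For the inductive step at $n\ge 1$, having built $r_0,\ldots,r_{n-1}$, set $b'_n = b_n - r_{n-1} m_{d-1,0,1}$ and $c'_n = c_n - r_{n-1} m_{0,d-1,1}$. Substituting the inductive formulas for $b_{n-1}, c_{n-1}$ into the $z_{d-1}$-degree $n$ piece of the syzygy and cancelling via commutativity reduces to $b'_n m_{0,d,0} = c'_n m_{d,0,0}$, so $(-c'_n, b'_n) \in \ker\varphi$. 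Lemma \ref{syzygyOreCase3} then yields
\[
(-c'_n, b'_n) = r_n^{\ast}(m_{0,d,0}, -m_{d,0,0}) + \sum_{j=0}^{d-2} \rho_j \bigl(m_{0,d-1,1}\, m_{j,d-j-1,1},\; -m_{d-1,0,1}\, m_{j+1,d-j-2,1}\bigr),
\]
with $r_n^{\ast}$ of $z_{d-1}$-degree $n$ and each $\rho_j$ of $z_{d-1}$-degree $n-2$. Letting $N = \deg_U b$, in the generic range $n < N$ each $\rho_j$ has $\deg_U \rho_j = N-2 > n-2 = \deg_{z_{d-1}}\rho_j$, and the second clause of Lemma \ref{syzygyOreCase3} absorbs all $\rho_j$-terms into the Koszul generator; reading off the resulting Koszul coefficient (up to sign) yields $r_n$ with $b'_n = r_n m_{d,0,0}$ and $c'_n = r_n m_{0,d,0}$.

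The main obstacle is the boundary case $n = N$: an element $r_n^{\ast}$ of $z_{d-1}$-degree $N$ and $U$-degree $N-1$ cannot exist in $R(d,1)$ since $\deg_{z_{d-1}} \le \deg_U$ there, so $r_n^{\ast} = 0$, and each $\rho_j \in D(d,1)_{N-2}$ lies outside the reach of Lemma \ref{syzygyOreCase3}'s absorption. The resolution is to extract a further constraint from the $z_{d-1}$-degree $N+1$ piece of the original syzygy, which, because $b_{N+1} = c_{N+1} = 0$, collapses to $b'_N m_{0,d-1,1} = c'_N m_{d-1,0,1}$ and, after substitution, reads
\[
m_{0,d-1,1}\, m_{d-1,0,1}\, \sum_j \rho_j \bigl(m_{j,d-j-1,1} - m_{j+1,d-j-2,1}\bigr) = 0 \quad \text{in } D(d,1).
\]
Since $D(d,1)$ is a domain by Lemma \ref{example2facts}(1), the inner sum vanishes; transporting the identity through $\psi_{d,1}$ into $\k[x,y]$, it factors as $(y-x)\sum_j \tilde\rho_j x^j y^{d-j-2} = 0$, and dividing by $y-x$ and then multiplying by $y$ gives $\sum_j \rho_j m_{j,d-j-1,1} = 0$ back in $D(d,1)$. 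Combined with the constraint itself, this forces $b'_N = c'_N = 0$, so $r_N = 0$ closes the induction. The crux of the proof is thus this boundary identification, where one combines the next-$z_{d-1}$-degree constraint with the domain property of $D(d,1)$ to dispose of the Koszul-orthogonal residue.
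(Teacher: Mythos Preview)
Your proof is correct and follows essentially the same strategy as the paper's: decompose $b$ and $c$ by $z_{d-1}$-degree, build the witness $r$ inductively using Lemma~\ref{syzygyOreCase3} at each step, and close the argument by analyzing the top $z_{d-1}$-degree equation. The differences are organizational rather than substantive. The paper performs a look-ahead to the degree $i{+}1$ equation at every step of the induction (using a $U$-degree comparison and a multidegree argument to kill the residual $Q_j$), whereas you separate the range $n<N$---where the second clause of Lemma~\ref{syzygyOreCase3} alone absorbs the $\rho_j$---from the single boundary step $n=N$, where you transport the constraint through $\psi_{d,1}$ into $\k[x,y]$ and divide by $y-x$. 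Your boundary treatment is arguably cleaner and more explicit than the paper's appeal to ``inspecting multidegrees,'' but both arguments are executing the same underlying idea.
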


\begin{proof} Fix an arbitrary $d\ge 2$ and for readability let $u_0=u_{d,0}$ and $v_0=v_{d,0}$, $u_1=u_{d,1}$, and $v_1=v_{d,1}$.
Suppose $au_1+bv_1=0$ for $U$-degree homogeneous $a,b\in R(d,1)$. By Lemma \ref{regEltsOreCase3} it suffices to prove that exists a $U$-homogeneous element $q\in R(d,1)$ such that $a=qv_1$ and $b=-qu_1$.

We may decompose $a$ and $b$ as
$$a=a_0+a_1+\cdots+a_t,\qquad b=b_0+b_1+\cdots + b_{t'},$$
for some $t, t' \geq 0$, where the $a_i$ and $b_i$ are $U$-homogeneous, and $\deg_{z_{d-1}} a_i = \deg_{z_{d-1}} b_i = i$. Without loss of generality, we assume $a_t\neq 0$ and $b_{t'}\neq 0$.

Recall that $R(d,1)$ is $z_{d-1}$-graded. If $t<t'$, examining the highest $z_{d-1}$-degree term of $au_1+bv_1=0$ yields $b_{t'}m_{0,d-1,1}=0.$ As in the proof of the Lemma \ref{regEltsOreCase3}, this implies $b_{t'}=0$, a contradiction. Similarly, $a_t=0$ if $t>t'$, so we conclude $t=t'$. We now establish that each $a_i$ and $b_i$ has a particular form.
\bigskip

\noindent {\bf Claim:} Let $q_{-1}=0$. There exist $q_0, q_1, \ldots, q_t\in R(d,1)$ with $\deg_{z_{d-1}} q_i = i$ such that
$a_i = q_{i-1}m_{0,d-1,1} + q_iv_{0}$ and
$b_i = -q_{i-1}m_{d-1,0,1} - q_iu_{0}.$

\begin{proof}[Proof of Claim]
We proceed by induction on the indices of the $q_i$, examining the component of $au_1+bv_1=0$ in $z_{d-1}$-degree $i$. 

The $z_{d-1}$-degree 0 component of $au_1+bv_1=0$ is $a_0u_{0}+b_0v_{0}=0$. This equation holds in $D(d,0)$, where $\{u_0, v_0\}$ is a regular sequence, by Lemma \ref{example2facts}(1)(iii). Thus we obtain $a_0=q_0v_{0}$ and $b_0=-q_0u_{0}$ for some $q_0\in D(d,0)$, establishing the base case of the induction.

Let $i>0$. The $z_{d-1}$-degree $i$ component of $au_1+bv_1=0$  is
$$a_{i-1}m_{d-1,0,1}+b_{i-1}m_{0,d-1,1}+a_iu_{0}+b_iv_{0}=0.$$
By the induction hypothesis,  
\begin{align*}
&q_{i-1}m_{0,d,0}m_{d-1,0,1}+a_im_{d,0,0}-q_{i-1}m_{d,0,0}m_{0,d-1,1}+b_im_{0,d,0}=0.
\end{align*}
We rewrite this as
$$(a_i-q_{i-1}m_{0,d-1,1})m_{d,0,0} + (b_i+q_{i-1}m_{d-1,0,1})m_{0,d,0}=0.$$
By Lemma \ref{syzygyOreCase3}, and the $z_{d-1}$-grading on $R(d,1)$, there exist $q_i, Q_0,\ldots, Q_{d-2}\in R(d,1)$ such that 
$\deg_{z_{d-1}} q_i=i$, $\deg_{z_{d-1}} Q_j=i-2$ for $0\le j\le d-2$, and 
\begin{align*}
a_i&=q_{i-1}m_{0,d-1,1} + q_im_{0,d,0} + m_{0,d-1,1}\sum_{j=0}^{d-2}Q_j m_{j,d-j-1,1}\\ 
b_i&=-q_{i-1}m_{d-1,0,1} - q_im_{d,0,0} - m_{d-1,0,1}\sum_{j=0}^{d-2}Q_{j} m_{j+1,d-j-2,1}.
\end{align*}

By Lemma  \ref{syzygyOreCase3}, there is no loss of generality in assuming $\deg_U Q_j = \deg_{z_{d-1}} Q_j=i-2$ for all $0\le j\le d-2$. 

We argue that, in fact, all of the $Q_j$ are zero. To see this, consider the component of $au_1+bv_1=0$ in $z_{d-1}$-degree $i+1$:
$$a_{i}m_{d-1,0,1}+b_{i}m_{0,d-1,1}+a_{i+1}u_{0}+b_{i+1}v_{0}=0,$$
where, if $i=t$, then we interpret $a_{i+1}=b_{i+1}=0$.
Examining $U$-degrees, we have
\begin{align*}
\deg_U \left(q_im_{0,d,0}m_{d-1,0,1}-q_im_{d,0,0}m_{0,d-1,1}\right)&\ge i+2,\quad\text{and, if $i<t$,}\\
\deg_U \left(a_{i+1}u_{0}+b_{i+1}v_{0}\right) &\ge i+2,
\end{align*}
but the $U$-degree of
$$m_{d-1,0,1}m_{0,d-1,1}\sum_{j=0}^{d-2}Q_j m_{j,d-j-1,1}-m_{0,d-1,1}m_{d-1,0,1}\sum_{j=1}^{d-1}Q_j m_{j,d-j-1,1}$$is $i+1$.
Since $a_i, b_i$ are $U$-homogeneous, the latter expression must vanish. By inspecting multidegrees, we see that $Q_j=0$ for all $j$.

\end{proof}

The Claim implies that for $\displaystyle q=\sum_{i=0}^{t-1} q_i$, we have
$a=qv_1 + q_tv_{0}$ and $b=-qu_1 - q_tu_{0},$
so it suffices to prove $q_tm_{d-1,0,1}=q_tm_{0,d-1,1}=0$, which implies $q_t=0$ by Lemma \ref{annihilatorsOreCase3}. 

To see this, observe, as in the proof of the claim above, that the component of $au_1+bv_1=0$ in $z_{d-1}$-degree $t+1$ is
$$q_t(m_{0,d,0}m_{d-1,0,1}-m_{d,0,0}m_{0,d-1,1})=0.$$
The fact that $m_{0,d,0}m_{d-1,0,1}$ and $m_{d,0,0}m_{0,d-1,1}$ have different multidegrees implies $q_tm_{d-1,0,1}=q_tm_{0,d-1,1}=0$, as desired.

\end{proof}

Finally we are ready to prove our desired result.

\begin{thm} \label{regSeqOreCase3}
For all $d\ge 2$ and all $1\le k\le d$, $\{u_{d,k}, v_{d,k}\}$ is a regular sequence in $R(d,k)$. In particular, $\{u_{d,d}, v_{d,d}\}$ is a regular sequence in $\mathbf{U}_d(A)$.
\end{thm}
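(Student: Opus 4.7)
The plan is to prove the theorem by induction on $k$, with the base case $k = 1$ being exactly Proposition \ref{baseRegularSequence}.

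For the inductive step, fix $k \ge 2$ and assume $\{u_{d, k-1}, v_{d, k-1}\}$ is a regular sequence in $R(d, k-1)$. Suppose $a u_{d,k} + b v_{d,k} = 0$ for $U$-homogeneous $a, b \in R(d, k)$; the goal is to find $q \in R(d, k)$ with $a = q v_{d, k}$ and $b = -q u_{d, k}$. The first observation is that $R(d, k)$ carries an $\N$-grading by $z_{d-k}$-degree, and the degree-$0$ part is exactly $R(d, k-1)$, since only the generators $m_{i, d-i-k, k}$ of $D(d, k)$ involve $z_{d-k}$. Decomposing $a = a_0 + a_+$ and $b = b_0 + b_+$ by $z_{d-k}$-degree $0$ and $\ge 1$, the degree-$0$ component of the equation reads $a_0 u_{d, k-1} + b_0 v_{d, k-1} = 0$ in $R(d, k-1)$, and the induction hypothesis produces $q_0 \in R(d, k-1)$ with $a_0 = q_0 v_{d, k-1}$ and $b_0 = -q_0 u_{d, k-1}$. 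Replacing $(a, b)$ by $(a - q_0 v_{d, k}, \, b + q_0 u_{d, k})$ then reduces the problem to the case where $a, b$ both have $z_{d-k}$-degree $\ge 1$.

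The key structural observation is that every element of $R(d, k)$ of $z_{d-k}$-degree $\ge 1$ lies in $D(d, k-1)_+ D(d, k)_+ + D(d, k)_+$, which is contained in $D(d, k-1) D(d, k)$. Writing $u_{d, k} = u_{d, k-2} + U_{k-1} + U_k$ with $U_{k-1} = m_{d-k+1, 0, k-1} \in D(d, k-1)$ and $U_k = m_{d-k, 0, k} \in D(d, k)$, and similarly $v_{d, k} = v_{d, k-2} + V_{k-1} + V_k$, the orthogonality in Lemma \ref{example2facts}(3) forces $a u_{d, k-2} = b v_{d, k-2} = 0$. Thus the equation reduces to
\[
a(U_{k-1} + U_k) + b(V_{k-1} + V_k) = 0,
\]
an identity entirely inside $D(d, k-1) D(d, k)$. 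Now apply the isomorphism $\xi_{d, k}: D(d, k-1) D(d, k) \to R(d - k + 1, 1)$ of Lemma \ref{example2facts}(2), under which $U_{k-1} + U_k \mapsto u_{d-k+1, 1}$ and $V_{k-1} + V_k \mapsto v_{d-k+1, 1}$. The image equation is $\xi_{d, k}(a)\, u_{d-k+1, 1} + \xi_{d, k}(b)\, v_{d-k+1, 1} = 0$ in $R(d-k+1, 1)$, and by the base case (or by direct computation in $\k[x, y, z]$ when $d - k + 1 = 1$) there exists $\tilde q \in R(d-k+1, 1)$ with $\xi_{d, k}(a) = \tilde q\, v_{d-k+1, 1}$ and $\xi_{d, k}(b) = -\tilde q\, u_{d-k+1, 1}$.

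Let $q' = \xi_{d, k}^{-1}(\tilde q) \in D(d, k-1) D(d, k) \subseteq R(d, k)$. To conclude $a = q' v_{d, k}$ and $b = -q' u_{d, k}$, it remains to check $q' u_{d, k-2} = q' v_{d, k-2} = 0$, which holds as long as $q'$ has no pure $D(d, k-1)$ component, equivalently $q' \in D(d, k)_+ + D(d, k-1)_+ D(d, k)_+$. Since $\xi_{d, k}$ preserves the $z_{d-k}$-grading ($D(d, k-1) \leftrightarrow D(d-k+1, 0)$ at degree $0$, $D(d, k) \leftrightarrow D(d-k+1, 1)$ at degree $1$), this amounts to showing $\tilde q$ has $z_{d-k}$-degree $\ge 1$ in $R(d-k+1, 1)$. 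The $z_{d-k}$-degree $0$ part of $\xi_{d, k}(a) = \tilde q\, v_{d-k+1, 1}$ reduces to $\tilde q_0 \cdot m_{0, d-k+1, 0} = 0$; but $J(d-k+1, 1) = 0$, so Lemma \ref{annihilatorsOreCase3} shows $R(d-k+1, 1)$ is a domain, forcing $\tilde q_0 = 0$. Hence $q = q_0 + q' \in R(d, k)$ is the desired element, closing the induction.

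The main obstacle is the two-level bookkeeping in the inductive step: the $z_{d-k}$-grading lets one peel off a piece handled by the induction hypothesis, while the orthogonality of Lemma \ref{example2facts}(3) together with the isomorphism $\xi_{d, k}$ reduces the remainder to an application of the base case inside $R(d-k+1, 1)$. The crucial final check that $\tilde q$ has $z_{d-k}$-degree $\ge 1$, derived from the regularity of $m_{0, d-k+1, 0}$ in the domain $R(d-k+1, 1)$, is precisely what lets $q'$ annihilate $u_{d, k-2}$ and $v_{d, k-2}$ so that the two pieces recombine cleanly.
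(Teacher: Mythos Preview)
Your proposal is correct and follows essentially the same strategy as the paper: induction on $k$, invoking the inductive hypothesis on $R(d,k-1)$ for the $z_{d-k}$-degree~$0$ piece and transferring the $z_{d-k}$-degree~$\ge 1$ piece through $\xi_{d,k}$ to $R(d-k+1,1)$ where the base case applies. The only organizational difference is that you subtract off $q_0 v_{d,k}$ first and then handle the remainder, whereas the paper carries both pieces $Q_1, Q_2$ in parallel and matches them on the overlap via the identity $Q_2^{nz}=Q_1^{=}$; your extra step of showing $\tilde q_0=0$ plays the same role as that matching. One small remark: the sentence ``Lemma~\ref{annihilatorsOreCase3} shows $R(d-k+1,1)$ is a domain'' is a slight overstatement of what Lemma~\ref{annihilatorsOreCase3} asserts---the cleanest justification at that point is simply that both $\tilde q_0$ and $m_{0,d-k+1,0}$ lie in the subring $D(d-k+1,0)$, which is a domain by Lemma~\ref{example2facts}(1)(a)---though $R(d-k+1,1)$ is in fact a domain, as one sees from the proof (rather than the statement) of Lemma~\ref{annihilatorsOreCase3} with $J(d-k+1,1)=(0)$.
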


\begin{proof}
The result holds for all $d\ge 2$ and $k=1$ by Proposition \ref{baseRegularSequence}.

Now fix $d\ge 2$, suppose that $k>1$, and write $u_k=u_{d,k}$ and $v_k=v_{d,k}$. By Lemma \ref{example2facts}(2), 
we know that $\{u_{d,k}-u_{d,k-2},v_{d,k}-v_{d,k-2}\} $ is a regular sequence in $D(d,k-1)D(d,k)$. 

Suppose $au_{k}+bv_{k}=0$ for $U$-homogeneous elements $a,b\in R(d,k)$. Again, by Lemma \ref{regEltsOreCase3}, it suffices to prove that exists a $U$-homogeneous element $Q\in R(d,k)$ such that $a=Qv_k$ and $b=-Qu_k$.

Write $a=a'+a''+a'''$ where $a'\in R(d,k-2)$, $a''\in R(d,k-2)D(d,k-1)_+=D(d,k-2)D(d,k-1)_+$, and $a'''\in R(d,k-1)D(d,k)_+=D(d,k-1)D(d,k)_+$. Note that these subalgebra equalities follow from Lemma \ref{example2facts}(3). Similarly decompose $b=b'+b''+b'''.$

By induction, $\{u_{k-1},v_{k-1}\}$ is a regular sequence in $R(d,k-1)$, so there exists some $Q_1\in R(d,k-1)$ such that $a'+a''=Q_1v_{k-1}$ and $b'+b''=-Q_1u_{k-1}$. Similarly, since $\{u_{k}-u_{k-2}, v_{k}-v_{k-2}\}$ is a regular sequence in $D(d,k-1)D(d,k)$, there exists $Q_2\in D(d,k-1)D(d,k)$ such that $a''+a'''=Q_2(v_{k}-v_{k-2})$ and $b''+b'''=-Q_2(u_{k}-u_{k-2})$.

We further decompose $Q_1 = Q_1^{nz}+Q_1^{>}+Q_1^{=}$ where $Q_1^{nz}\in R(d,k-2)$, $Q_1^{=}$ consists of terms where $\deg_U = \deg_{z_{d-k+1}}$, and $Q_1^{>}$  consists of terms where $\deg_U > \deg_{z_{d-k+1}}$. Analogously, we decompose $Q_2=Q_2^{nz} + Q_2^{\ge}$ where $Q_2^{nz}\in D(d,k-1)$ and $Q_2^{\ge}\in D(d,k-1)D(d,k)_+$. Observe that 
\begin{align*}
a' &= Q_1^{nz}v_{k-2}\\
a'' &= Q_1^{nz}m_{0,d-k+1,k-1}+(Q_1^{>}+Q_1^{=})v_{k-1}\\
&=Q_2^{nz}m_{0,d-k+1,k-1}\\
a'''&=Q_2^{nz}m_{0,d-k,k}+Q_2^{\ge}(v_{k}-v_{k-2})\\
&=(Q_2^{nz}+Q_1^{>}+Q_1^{nz})(m_{0,d-k,k}) + Q_2^{\ge}v_{k}
\end{align*}
and from the relation involving $a''$, and the fact that $D(d,k-1)$ is a domain, $Q_2^{nz}=Q_1^{=}$. Thus
\begin{align*}
a&= Q_1v_{k-1}+(Q_2^{nz}+Q_1^{>}+Q_1^{nz})(m_{0,d-k,k}) + Q_2^{\ge}v_{k}\\
&=Q_1v_{k-1}+Q_1m_{0,d-k,k} + Q_2^{\ge}v_{k}\\
&=(Q_1+Q_2^{\ge})v_{k}.
\end{align*}
A similar calculation shows $b=-(Q_1+Q_2^{\ge})u_{k}$, completing the proof.

\end{proof}
~\\

\subsection{Example 3.}
~\\

Let $\k$ be a field such that ${\rm char \ } \k \neq 2$, and let $$A=\dfrac{\k\la x,y,z\ra}{\la x^2-xy, yx, zx-xz-z^2, zy-yz\ra}.$$ Note that $z\in A_1$ is normal, and $x\in R_1$ is normal, where $$R=A/\la z \ra = \k \la x,y \ra/ \la x^2-xy,yx \ra.$$ Since $R/\la x\ra=\k[y]$, we see that $A$ is Noetherian by \cite[Lemma 1.11(2)]{rogalski}. 

A straightforward (noncommutative) Gr\"obner basis calculation, with $x < y < z$ and left degree-lexicographic order, shows that
$$\{1, x, x^2, xz, zx, x^2z, xzx\}\cup \bigcup_{n=1}^{\infty} \{y^n, y^{n-1}z\}$$
is a $\k$-basis for $A$, and thus the Hilbert series of $A$ is
$$h_A(t)=1+3t+5t^2+4t^3+\dfrac{2t^4}{1-t}.$$

However, $A$ is not a quotient of a quadratic, 3-dimensional AS-regular algebra. If such an algebra, call it $S$, admitted a surjective algebra map $S\to A$, then identifying the generators of $S$ with those of $A$, and recalling that $S$ must be a domain, implies 
$$x^2-xy+\a yx,\ xz-zx+z^2-\b yx,\quad zy-yz-\g yx$$
is a set of defining quadratic relations for $S$. One can check that these relations force $\dim_{\k} S_3=9$, contradicting the fact that the Hilbert series of a quadratic, 3-dimension AS-regular algebra is $(1-t)^{-3}$.


We will compute the Hilbert series of $\mbfB(A)$, from which it will be apparent that the homomorphism $\displaystyle  \t_A: A\to \mbfB(A)$ of Proposition \ref{map A to B} is neither injective nor surjective. We begin by describing the closed points and an open cover of $\U_d=\U_d(A)$ for each $d\ge 2.$  

\begin{lemma}
\label{points of G}
The closed points of the scheme $\U_2$ are
\begin{align*}
(1:0:0)\times (1:1:0),&\quad (0:0:1)\times (1:0:1)\\
(0:1:0)\times (0:1:0),&\quad (1:0:-1)\times (0:0:1),
\end{align*}
and the sets $U_1=D_+(x_0x_1),\ U_2=D_+(z_0x_1),\ U_3=D_+(y_0y_1),\ U_4=D_+(x_0z_1)$ form a disjoint open cover of $\U_2$.

The closed points of the scheme $\U_3$ are
$$(1:0:-1)\times (0:0:1)\times (1:0:1)\quad\text{and}\quad (0:1:0)\times (0:1:0)\times (0:1:0),$$
and the sets $D_+(x_0z_1x_2)$ and $D_+(y_0y_1y_2)$ form a disjoint open cover of $\U_3$.

For $d\ge 4$, $\U_d$ has one closed point, $(0:1:0)^{\times d}$, and the set  $D_+(y_0y_1\cdots y_{d-1})$ is an open cover of $\U_d$ for $d\ge 4$.
\end{lemma}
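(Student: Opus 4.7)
My plan is to use Proposition \ref{alt pres} to realize $\mathbf{U}_d(A)$ as the subalgebra of $S(d)/K(d)$ generated in multidegree $(1,\ldots,1)$, where the multilinearization ideal $K(d)$ (Definition \ref{multilinearization ideal}) is generated by the shifted quadratic relations
$$x_i x_{i+1} - x_i y_{i+1}, \quad y_i x_{i+1}, \quad z_i x_{i+1} - x_i z_{i+1} - z_i z_{i+1}, \quad z_i y_{i+1} - y_i z_{i+1}$$
for $0 \le i \le d-2$. A closed point of $\U_d \cong \Proj \mathbf{U}_d(A)$ then corresponds to a sequence $(p_0, \ldots, p_{d-1}) \in (\P^2)^{\times d}$ satisfying these equations. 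For $d=2$, the relation $y_0 x_1 = 0$ splits the analysis into $b_0 = 0$ versus $a_1 = 0$ (with $p_0 = (a_0:b_0:c_0)$, $p_1 = (a_1:b_1:c_1)$); the remaining three equations then pin down each branch to exactly one of the four points in the statement.

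For $d \ge 3$, I will invoke the projection morphisms $\pi^{(d)}_{i,i+1}: \U_d \to \U_2$ of Section \ref{Tps}: a closed point of $\U_d$ is precisely a chain $(p_0, p_1, \ldots, p_{d-1})$ in which each consecutive pair $(p_i, p_{i+1})$ is one of the four $\U_2$-points. Consulting the $d=2$ list, the only second coordinates that reappear as first coordinates of some $\U_2$-point are $(0:1:0)$, which loops to itself and yields the chain $(0:1:0)^{\times d}$, and $(0:0:1)$, which only extends $(1:0:-1)\times(0:0:1)$ to $(1:0:-1)\times(0:0:1)\times(1:0:1)$; the latter dead-ends because $(1:0:1)$ does not appear as a first coordinate anywhere. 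This accounts for exactly the asserted closed points.

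For the open cover assertions, since $\U_d$ is a projective scheme of finite type over the algebraically closed field $\k$ with only finitely many closed points, each irreducible component must have dimension zero, so $|\U_d|$ is a finite discrete topological space whose points are precisely its closed points. It then suffices to evaluate each relevant generator $f \in \mathbf{U}_d(A)_1$ at each closed point and verify that $f$ is nonvanishing exactly at its designated point; this simultaneously shows that the proposed sets cover $\U_d$ and that $D_+(f) \cap D_+(f') = D_+(ff') = \emptyset$ when $f \neq f'$. The main obstacle is the initial case analysis for $\U_2$, but the forced vanishings $b_0 = 0$ or $a_1 = 0$ keep the branching manageable; the chain-extension bookkeeping for $d \ge 3$ and the zero-dimensional topological formalism then follow routinely.
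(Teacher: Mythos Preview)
Your proposal is correct and follows essentially the same approach as the paper: write down the multilinearized quadratic equations, solve them for closed points, and then use the finite-type-over-$\k$ structure to deduce the open cover statement from knowledge of the closed points. The paper's proof is extremely terse (``verification\ldots is left to the reader''), whereas you supply the actual mechanism---the branching on $y_0x_1=0$ for $d=2$, the chain-extension argument via the projections $\pi^{(d)}_{i,i+1}$ for $d\ge 3$, and the explicit zero-dimensionality argument for the covers---so your write-up is a faithful expansion rather than a different route.
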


\begin{proof}
The closed points of $\U_d$ are the closed points of $(\P^2)^{\times d}$ at which the following forms vanish for all $0\le i\le d-1$:
$$x_ix_{i+1}-x_iy_{i+1},\ y_ix_{i+1},\ z_ix_{i+1}-x_iz_{i+1}-z_iz_{i+1},\ z_iy_{i+1}-y_iz_{i+1}.$$
Verification that the sets described in the Lemma are the solutions to these is left to the reader.

The description of the open covers follows readily, since closed points of open subschemes are closed, for schemes that are locally finite over $\Spec \k$.

\end{proof}


Lemma \ref{points of G} implies that
\begin{align*}
\mbfB(A)_2&={\mathbf U}_2(A)(1)_{(x_0x_1)}\oplus {\mathbf U}_2(A)(1)_{(z_0x_1)}\oplus {\mathbf U}_2(A)(1)_{(y_0y_1)}\oplus {\mathbf U}_2(A)(1)_{(x_0z_1)}\\
\mbfB(A)_3&={\mathbf U}_3(A)(1)_{(x_0z_1x_2)}\oplus {\mathbf U}_3(A)(1)_{(y_0y_1y_2)},\ \text{and}\\
\mbfB(A)_d&={\mathbf U}_d(A)(1)_{(y_0y_1\cdots y_{d-1})},\ \text{for } d\ge 4.
\end{align*}

The dimensions of these local rings can be analyzed via the corresponding localization of the algebra $S/K$, where $S=\Sym(A_1)^{\tsr d}$ and $K$ is the multilinearization of $\la x^2-xy, yx, zx-xz-z^2, zy-yz\ra$ described in Definition \ref{multilinearization ideal}. The following list gives a Gr\"obner basis for the ideal $K$, sorted by multidegree, with respect to degree-reverse-lexicographic order with  $z_{i+1}<y_{i+1}<x_{i+1}<z_i<y_i<x_i$ for $0\le i\le d-2$.

\begin{align*}
\text{For all } d\ge 2&:\\
(1,1)&\quad x_ix_{i+1}-x_iy_{i+1},\ y_ix_{i+1},\ z_ix_{i+1}-x_iz_{i+1}-z_iz_{i+1},\ z_iy_{i+1}-y_iz_{i+1}\\
(1,2)&\quad x_iy_{i+1}z_{i+1}+y_iz_{i+1}^2,\ \\
(2,1)&\quad x_iy_iz_{i+1}+y_iz_iz_{i+1},\ x_i^2z_{i+1}+x_iz_iz_{i+1}+y_iz_iz_{i+1},\ x_iy_iy_{i+1},\ \\
(1,3)&\quad y_iy_{i+1}z_{i+1}^2,\ \\
(2,2)&\quad y_i^2z_{i+1}^2\\
(3,1)&\quad y_i^2z_iz_{i+1},\ \\
\text{If } d\ge 3 \text{ add}&:\\
(1,1,1)&\quad x_iz_{i+1}z_{i+2}+z_iz_{i+1}z_{i+2},\ x_iy_{i+1}y_{i+2},\ y_iz_{i+1}z_{i+2},\\
(1,1,2)&\quad y_iy_{i+1}z_{i+2}^2,\ x_iy_{i+1}z_{i+2}^2,\ \\
(1,2,1)&\quad x_iy_{i+1}^2z_{i+2},\ \\
(2,1,1)&\quad x_i^2y_{i+1}z_{i+2},\ \\
\text{If } d\ge 4 \text{ add}&:\\
(1,1,1,1)&\quad z_iz_{i+1}z_{i+2}z_{i+3},\ 
\end{align*}

\begin{prop}
\label{hs of B}
The Hilbert series of $\mbfB(A)$ is
$$1+3t+6t^2+3t^3+\dfrac{2t^4}{1-t}.$$

\end{prop}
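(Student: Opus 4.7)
The plan is to exploit the disjointness of the open covers in Lemma \ref{points of G}. Since the listed distinguished opens $D_+(f)$ cover $\U_d$ and are pairwise disjoint, I obtain a direct sum decomposition
$$\mbfB(A)_d = H^0(\U_d, \mathscr L_d) = \bigoplus_f H^0(D_+(f), \mathscr L_d),$$
indexed by the generators $f$ from Lemma \ref{points of G}. Via Proposition \ref{alt pres}, ${\mathbf U}_d(A)$ is realized as the subring of $S/K$ generated in multidegree $(1, 1, \ldots, 1)$, so each summand $H^0(D_+(f), \mathscr L_d)$ is naturally identified with the multidegree-$(1, 1, \ldots, 1)$ component of the $\N^d$-graded localization $(S/K)_f$.

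The main computational step is to determine $(S/K)_f$ for each generator $f$ using the explicit Gr\"obner basis for $K$ preceding the statement. For $f = y_0 y_1 \cdots y_{d-1}$ (the sole generator for $d \geq 4$, and one of several for $d = 2, 3$), the relation $y_i x_{i+1} = 0$ forces $x_{i+1} = 0$ for each $i$, the relation $x_0 x_1 - x_0 y_1 = 0$ then forces $x_0 = 0$ (since $y_1$ is a unit), the relation $z_i y_{i+1} - y_i z_{i+1} = 0$ expresses each $z_i$ as a unit multiple of $z_0$, and $z_i x_{i+1} - x_i z_{i+1} - z_i z_{i+1} = 0$ collapses to $z_i z_{i+1} = 0$, yielding $z_0^2 = 0$. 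Consequently $(S/K)_f \cong \k[y_0^{\pm 1}, \ldots, y_{d-1}^{\pm 1}][z_0]/(z_0^2)$, whose multidegree-$(1, \ldots, 1)$ part is $2$-dimensional, spanned by $y_0 \cdots y_{d-1}$ and $z_0 y_1 \cdots y_{d-1}$.

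The remaining generators lie in $d = 2$ or $d = 3$, and I would handle them by similar case-by-case reductions. For $f \in \{x_0 x_1, z_0 x_1\}$ in $d = 2$ and $f = x_0 z_1 x_2$ in $d = 3$, the quadratic relations collapse the localization to a Laurent polynomial ring in two or three variables, each with a $1$-dimensional multidegree-$(1, \ldots, 1)$ part. The subtlest case will be $f = x_0 z_1$ in $d = 2$: the Gr\"obner basis does not immediately eliminate $z_0$, but manipulating the quadratic relations yields $y_1 = x_1$, $y_0 = x_0 + z_0$, and $(x_0 + z_0)^2 = 0$, identifying $(S/K)_f$ with $\k[x_0^{\pm 1}, z_1^{\pm 1}][u]/(u^2)$ where $u = y_0$ has multidegree $(1, 0)$, again with $2$-dimensional multidegree-$(1, 1)$ part. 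Summing yields $\dim \mbfB(A)_3 = 1 + 2 = 3$ and $\dim \mbfB(A)_2 = 1 + 1 + 2 + 2 = 6$. Combined with $\dim \mbfB(A)_0 = 1$ and $\dim \mbfB(A)_1 = 3$ (from Proposition \ref{map A to B}), this assembles into the claimed Hilbert series.

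The hard part will not be the localization reductions themselves, which are mechanical, but verifying that no higher-degree Gr\"obner basis element imposes a relation beyond those already forced by the quadratic ones. For each localization above, this amounts to checking that each listed Gr\"obner basis element in multidegree $(2,1)$, $(1,2)$, $(1,3)$, $(2,2)$, $(3,1)$, and (for $d \geq 3$) the higher multidegree elements, reduces to zero after imposing the substitutions derived from the quadratic relations. This is routine but must be carried out case by case, and it is the essential content-level verification that licenses the structural isomorphisms asserted above.
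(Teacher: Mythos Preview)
Your proposal is correct and follows essentially the same strategy as the paper: both use the disjoint open cover of Lemma~\ref{points of G} to decompose $H^0(\U_d,\mathscr L_d)$ as a direct sum of local pieces, then compute each piece using the Gr\"obner basis for $K$. The only difference is cosmetic: the paper works directly in ${\mathbf U}_d(A)(1)_{(f)}$ by checking which products of the generators $m_{ijk}$ vanish, whereas you pass to the larger ring $(S/K)_f$ and reduce it to a normal form $\k[\text{units}][u]/(u^2)$ or $\k[\text{units}]$ before reading off the multidegree-$(1,\ldots,1)$ part. Your version is arguably cleaner, since the structure of each localization becomes transparent, but the content is identical.
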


\begin{proof} That $\dim_{\k}\mbfB(A)_0 = 1$ and $\dim_{\k}\mbfB(A)_1 = 3$ is an immediate consequence of the construction of $\mbfB(A)$. In higher degrees, we calculate directly, making repeated use of the Gr\"obner basis described above.

We begin in degree two. A minimal generating set for ${\mathbf U}_2(A)$ is
$$x_0x_1=\widetilde{\i}(x^2),\ x_0z_1=\widetilde{\i}(xz),\ y_0y_1=\widetilde{\i}(y^2),\ y_0z_1=\widetilde{\i}(yz),\ z_0x_1=\widetilde{\i}(zx).$$
Note that the following expressions vanish in ${\mathbf U}_2(A)$:
$$(x_0x_1)^2x_0z_1,\ (x_0x_1)y_0y_1,\ (x_0x_1)y_0z_1,\ (x_0x_1)z_0x_1,$$
$$(z_0x_1)x_0x_1,\ (z_0x_1)^2x_0z_1,\ (z_0x_1)y_0y_1,\ (z_0x_1)y_0z_1.$$
It follows that $\dim_{\k} {\mathbf U}_2(A)(1)_{(x_0x_1)}=\dim_{\k} {\mathbf U}_2(A)(1)_{(z_0x_1)}=1$.
Also note that
$$(y_0y_1)x_0x_1,\ (y_0y_1)x_0z_1,\ (y_0y_1)z_0x_1,\ (y_0z_1)^2$$
vanish in ${\mathbf U}_2(A)$, but $(y_0y_1)^k(y_0z_1)$ is nonzero for all $k\ge 0$. This shows that $\dim_{\k} {\mathbf U}_2(A)(1)_{(y_0y_1)}=2.$
Similarly,
$$(x_0z_1)y_0y_1,\ (x_0z_1)(x_0x_1+y_0z_1),\ (x_0z_1)(x_0x_1+z_0x_1),\ (x_0z_1)(x_0x_1)^2$$
vanish in ${\mathbf U}_2(A)$, but $(x_0z_1)^kx_0x_1\neq 0$ for all $k\ge 0$. Thus
$\dim_{\k} {\mathbf U}_2(A)(1)_{(x_0z_1)}=2$. We conclude $\dim_{\k}\mbfB(A)_2=6$.

Next we consider degree three. A minimal generating set for ${\mathbf U}_3(A)$ is
$$x_0x_1z_2=\widetilde{\i}(x^2z),\ x_0z_1x_2=\widetilde{\i}(xzx),\ y_0y_1y_2=\widetilde{\i}(y^3),\ y_0y_1z_2=\widetilde{\i}(y^2z).$$
The following expressions vanish in ${\mathbf U}_3(A)$:
$$(x_0z_1x_2)x_0x_1z_2,\ (x_0z_1x_2)y_0y_1y_2,\ (x_0z_1x_2)y_0y_1z_2$$ 
$$(y_0y_1y_2)x_0x_1z_2,\ (y_0y_1y_2)x_0z_1x_2,\ (y_0y_1z_2)^2.$$
But $(y_0y_1y_2)^ky_0y_1z_2\neq 0$ for all $k\ge 0$. Thus we have $\dim_{\k} {\mathbf U}_3(A)(1)_{(x_0z_1x_2)}=1$ and  $\dim_{\k} {\mathbf U}_3(A)(1)_{(y_0y_1y_2)}=2$, so $\dim_{\k}\mbfB(A)_3=3$.

Finally, let $d\ge 4$. The elements $m_{0,d,0}=\widetilde{\i}(y^d)$ and $m_{0,d-1,1}=\widetilde{\i}(y^{d-1}z)$ generate ${\mathbf U}_d(A)$. We have $$(y_0y_1\cdots y_{d-2}z_{d-1})^2=0\quad\text{and}\quad (y_0y_1\cdots y_{d-1})^k(y_0y_1\cdots y_{d-2}z_{d-1})\neq 0$$ for all $k\ge 0$. Therefore $\dim_{\k}\mbfB(A)_d=\dim_{\k} {\mathbf U}_d(A)(1)_{(y_0y_1\cdots y_{d-1})}=2$ for all  $d\ge 4.$

\end{proof}

Consideration of the Hilbert series of $A$ given above, together with Proposition \ref{hs of B} establishes that $\t_A:A\to \mbfB(A)$ fails to be surjective in degree 2 and fails to be injective in degree 3. Moreover, it follows from the proof of Proposition \ref{hs of B} that $\U_d$ is not reduced for each $d>2$, since certain affine open subschemes of each $\U_d$ contain nonzero nilpotent elements. The interested reader can check that $\ker\ \t_A=\la zxy\ra = {\rm span}_{\k}\{zxy\}$, and $\mbfB(A)/\la \e\ra\cong A/\la zxy\ra$ where $\e\in \mbfB(A)_2$ is not in the image of $\t_A$. So $\t_A: A \to \mbfB(A)$ is an isomorphism in degrees $\geq 4$. 

The reader desiring a simple example where $\t_A$ is not surjective in all higher degrees may wish to consider $A=\k\la x,y\ra/\la x^2, xy\ra$.

\bibliographystyle{plain}
\bibliography{bibliog2}

\begin{thebibliography}{10}

\bibitem{ATVI}
M.~Artin, J.~Tate, and M.~Van~den Bergh.
\newblock Some algebras associated to automorphisms of elliptic curves.
\newblock In {\em The {G}rothendieck {F}estschrift, {V}ol. {I}}, volume~86 of
  {\em Progr. Math.}, pages 33--85. Birkh\"auser Boston, Boston, MA, 1990.

\bibitem{Bourbaki}
Nicolas Bourbaki.
\newblock {\em Algebra {I}. {C}hapters 1--3}.
\newblock Elements of Mathematics (Berlin). Springer-Verlag, Berlin, 1998.
\newblock Translated from the French, Reprint of the 1989 English translation [
  MR0979982 (90d:00002)].

\bibitem{CG3}
Andrew Conner and Peter Goetz.
\newblock Classification, {K}oszulity and {A}rtin--{S}chelter regularity of
  certain graded twisted tensor products.
\newblock {\em J. Noncommut. Geom.}, 15(1):41--78, 2021.

\bibitem{CG1}
Andrew Conner and Peter Goetz.
\newblock Quantum projective planes as certain graded twisted tensor products.
\newblock {\em J. Algebra}, 620:293--343, 2023.

\bibitem{Eisenbud-Reeves-Totaro}
David Eisenbud, Alyson Reeves, and Burt Totaro.
\newblock Initial ideals, {V}eronese subrings, and rates of algebras.
\newblock {\em Adv. Math.}, 109(2):168--187, 1994.

\bibitem{Hart}
Robin Hartshorne.
\newblock {\em Algebraic geometry}, volume No. 52 of {\em Graduate Texts in
  Mathematics}.
\newblock Springer-Verlag, New York-Heidelberg, 1977.

\bibitem{24hours}
Srikanth~B. Iyengar, Graham~J. Leuschke, Anton Leykin, Claudia Miller, Ezra
  Miller, Anurag~K. Singh, and Uli Walther.
\newblock {\em Twenty-four hours of local cohomology}, volume~87 of {\em
  Graduate Studies in Mathematics}.
\newblock American Mathematical Society, Providence, RI, 2007.

\bibitem{rogalski}
Daniel Rogalski.
\newblock Noncommutative projective geometry.
\newblock In {\em Noncommutative algebraic geometry}, volume~64 of {\em Math.
  Sci. Res. Inst. Publ.}, pages 13--70. Cambridge Univ. Press, New York, 2016.

\bibitem{stacks}
The {Stacks Project Authors}.
\newblock \textit{Stacks Project}, 2018.

\bibitem{Walton1}
Chelsea Walton.
\newblock Degenerate {S}klyanin algebras and generalized twisted homogeneous
  coordinate rings.
\newblock {\em J. Algebra}, 322(7):2508--2527, 2009.

\bibitem{Walton2}
Chelsea Walton.
\newblock Corrigendum to ``{D}egenerate {S}klyanin algebras and generalized
  twisted homogeneous coordinate rings'' [{J}. {A}lgebra 322 (7) (2009)
  2508--2527] [mr2553692].
\newblock {\em J. Algebra}, 356:275--282, 2012.

\end{thebibliography}

\end{document}